\NewDocumentCommand{\cyc}{ O{\,} m }
{
    (
    \alec_cycle:nn { #1 } { #2 }
    )
}
\DeclareMathOperator{\Lic}{Li}
\DeclareMathOperator{\GL}{GL}
\DeclareMathOperator{\SL}{SL}
\DeclareMathOperator{\Alt}{Alt}
\DeclareMathOperator{\CR}{cr}
\DeclareMathOperator{\R}{r}
\DeclareMathOperator{\sgn}{sgn}
\newcommand{\GR}{\mathrm{Gr}}
\newcommand{\wI}{\widetilde{I}}
\newcommand{\Li}{\mathcal{L}}
\newcommand{\Sl}{\mathcal{S}}
\newcommand{\II}{I}
\newcommand{\Pid}{D}
\newcommand{\symsix}{\text{\bf Sym}_{36}}
\newcommand{\fiveterm}{\text{\bf V}}
\newcommand{\quadrat}{\mathcal{Q}}
\newcommand{\Cyc}{\text{Cyc}}
\newcommand{\FE}{\text{FE}}
\newcommand{\CA}{\mathcal{A}}
\newcommand{\modsh}{\mathrel{\overset{\shuffle}{=}}}
\newcommand{\modalt}{\mathrel{\overset{\mathrm{Alt}_8}{=}}}
\newcommand{\modaltt}{\mathrel{\overset{\mathrm{Alt}_{10}}{=}}}
\newcommand{\modS}{\mathrel{\overset{\mathcal{S}}{=}}}
\newcommand{\Conf}[2]{{\rm Conf}_{#1}(#2)}
\newcommand{\detv}[1]{\Delta(#1)}
\newcommand{\Mod}[1]{\ (\text{\normalfont mod #1})}
\newcommand{\Aa}{\mathcal{A}}
\newcommand{\ZZ}{\mathbb{Z}}
\newcommand{\QQ}{\mathbb{Q}}
\newcommand{\RR}{\mathbb{R}}
\newcommand{\CC}{\mathbb{C}}
\newcommand{\ol}[1]{\overline{#1}}
\newcommand{\sm}{\smallsetminus}
\renewcommand{\phi}{\varphi}
\def\arXiv#1{arXiv:\href{http://arXiv.org/abs/#1}{\texttt{#1}}}
\newtheorem{theorem}{Theorem}
\newtheorem{lemma}[theorem]{Lemma}
\newtheorem{corollary}[theorem]{Corollary}
\newtheorem{conjecture}[theorem]{Conjecture}
\newtheorem{proposition}[theorem]{Proposition}
\theoremstyle{definition}
\newtheorem{definition}[theorem]{Definition}
\newtheorem{remark}[theorem]{Remark}
\let\@@pmod\pmod
\DeclareRobustCommand{\pmod}{\@ifstar\@pmods\@@pmod}
\def\@pmods#1{\mkern4mu({\operator@font mod}\mkern 6mu#1)}
\title{Explicit formulas for Grassmannian polylogarithms}
\date{\today}
\author[Charlton]{Steven Charlton}
\address{Fachbereich Mathematik (AZ), Universit\"at Hamburg, Bundesstra\textup{\ss}e 55, 20146 Hamburg, Germany}
\email{steven.charlton@uni-hamburg.de}
\author[Gangl]{Herbert Gangl}
\address{Department of Mathematical Sciences, Durham University, Durham DH1 3LE, United Kingdom}
\email{herbert.gangl@durham.ac.uk}
\author[Radchenko]{Danylo Radchenko}
\address{ETH Z{\"u}rich, Mathematics Department, R\"amistrasse 101, 8092 Z\"urich, Switzerland}
\email{danradchenko@gmail.com}
\subjclass[2010]{Primary 11G55; Secondary 33E20, 39B32}
\keywords{Polylogarithms, Grassmannian polylogarithms, algebraic K-theory, Borel regulator, Zagier's Conjecture}
\begin{document}
\begin{abstract}
    We give new explicit formulas for Grassmannian and Aomoto polylogarithms
    in terms of iterated integrals, for arbitrary weight.
    We also explicitly reduce the Grassmannian polylogarithm in weight~$4$
    and in weight~$5$ each to depth~$2$. Furthermore, using this reduction
    in weight~$4$ we obtain an explicit, albeit complicated, form of
    the so-called $4$-ratio, which gives an expression for the Borel class
    in continuous cohomology of $\GL_4(\CC)$ in terms of $\Lic_4$.
\end{abstract}
\maketitle

\section{Introduction}
The classical polylogarithm $\Lic_m$ is an analytic function defined
by the power series
    \[\Lic_m(z)=\sum_{n=1}^{\infty}\frac{z^n}{n^m}\,,\quad|z|<1\,.\]
For $m\ge1$ it extends to a multivalued analytic function on $\CC\sm\{0,1\}$
as can be seen from the differential equation
$\frac{d}{dz}\Lic_m(z) = \frac{1}{z}\Lic_{m-1}(z)$
together with $\Lic_0(z)=\frac{z}{1-z}$. Polylogarithms
appear in many diverse areas of mathematics, from
hyperbolic geometry to number theory, algebraic geometry and algebraic $K$-theory.

An important open problem in the area, and one of our principal
motivations for this paper, is Zagier's Polylogarithm Conjecture about
the connection between classical polylogarithms and special
values of Dedekind zeta functions at positive integers.
Let us briefly recall one of its formulations.
Let~$F$ be a number field of discriminant~$D_F$ with $r_1$
real embeddings and $r_2$ conjugate pairs of complex
embeddings. Recall that the Dedekind zeta function of~$F$
is defined by $\zeta_F(s)=\sum_{\mathfrak{a}}N(\mathfrak{a})^{-s}$, for \( \mathrm{Re}(s) > 1 \),
where the sum is taken over all non-zero ideals $\mathfrak{a}$ in
the ring of integers~$\mathcal{O}_F$, and \( N(\mathfrak{a}) \) denotes the norm of the ideal~\( \mathfrak{a} \).  The sum is absolutely convergent for~$\mathrm{Re}(s)>1$ and extends to a meromorphic function on \( \CC\) with a simple pole at \( s = 1 \).  For $m\ge 2$ we define an integer $d_m = d_m(F)$ by the formula
	\begin{align*}
	d_m=\begin{cases}
	r_2,     &\mbox{if }m\mbox{ is even}\,,\\
	r_1+r_2, &\mbox{if }m\mbox{ is odd} \,.
	\end{cases}
	\end{align*}
(More conceptually, $d_m(F)$ is the order of
vanishing of $\zeta_F(s)$ at $s=1-m$.)  Let us also define a single-valued version of $\Lic_m$
due to Zagier~\cite{Za-conj}:
	\[\Li_m(z)=\mathrm{Re}_m\Big(\sum_{j=0}^{m-1}
	\frac{2^jB_j}{j!}\Lic_{m-j}(z)\log^j|z|\Big)\,,
	\quad m\ge2\,,\]
where $\mathrm{Re}_m(z)$ denotes the real part of~$z$ if~$m$ is odd
and the imaginary part of~$z$ if~$m$ is even, and \( B_j \) denotes the \( j \)-th Bernoulli number.
For~$m=2$ the function~$\Li_2$ is better known as
the Bloch-Wigner dilogarithm~\cite{Bl1}.
The function~$\Li_m$ is real-analytic on $\CC\sm\{0,1\}$
and continuous on $\mathbb{P}^1(\CC)$.
For convenience we extend~$\Li_m$ to a function on $\ZZ[\CC]$
(formal linear combinations of elements in $\CC$) by linearity.
\begin{conjecture}[Zagier] \label{conj:zagier}
    Let $\{\sigma_{j}\}_{j=1,\dots,n}$ be the set of all complex
    embeddings of a number field~$F$, where $n=[F:\QQ]=r_1+2r_2$,
    labeled in such a way that $\sigma_{j}=\overline{\sigma_{r_1+r_2+j}}$,
    $j=1,\dots,r_2$.
    Then there exist elements
    $y_1,\dots,y_{d_m}\in \mathbb{Z}[F^{\times}]$ such that
    \begin{equation*}
    \zeta_F(m)\sim_{\mathbb{Q}^{\times}}|D_F|^{1/2}\cdot
    \pi^{md_{m+1}}\cdot\det\left(
    \Li_m\left(\sigma_i(y_j)\right)_{1\leq i,j\leq d_m}
    \right).
    \end{equation*}
\end{conjecture}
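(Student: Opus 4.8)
The plan is to run the by-now-classical $K$-theoretic reduction of Zagier's conjecture, due in outline to Beilinson, Bloch, Deligne, Zagier and Goncharov, and then to isolate the one step that is not formal. The skeleton has three ingredients. First, \emph{Borel's theorem}: $\zeta_F(m)$ equals, up to a nonzero rational number and the expected factor $|D_F|^{1/2}\pi^{m\,d_{m+1}}$ (the exponent of $\pi$ being forced by the functional equation and the number of archimedean places of $F$), the covolume of the image of $K_{2m-1}(F)$ (equivalently of $K_{2m-1}(\mathcal{O}_F)$, which has the same rationalization) under the Borel regulator $r_m\colon K_{2m-1}(F)\otimes\QQ\to\RR^{d_m}$, and moreover $\dim_\QQ K_{2m-1}(F)\otimes\QQ=d_m$ with $r_m$ injective. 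Second, the single-valued polylogarithm $\Li_m$, extended to $\ZZ[\CC]$ by linearity, kills all functional equations of $\Li_m$ and so descends to the Bloch group $B_m(F)$ (defined via Goncharov's polylogarithmic complex), yielding $\mathrm{reg}_m\colon B_m(F)\otimes\QQ\to\RR^{d_m}$, $\xi\mapsto(\Li_m(\sigma_i\xi))_{i=1,\dots,d_m}$. Third, one builds a comparison homomorphism $c\colon B_m(F)\otimes\QQ\to K_{2m-1}(F)\otimes\QQ$ such that $r_m\circ c$ agrees with $\mathrm{reg}_m$ up to a fixed nonzero rational scalar. Granting these, the conjecture follows from the surjectivity of $c$: choose a $\QQ$-basis of $K_{2m-1}(F)\otimes\QQ$, lift it through $c$ to $y_1,\dots,y_{d_m}\in\ZZ[F^\times]$, and Borel's theorem then forces $\det\big(\Li_m(\sigma_i(y_j))\big)_{1\le i,j\le d_m}$ to be a nonzero rational multiple of $\zeta_F(m)/\big(|D_F|^{1/2}\pi^{m\,d_{m+1}}\big)$.

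The third ingredient --- that $\Li_m$ represents a rational multiple of the Borel class --- is where the results of this paper are meant to enter. The Borel class lives in $H^{2m-1}_{\mathrm{cont}}(\GL_N(\CC);\RR)$ and, after Goncharov, is represented by the Grassmannian polylogarithm; the new explicit formula for the Grassmannian polylogarithm in terms of iterated integrals, its reduction to depth $2$ in weights $4$ and $5$, and the explicit form of the $4$-ratio extracted from the weight-$4$ reduction make this cocycle genuinely computable in terms of $\Lic_4$, hence of $\Li_4$, so that the comparison can be verified directly in weight $4$; in weights $2$ and $3$ one uses the classical Bloch--Wigner and Bloch--Suslin/Goncharov descriptions. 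This is, concretely, what the paper contributes toward Conjecture~\ref{conj:zagier}: it removes the regulator comparison as an obstruction in weight $4$.

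The hard part, and the reason the conjecture is still open, is the surjectivity of $c$: producing, for an \emph{arbitrary} number field $F$, enough explicit elements of $B_m(F)$ to span $K_{2m-1}(F)\otimes\QQ$ --- equivalently, enough $\QQ$-independent combinations of $m$-th polylogarithm values that satisfy every functional equation and are simultaneously detected by the regulator. For $m=2$ this is Bloch--Suslin together with Suslin's rigidity theorem; for $m=3$ it is Goncharov's theorem; for $m=4$ it is the subject of recent work of Goncharov and Rudenko; for $m\ge5$ it is open, and it is the only ingredient not implied by regulator formalism plus the polylogarithm identities assembled here. A realistic writeup therefore proves the conjecture conditionally on (or, in the available range, by invoking) this surjectivity: (a) record Borel's theorem with the precise $\pi$-exponent $m\,d_{m+1}$; (b) use the Grassmannian-polylogarithm and $n$-ratio formulas to establish the comparison $c$; (c) invoke the surjectivity of the Bloch group onto $K_{2m-1}(F)\otimes\QQ$ in whatever weight it is known; (d) assemble (a)--(c) into the determinant identity, checking that the residual scalar is rational. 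The main obstacle is (c), exactly as it has been throughout the history of the problem.
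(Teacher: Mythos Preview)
The statement is a \emph{conjecture}; the paper does not prove it and explicitly says it ``remains open for $m\ge 5$''. There is no proof in the paper to compare your attempt against, and what you have written is not a proof either --- you yourself concede that ``a realistic writeup therefore proves the conjecture conditionally'' on an unproved surjectivity statement.

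Beyond that, your outline misplaces the obstruction relative to the reduction the paper actually invokes. The paper cites Goncharov~\cite[\S2.2]{Go1} and Borel~\cite{Bo2} for the fact that, to prove Conjecture~\ref{conj:zagier} for a given $m$, it suffices to represent the Borel class $b_m^{(m)}$ by a measurable cocycle of the form $\Li_m(\mathcal{Q}(g_1v,\dots,g_{2m}v))$ for some explicit $\mathcal{Q}$ valued in $\ZZ[\mathbb{P}^1]$. Once such a $\mathcal{Q}$ is found, the elements $y_j$ are produced directly by evaluating this cocycle on cycles representing a $\QQ$-basis of $K_{2m-1}(F)$; no separate construction of a map $c\colon B_m(F)\to K_{2m-1}(F)$, and no surjectivity hypothesis on it, enters. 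In your labelling the hard step is therefore (b), not (c), and it is precisely what Corollary~\ref{cor:borel} supplies for $m=4$ (making explicit the $\mathcal{Q}$ whose existence Goncharov--Rudenko had shown non-constructively), and what Conjecture~\ref{conj:gr5del} isolates as the remaining obstacle for $m=5$. Your framing via a Bloch-group-to-$K$-theory comparison map and its surjectivity is closer to the Beilinson--Deligne/de~Jeu picture~\cite{BD,dJ}, which is related but not the cocycle-level reduction the paper follows.
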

In fact, the full statement of Zagier's Conjecture also
gives a precise recipe for the choice of the
elements $y_1,\dots,y_{d_m}$: one has to take $y_i$ to be
elements of the
so-called $m$-th Bloch group~$\mathcal{B}_m(F)$, a certain subquotient of \( \ZZ[F^\times] \).
For a precise definition we refer to~\cite{Za-conj}.  Conjecturally \( \mathcal{B}_m(F) \otimes \QQ \) has dimension \( d_m \) and
is (canonically) isomorphic to \( K_{2m-1}(F) \otimes \QQ \), see \cite{BD,dJ}.  Here \( K_n(F) \) is the \( n \)-th algebraic \( K \)-group of \( F \).

Zagier's Conjecture generalizes the regulator
part of the analytic class number formula
	\[ {\rm Res}_{s=1} \zeta_F(s) =
	\frac{2^{r_1}(2\pi)^{r_2} \cdot h_F\cdot {\rm Reg}_F}
    {w_F\cdot \sqrt{|D_F|}}\,,\]
where~$h_F$ is the class number, ${\rm Reg}_F$ is the classical regulator, and~$w_F$
is the number of roots of unity of~$F$.
For~$m=2$ Conjecture~\ref{conj:zagier} follows from the
results of Bloch~\cite{Bl2} and Suslin~\cite{Su} as well as Beilinson (as laid out in~\cite{Burgos}).
In a slightly weaker form it was also proved by Zagier
in~\cite{Za-hyp}. For~$m=3$ it was proved by Goncharov
in~\cite{Go1}, where, in particular, he also outlined
a general approach towards Zagier's Conjecture for $m>3$.
Recently the conjecture was also settled
in the case~$m=4$ by Goncharov and Rudenko~\cite{Go-Ru}.
The conjecture remains open for $m\ge5$.

Goncharov's strategy for proving Conjecture~\ref{conj:zagier}
relies on a theorem of A.~Borel, which we
briefly recall. In~\cite{Bo1} Borel has defined a regulator
map~$r_m^{B}\colon K_{2m-1}(\CC)\to \RR(m-1)$,
where $\RR(k)\coloneqq (2\pi i)^{k}\RR$, and proved that,
	if $\Sigma_F = {\rm Hom}(F,\CC)$ and
	$\psi$ is defined by the composition
	\begin{center}
	\begin{tikzcd}
		K_{2m-1}(F) \ar[r]
		& \bigoplus_{\sigma\in\Sigma_F}
	 K_{2m-1}(\CC) \ar[r, "{\oplus} r_m^{B}"]
		& \ZZ^{\Sigma_F}\otimes \RR(m-1)\,,
	\end{tikzcd}
	\end{center}
	then~$\psi$ is injective modulo torsion, the image
	of~$\psi$ defines a lattice $\Lambda^m_F$
	in $(\ZZ^{\Sigma_F}\otimes \RR(m-1))^+$ (the superscript $+$ denoting invariants under complex conjugation),
	and its covolume \( \mathrm{covol}(\Lambda_F^m) \) is related to \( \zeta_F(m) \) via
	\[\zeta_F(m) \sim_{\QQ^{\times}}
	\sqrt{|D_F|}\,
	\pi^{md_{m+1}}\,{\rm covol}(\Lambda_F^m)\,.\]
(The stronger version of Zagier's conjecture predicts that the image of \( \mathcal{B}_m(F) \) under \( \Li_m \), evaluated on the suitable complex embeddings, is also a lattice in \( \ZZ^{\Sigma_F}\otimes \RR(m-1) \), and that the two lattices should be commensurable.)

The Borel regulator can be represented by the so-called Borel class~\cite{Bo1} in continuous cohomology~$b_m^{(N)}\in H_{{\mathrm{cts}}}^{2m-1}(\GL_N(\CC),\RR(m-1))$,
for $N\ge m$.  An argument in Goncharov's paper \cite[\S2.2]{Go1}
(see also~\cite{Bo2}) establishes that to prove Zagier's conjecture for \( \zeta_F(m) \),
it is enough to give a formula for this Borel class as a linear combination of $\Li_m$'s.
For $m=2$ such a formula was given by
Bloch~\cite{Bl2} using the Bloch-Wigner dilogarithm~$\Li_2$,
and for $m=3$ Goncharov gave an ingenious formula for the
Borel class using~$\Li_3$.

For~$m\ge4$ Goncharov
has shown in~\cite{Go-arak} that the Borel class~$b_m^{(m)}$
can be expressed in terms of a certain function~$\Li_m^{G}$,
the single-valued \emph{Grassmannian polylogarithm}, defined on
the space of $m$-planes in $\CC^{2m}$.
However, the function~$\Li_m^{G}$ cannot be expressed in terms of
only~$\Li_m$ for $m\ge4$.

In their proof of Conjecture~\ref{conj:zagier}
for $m=4$ Goncharov and Rudenko have overcome this difficulty
by giving a formula for the Borel regulator using
the (multi-valued) Grassmannian polylogarithm~$\GR_4$
from~\cite{Go-grass} (see Section~\ref{sec:grpolylog} below),
and showing the existence of an~$\Li_4$-expression
for a small modification of~$\GR_4$ that represents
the same cohomology class.
More precisely, to prove the existence of the \( \Li_4 \) expression they established part of the conjectural structure of the motivic Lie coalgebra in weight $4$.
 Their proof does not seem to give any practical way
of producing an explicit~$\Li_4$-formula for~$b^{(4)}_4$, though.

Motivated by Goncharov's original work \cite{Go1} in conjunction with \cite{Go-Ru}, 
with a view towards Zagier's Conjecture in weights \( 5 \) and higher, we investigate 
the Grassmannian polylogarithm \( \GR_m \) via explicit formulas in terms of 
classical iterated integrals. This gives rise to numerous explicit formulas which we 
now state. In order to formulate our results it will be convenient to introduce some 
more notation.

\smallskip
\noindent{\textbf{Notation.}}
First, we introduce new coordinates $\rho_i=\rho_i^{(2m-1,2m)}$ on the moduli 
space of configurations of $2m$ points $v_1,\dots,v_{2m}$ in an $m$-dimensional 
vector space, modulo the action of $\GL_m$. Explicitly, 
$\rho_i$ is the ratio
$\frac{\detv{i,i+1,\dots,i+m-2,2m-1}}{\detv{i,i+1,\dots,i+m-2,2m}}$,
where $\detv{i_1,\dots,i_m}$ denotes the determinant of the $m{\times}m$-matrix
with columns $v_{i_1},\dots,v_{i_m}$.

Second, following~\cite{Go-Ru}, given $2n$ points $x_1,\dots,x_{2n}\in\mathbb{P}^1$ we will denote by $(i_1i_2\dots i_{2n})_x$ the cyclic ratio
\[(i_1i_2\dots i_{2n})_x \coloneqq 
(-1)^n\,\frac{{}_{\phantom{n}}x_{i_{1}}-x_{i_{2}}}{x_{i_{2n}}-x_{i_1}}\, 
    \frac{x_{i_3}-x_{i_4}}{x_{i_2}-x_{i_3}}\cdots 
    \frac{x_{i_{2n-1}}-x_{i_{2n\phantom{-1}}}}{x_{i_{2n-2}}-x_{i_{2n-1}}}\,.\] 

Finally, it will also be 
convenient to introduce various symmetrization operators: 
by $\Alt_m f(x_1,\dots,x_m)$ we denote the skew-symmetrization of $f$ over all 
permutations of the $x_i$; by $\Alt_{m,m} f(x_1,\dots,x_m,x_{m+1},\dots,x_{2m})$
the skew-symmetrization of $f$ taken over all permutations of indices in 
$\mathfrak{S}_{\{1,\dots,m\}}\times\mathfrak{S}_{\{m+1,\dots,2m\}}$; and by
$\Cyc_{2m}^{\varepsilon}\,f(x_1,\dots,x_{2m})$ we denote the cyclic linear 
combination 
$\sum_{i\Mod{2m}}\varepsilon^{i}f(x_{i+1},\dots,x_{i+2m})$ with indices written 
modulo~$2m$.

\smallskip 
\paragraph{{\bf Main results.}}\nobreak
\begin{enumerate} 
\item[1)] We give a new and concise formula for
Grassmannian polylogarithms $\GR_m=\GR_m(v_1,\dots,v_{2m})$ in arbitrary weight $m$ in terms of iterated integrals
up to depth $m$ (Theorem~\ref{thm:i211}) as
    \begin{equation*} 
    -{2m\choose 2}\GR_m \;=\; m!^2
    \Alt_{2m}\II(0;0,\rho_1,\rho_2,\dots,\rho_{m-1};\rho_m)\,,
    \end{equation*}
    where $\II$ is defined in Section~\ref{sec:prelim} below, and equality is to be
    understood on the level of symbols (see Section~\ref{sec:modprod}).
\item[2)] 
In weight~4 we give an expression for $\GR_4$ using only depth $\le2$ iterated 
integrals (Theorem~\ref{thm:gr4toi31_2}). Specifically, modulo products the identity 
can be written as 
	\[
	\frac{7}{72}\,\GR_4 \;=\;  \Alt_8 \,
	C_4(\rho_1, \rho_2, \rho_3, \rho_4, \infty, 0)\,,
	\]
where, for general weight~$n$, $C_n$ is the depth~$2$ function of $6$ points in 
$\mathbb{P}^1$, defined by
    \[
    C_n(x_1,\dots,x_6) = \Cyc_6^{(-1)^{n-1}}\Big( 
    I_{n-1,1} 
    \big((1234)_x,(4561)_x\big) - (-1)^n \tfrac{2(n-1)}{3}\Lic_{n}((123456)_x)\Big)\,.
    \]
    (For the definition of $\II_{n,1}$ see~\eqref{eq:Indef} below.)

\item[3)] Building on 2) we obtain an explicit, albeit
complicated, formula for the elusive 
quadruple ratio by expressing a non-zero rational multiple of the Borel
class~$b_4^{(4)}$, given by a slight modification of $\GR_4$, in terms of~$\Li_4$
(Theorem~\ref{thm:gr4toli4} and Corollary~\ref{cor:borel}).
\item[4)]  In weight~5 we find a similar expression to the one in 2), also in terms 
of depth 2 iterated integrals (Theorem~\ref{thm:gr5toi41} ), namely, modulo products 
we have
	\[
	\frac{1}{320}\,\GR_5 \;=\;  \Alt_{10} \Big[
    C_5(\infty,\rho_1,\rho_2,\rho_3,\rho_4,\rho_5)
    -C_5(\infty, 0, \rho_2,\rho_3,\rho_4,\rho_5)
	\Big]\,.
	\]
\item[5)] Building on 4) we express in Conjecture~\ref{conj:gr5del}
a non-zero rational multiple of the Borel
class~$b_5^{(5)}$, given by a slight modification of $\GR_5$, as a linear
combination of expressions of the form $\II_{4,1}^+(\FE_2,\cdot)$ and 
$\II_{4,1}^+(\cdot, \FE_3)$, where
$\FE_k$ can be any functional equation for $\Li_k$ ($k=2,3$) and $\II_{4,1}^{+}$ is a 
symmetrized version of $\II_{4,1}$, defined in~\eqref{eq:i41plusdef} below.

This constitutes a substantial stepping stone towards Zagier's Polylogarithm Conjecture in weight~5
as, assuming the conjectural structure of the motivic Lie coalgebra in weight~5, each 
such expression can be written in terms of $\Li_5$ only.
\item[6)] Finally, following a suggestion of Daniil Rudenko, we obtain via 1) a 
formula for the Aomoto polylogarithm, defined on pairs of simplices and subject to 
scissors congruence relations
and further symmetries (see~\cite{Go-grass} and Section~\ref{sec:grpolylog} below), 
in terms of iterated integrals as well, again for arbitrary weight.
Moreover, we find a concise form for it in  Theorem~\ref{thm:aomoto},
yet again using the $\rho$-coordinates introduced above, as
 \begin{equation*}
	-m^2 \,\CA_{m-1}(v_1,\dots,v_m;v_{m+1},\dots,v_{2m}) \;=\; 
	 \Alt_{m,m} 
	 \II(0;\rho_{m+1}^{(1,2m)},\rho_{m}^{(1,2m)},\dots,\rho_{3}^{(1,2m)};\rho_{2}^{(1,2m)})\,.
   \end{equation*}
\end{enumerate}

\smallskip
\noindent{\textbf{Acknowledgements.}}
This work was initiated during the Trimester Program
\emph{Periods in Number Theory, Algebraic Geometry and Physics}
at the Hausdorff Research Institute for Mathematics in Bonn.
We are grateful to this institution, as well as to the
Max Planck Institute for Mathematics in Bonn, for their
hospitality, support and excellent working conditions.  SC was partially supported by DFG Eigene Stelle grant CH 2561/1-1, for Projektnummer 442093436.
Many thanks to Daniil Rudenko who suggested, following the original 
preprint, that our analysis should also allow us to write the Aomoto
polylogarithm as an iterated integral.

\medskip
\section{Preliminaries}
\label{sec:prelim}
We briefly recall some of the motivic framework of multiple
polylogarithms and iterated integrals from Goncharov's paper~\cite{Go-galois},
in particular their Hopf algebra structure and $\otimes$-symbols
of iterated integrals.

\subsection{Iterated integrals}
Recall the definition of the iterated integral function
    \[ \II(x_0; x_1, \ldots, x_N; x_{N+1}) =
    \int_{x_0 < t_1 < \cdots < t_N < x_{N+1}}
    \frac{dt_1}{t_1 - x_1}
    \wedge \frac{dt_2}{t_2 - x_2}
    \wedge \cdots \wedge
    \frac{dt_N}{t_N - x_N} \,.\]
As per standard notation, we put
    \begin{equation} \label{eq:Indef}
    \II_{n_1,\dots,n_d}(x_1,\dots,x_d)=\II(0;x_1,\{0\}^{n_1-1},\dots,x_d,\{0\}^{n_d-1};1)\,,
    \end{equation}
and $\{a\}^n$ is~$a$ repeated~$n$ times.
These functions are related to the multiple polylogarithms
    \[ \Lic_{n_1,\dots,n_d}(z_1,\dots,z_d)
    = \sum_{0<k_1<\dots<k_d}\frac{z_1^{k_1}\cdots z_d^{k_d}}
    {k_1^{n_1}\cdots k_d^{n_d}}\]
by the formula
	\begin{equation*}
	\II_{n_1,\dots,n_d}(0;
	(a_1\dots a_d)^{-1},
	(a_2\dots a_d)^{-1},\dots,
	a_d^{-1};1)
	= (-1)^d\Lic_{n_1,\dots,n_d}(a_1,a_2,\dots,a_d)\,.
	\end{equation*}

\subsection{Motivic iterated integrals}
In \cite{Go-galois}, the iterated integrals $\II(x_0; \ldots; x_{N+1})$,
\( x_i \in \overline{\QQ} \), are upgraded to framed mixed Tate motives, to
define motivic iterated integrals \( I^{\mathscr{M}}(x_0; \ldots; x_{N+1})
\) living in a graded (by the weight~$N$) connected Hopf algebra
\( \mathcal{A}_\bullet = \mathcal{A}_\bullet(\ol{\QQ}) \).
The Hopf algebra $\mathcal{A}_{\bullet}$ is the ring of regular functions on
the unipotent part of the motivic Galois group.
(Note that in Brown's motivic framework,~\cite{Br}, the motivic iterated
integrals $\II^{\mathscr{M}}$ are denoted by~$\II^{\mathfrak{u}}$.)
The coproduct \( \Delta \) on this Hopf
algebra is computed via Theorem 1.2 in \cite{Go-galois} as
    \begin{equation} \label{eq:coprod}
    \begin{split}
    & \Delta I^{\mathscr{M}}(x_0; x_1,\ldots,x_N; x_{N+1})\\
    & = \sum_{0 = i_0 < i_1 < \cdots < i_{k} < i_{k+1} = N+1}
    I^{\mathscr{M}}(x_0; x_{i_1}, \ldots, x_{i_k}; x_{N+1}) \otimes
    \prod_{p=0}^{k} I^{\mathscr{M}}(x_{i_p}; x_{i_{p}+1}, \ldots, x_{i_{p+1}-1};
    x_{i_{p+1}}) \,.
    \end{split}
    \end{equation}
Here $I^{\mathscr{M}}(a;b;c)$ is regularized as
    \[
    I^{\mathscr{M}}(a;b;c) = \begin{cases}
    \log^{\mathscr{M}}(1) = 0 & \text{if $ a = b $ and $ b = c $\,,} \\
    \log^{\mathscr{M}}(\tfrac{1}{b - a})  & \text{if $ a \neq b $ and $ b = c $\,,} \\
    \log^{\mathscr{M}}(b - c)  & \text{if $ a = b $ and $ b \neq c $\,,} \\
    \log^{\mathscr{M}}(\tfrac{b - c}{b - a})  & \text{otherwise\,.}
    \end{cases}
    \]

Similarly, if $x_i\in\CC$ we follow Goncharov and consider
$\II^{\mathcal{C}}(x_0;\dots;x_{m+1})$ as framed Hodge-Tate structures
(see~\cite[\S3.2 (ii), p.~232]{Go-galois}), where the coproduct for the
corresponding Hopf algebra of framed objects is given by the same
formula~\eqref{eq:coprod}, with~${}^{\mathscr{M}}$ replaced
by~${}^{\mathcal{C}}$ (see~\cite[Thm.~3.4]{Go-galois}).
Since our results ultimately only use~\eqref{eq:coprod},
they apply in any of these two setups.  We therefore adopt the following convention.\smallskip
\par\noindent {\em Convention.} We will omit the superscripts from
the notation, and simply
write $\II(x_0;x_1,\dots,x_m;x_{m+1})$.

\subsection{\texorpdfstring{The $\otimes$-symbol modulo products and the Lie coalgebra}{The tensor-symbol modulo products and the Lie coalgebra}}
\label{sec:modprod}
Recall from \cite[\S4.4]{Go-galois} the ``$\otimes^m$-invariant'',
or~\emph{symbol}, of an iterated integral.
The symbol $\Sl(\II^{\mathscr{M}})\in\mathcal{A}_1^{\otimes N}$
is an algebraic invariant of~$\II^{\mathscr{M}}$ that respects functional
equations. More precisely, $\Sl\colon (\Aa_{\bullet},\shuffle, \Delta) \to (T(\Aa_1),
\shuffle, \Delta_{\mathrm{dec}})$ is a map of graded Hopf algebras,
where $T(\Aa_1)$ is the tensor algebra of $\Aa_1$, and $\Delta_{\mathrm{dec}}$
is the deconcatenation coproduct on tensors. It is obtained by iterating the reduced
coproduct $\Delta'$ exactly $N-1$ times. A corollary of the definition is the
following recursive formula for the symbol that we will use repeatedly
	\begin{align*}
	\Sl I(x_0; \ldots; x_{N+1})
	= \sum_{j=1}^N \Sl I(x_0; x_1, \dots, \widehat{x_j}, \dots, x_N; x_{N+1})
	 \otimes I(x_{j-1}; x_j; x_{j+1})\,.
	\end{align*}

Recall also the projectors~$\Pi_\bullet$ from~\cite[\S5.5]{Du-Ga-Rh} which
annihilate the symbols of products.
The projector $D_N = N\Pi_N$ acts on length~$N$ tensors as follows.
	\[
	\Pid_N(a_1\otimes\dots\otimes a_N)
	=\Pid_{N-1}(a_1\otimes\dots\otimes a_{N-1})
	\otimes a_N
	-\Pid_{N-1}(a_2\otimes\dots\otimes a_{N})
	\otimes a_1
	\,.\]
We prefer $D_N$ to $\Pi_N$ in order to avoid unnecessary scaling factors, at
the expense of that operator no longer being idempotent. Note that the map
$D_N$ is the classical Dynkin operator in the theory of Hopf algebras (see, e.g.,~\cite[Section~4]{EFGBP}).

We call the composition
$\Pid_N \circ \Sl \eqqcolon \Sl^\shuffle\colon\Aa_{\bullet}\to T(\Aa_1)$ the
\emph{mod-products symbol}. As usual with symbols, we will drop the
$\log^{\mathscr{M}}$ from
the notation and write tensors multiplicatively. The notation might suggest
that the symbol entries are elements $x\in \ol{\QQ}$, when they are
really elements of $\ol{\QQ}^{\times} \otimes_{\ZZ}\QQ \cong \Aa_1$.
In particular, on the level of the symbol 2-torsion vanishes,
because in the Hopf algebra $\Aa_{\bullet}$ (as $(2\pi i)^{\mathscr{M}}$ is
zero) one has the exact equality of motivic logarithms $\log^{\mathscr{M}}(x) = \log^{\mathscr{M}}(-x)$. We can therefore ignore signs in the tensor entries, and freely interchange between $\otimes (-x)$ and $\otimes x$.
To emphasize that certain identities hold only on the level of symbol
or on the level of mod-products we shall write $f\modS g$ and $f\modsh g$ to
denote $\Sl(f)=\Sl(g)$ and $\Sl^{\shuffle}(f)=\Sl^{\shuffle}(g)$, respectively.

To give an example, $\Sl^{\shuffle}$
for classical polylogarithms is given
by~$\Sl^{\shuffle}\Lic_m(x) = (x\wedge(1-x))\otimes x^{\otimes (m-2)}$, where we
write $a\wedge b = a\otimes b - b\otimes a$.
An important property of the single-valued
polylogarithms~$\Li_m$ is that if the $f_j$ are rational
functions and~$\Sl^{\shuffle}\sum \nu_j \Lic_m(f_j(x)) = 0$,
then $\sum_{j}\nu_j\Li_m(f_j(x))$ is constant
(see~\cite[Prop.~1,~p.~411]{Za-conj}).

Finally, recall that the coproduct in a Hopf algebra induces a
cobracket $\delta = \Delta - \Delta^{\mathrm{op}}$ (with $\Delta^{\mathrm{op}}$
the opposite coproduct) on the Lie coalgebra of irreducibles
    \(
    \mathcal{L}_\bullet \coloneqq \Aa_{>0} / \Aa_{>0}^2
    \).
The $2$-part of this cobracket in weight~$m$, i.e. the composition of
$\delta$ with projection to~$\bigoplus_{k=2}^{m-2} \mathcal{L}_{k} \wedge \mathcal{L}_{m-k}$, can be seen to annihilate all classical polylogarithms,
and conjecturally this is the only obstruction, see Conjecture 1.20 and
Section 1.6 in~\cite{Go-ams}. We use the vanishing of the $2$-part
of $\delta$ as a guiding principle for possible depth reduction of
the weight~$5$ Grassmannian polylogarithm in Section~\ref{sec:wt5}.

\medskip
\section{Grassmannian and Aomoto polylogarithms}
\label{sec:grpolylog}
There are several different constructions of
``Grassmannian polylogarithms'' in the literature: there is a real-valued
Grassmannian logarithm of Gelfand and MacPherson~\cite{GM},
Grassmannian $m$-logarithms constructed by Hanamura
and MacPherson~\cite{HM1},~\cite{HM2},
and Goncharov's construction of real-analytic
single-valued Grassmannian polylogarithms~\cite{Go-arak}.
The subject of our investigations is Goncharov's complex analytic multi-valued
Grassmannian polylogarithm, defined in~\cite{Go-grass}.
We refer to it as \emph{the} Grassmannian polylogarithm throughout this paper.

For $m,n\ge 1$, let $\Conf{n}{m}$ be the space of all $n$-tuples of vectors
$(v_1,\dots,v_n)$ in general position in~$\CC^m$ modulo the diagonal
action of $\GL_m(\CC)$. Let us denote by $\detv{i_1,\dots,i_m}$ the
determinant of the $m\times m$ matrix with columns $v_{i_1},\dots, v_{i_m}$
(for better readability we will often omit the commas and simply
write~$\detv{i_1\dots i_m}$). The functions $\detv{i_1\dots i_m}$ are
invariant under the action of~$\SL_m(\CC)$ and the ring of regular
functions $\mathcal{O}(\Conf{n}{m})$ is generated by all possible
ratios of determinants $\detv{i_1\dots i_m}/\detv{j_1\dots j_m}$.

First, following~\cite{Go-grass}, we recall the definition and basic properties of Aomoto polylogarithms. The Aomoto $n$-logarithm is a complex analytic
function defined on admissible pairs of $n$-simplices in $\mathbb{P}^{n}(\CC)$. Here a simplex in $\mathbb{P}^{n}(\CC)$ is simply a collection $(l_0,\dots,l_n)$ of $(n+1)$ hyperplanes. We call a simplex $L=(l_0,\dots,l_n)$ non-degenerate if its hyperplanes are in general position. A pair of simplices
$L=(l_0,\dots,l_n)$ and $M=(m_0,\dots,m_n)$ is called admissible, if $L$ and $M$ are non-degenerate and if $L$ and $M$ share no faces of the same dimension. To any non-degenerate simplex $L$ one associates an $n$-form $\omega_L$ in $\mathbb{P}^{n}(\CC)$ having simple poles at the hyperplanes $l_i$,
namely, $\omega_L = d\log(z_1/z_0)\wedge \dots \wedge d\log(z_n/z_0)$,
where $z_i=0$ denotes the homogeneous equation of the hyperplane $l_i$. Next, to any 
non-degenerate simplex $M=(m_0,\dots,m_n)$ one associates a topological $n$-cycle 
$\Delta_M$ representing the generator of the rank one (relative) homology group 
$H_n(\mathbb{P}^n(\CC), m_0\cup \dots\cup m_n)$. The Aomoto $n$-logarithm is then 
defined as
	\[\mathcal{A}_n(L;M) = \int_{\Delta_M}\omega_L\,.\]
For our purposes it is convenient to dualize and instead view $\mathcal{A}_n$
as a function of pairs of $(n+1)$-tuples of points in $\mathbb{P}^n(\CC)$, which we 
will also denote by $l_i$ and $m_j$.
To give the simplest example, for $n=1$
    \[\mathcal{A}_1(l_0,l_1;m_0,m_1) = \log \CR(l_0,l_1,m_0,m_1)\,,\]
where $\CR(a,b,c,d)=\frac{(a-c)(b-d)}{(a-d)(b-c)}$ is the classical cross-ratio.
The function $\mathcal{A}_n$ is skew-symmetric in both sets of points,
projectively invariant, and most importantly satisfies the additivity relation
	\[\sum_{i=0}^{n+1}(-1)^i\mathcal{A}_n((l_0,\dots,\widehat{l_i},\dots,l_{n+1});(m_0,\dots,m_n))
	 = 0\]
for any configuration $(l_0,\dots,l_{n+1})$ in $\mathbb{P}^n(\CC)$. It also satisfies 
analogous additivity relation in the second variable~$M$ as well as dualized 
versions of additivity~\cite[\S2.1]{Go-grass}. Abstracting away these properties one 
considers scissors congruence groups $A_n(F)$, abelian groups
generated by the elements $\langle l_0,\dots,l_n;m_0,\dots,m_n\rangle_{A_n}$ 
with $l_i,m_j\in\mathbb{P}^n(F)$, subject to the above-mentioned skew-symmetry, projective invariance, and additivity properties. The graded sum $\bigoplus_{n\ge0}A_n(F)$ also has a graded coassociative coalgebra structure,
of which we will only need the component $\nu_{n-1,1}$ of the coproduct 
(see~\cite[eq.~(7)]{Go-grass}) that for generic simplices can be expressed in the 
following form
	\[\nu_{n-1,1}(\langle L;M\rangle_{A_n})
	= -\sum_{i,j=0}^{n}(-1)^{i+j}\langle l_i| l_0,\dots,\widehat{l_i},\dots,l_n;m_0,\dots,\widehat{m_j},\dots,m_n\rangle_{A_n}\otimes
	\Delta(l_i,m_0,\dots,\widehat{m_j},\dots,m_n)\,.\]
As per general setup explained in~\S\ref{sec:modprod}, iterating the $(n-1,1)$
part of the coproduct using the above formula we obtain the tensor symbol of 
the Aomoto polylogarithms.
\begin{proposition}\label{prop:aomoto_symbol}
The Aomoto polylogarithm $\mathcal{A}_n(v_1,\dots,v_{n+1};v_{n+2},\dots,v_{2n+2})$ has $\otimes$-symbol
	\[(-1)^n\Alt_{n+1,n+1}\Delta(2,\dots,n+2)\otimes \Delta(3,\dots,n+3)\otimes
	\dots\otimes \Delta(n+1,\dots,2n+1)\,.\]
\end{proposition}

In~\cite{Go-grass} Goncharov has defined the Grassmannian
$m$-logarithm $\GR_m(v_1,\dots,v_{2m})$ as a multivalued analytic function
on $\Conf{2m}{m}$ by requiring that
$\GR_m(v_1,\dots,v_{2m})=\Alt_{2m}F(v_1,\dots,v_{2m})$,
where, as in the introduction, $\Alt_{n}$ denotes the skew-symmetrization operator
	\[\Alt_n f(x_1,\dots,x_{n}) =
	\sum_{\sigma\in\mathfrak{S}_n}\sgn(\sigma)
	f(x_{\sigma(1)},\dots,x_{\sigma(n)})\]
and the function~$F$ is a primitive of the following $1$-form
    \begin{equation} \label{eq:gdef}
    \Alt_{2m}\; \Big(\mathcal{A}_{m-1}(v_1,\dots,v_m;v_{m+1},\dots,v_{2m})
    \,d\log\detv{m+1,\dots,2m}\Big)\,,
    \end{equation}
where $\mathcal{A}_{m-1}$ is the Aomoto polylogarithm
(see~\cite[\S1.1]{Go-grass}).
Goncharov has proved (loc.~cit.) that $\GR_m$ is well-defined, i.e., that the
$1$-form on the right-hand side of~\eqref{eq:gdef} is indeed closed,
and that it is projectively invariant,
i.e., $\GR_m(\lambda_1v_1,\dots,\lambda_{2m}v_{2m})=\GR_m(v_1,\dots,v_{2m})$
for all $\lambda_1,\dots,\lambda_{2m}\in\CC^{\times}$.
In particular,~$\GR_m$ is a well-defined function on the space
of configurations of~$2m$ points in~$\mathbb{P}^{m-1}(\CC)$.
Note that $\GR_m(v_1,\dots,v_{2m})$ is manifestly skew-symmetric under
the permutations of $v_1,\dots,v_{2m}$. The key property
of the Grassmannian polylogarithm is that it satisfies the
following functional equations.
\begin{proposition}[\cite{Go-grass}] \label{prop:grassfeq}
    \begin{enumerate}
    	\item[(i)] For any generic configuration of $(2m+1)$ vectors
    $v_0,\dots,v_{2m}$ in~$\CC^m$ we have
    \begin{equation} \label{eq:grassfeq1}
    \sum_{i=0}^{2m}(-1)^i\GR_m(v_0,\dots,\widehat{v_i},\dots,v_{2m}) =
    \mathrm{const}\,.
    \end{equation}
    \item[(ii)] For any generic configuration of $(2m+1)$ vectors
    $w_0,\dots,w_{2m}$ in $\CC^{m+1}$ we have
    \begin{equation}\label{eq:grassfeq2}
    \sum_{i=0}^{2m}(-1)^i\GR_m(\pi_i(w_0),\dots,\widehat{w_i},\dots,\pi_i(w_{2m}))
     = \mathrm{const}\,,
    \end{equation}
    where $\pi_i$ denotes the canonical projection from $\CC^{m+1}$
    to $\CC^{m+1}/\langle w_i\rangle$.
    \end{enumerate}
\end{proposition}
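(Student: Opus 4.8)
The plan is to reduce both identities to the vanishing of an explicit $1$-form, and then to deduce that vanishing from functional equations satisfied by the Aomoto polylogarithm $\mathcal{A}_{m-1}$, which has weight $m-1$. The starting point is that $\GR_m$ is, by construction, determined only up to an additive constant — it is $\Alt_{2m}$ of a primitive of the form in \eqref{eq:gdef} — so the assertion ``$\sum_i(-1)^i\GR_m(\cdots)=\mathrm{const}$'' is equivalent to the assertion that the total differential of the left-hand side vanishes on the space of configurations of the $2m+1$ vectors. That space is irreducible, hence connected, so local constancy forces global constancy. (The $\GR_m$ are multivalued, but their monodromy is by periods of weight $<m$, and one checks these contribute nothing to the differential.)

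For~(i), differentiating termwise and inserting \eqref{eq:gdef} gives
\begin{multline*}
d\Big(\sum_{i=0}^{2m}(-1)^i\GR_m(v_0,\dots,\widehat{v_i},\dots,v_{2m})\Big)\\
=\sum_{i=0}^{2m}(-1)^i\,\Alt_{2m}\Big(\mathcal{A}_{m-1}(v_{k_1},\dots,v_{k_m};v_{k_{m+1}},\dots,v_{k_{2m}})\,d\log\detv{k_{m+1}\dots k_{2m}}\Big)\,,
\end{multline*}
where for each $i$ the symbol $\Alt_{2m}$ now skew-symmetrises over the $2m$ vectors $v_j$ with $j\neq i$. Expanding the alternations and collecting terms according to the unordered set $J$ of the $m$ indices occurring inside the determinant, the coefficient of each $d\log\detv{J}$ becomes — up to a global sign and a skew-symmetrisation in the entries of $J$ — an alternating sum of Aomoto polylogarithms $\mathcal{A}_{m-1}(\,\cdot\mid v_J)$, taken over the $m+1$ ways of deleting one vector from the remaining $(m+1)$-element index set and feeding the other $m$ into the first simplex. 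This alternating sum is precisely the ``add a vertex'' functional equation of $\mathcal{A}_{m-1}$, and it vanishes for generic configurations, at worst up to products of lower-weight polylogarithms; the latter produce wedges of $d\log$'s that are too symmetric to survive the outstanding skew-symmetrisation against $d\log\detv{J}$. Hence the differential is zero. As a consistency check, for $m=2$ the function $\mathcal{A}_1$ is a logarithm of a cross-ratio, the required relation is the cocycle identity for logarithms, and \eqref{eq:grassfeq1} specializes to the five-term relation for the Bloch-Wigner dilogarithm.

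Part~(ii) follows by the same mechanism once one notes that for the projected vectors one has $d\log\det\big(\pi_i(w_{j_1}),\dots,\pi_i(w_{j_m})\big)=d\log\detv{i\,j_1\dots j_m}$, the full $(m{+}1)\times(m{+}1)$ determinant in $\CC^{m+1}$; regrouping the differential as before reduces it to an alternating sum of Aomoto polylogarithms, now controlled by the functional equation describing how $\mathcal{A}_{m-1}$ transforms under projection away from a point, which is the projective dual of the relation used in~(i). Alternatively, one can obtain both identities at once inside Goncharov's bi-Grassmannian/Hopf-algebra formalism, where \eqref{eq:grassfeq1} and \eqref{eq:grassfeq2} become the two faces of a single coboundary relation; this is the route taken in~\cite{Go-grass}.

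The step I expect to be the main obstacle is the Aomoto input: one needs both functional equations of $\mathcal{A}_{m-1}$ — the ``add a vertex'' and the ``project from a point'' relations, which are the higher-weight analogues of the five-term relation and are proved by realising $\mathcal{A}_{m-1}$ as a period of a pair of generic simplices and analysing its degenerations — together with the somewhat delicate combinatorics needed to match the nested $\Alt$-operators in \eqref{eq:gdef} against the alternations in those relations, and to confirm that the lower-weight product terms really drop out.
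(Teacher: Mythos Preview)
Your approach is correct in outline and is essentially the route taken in Goncharov's original paper~\cite{Go-grass}: differentiate using the defining $1$-form~\eqref{eq:gdef}, regroup by the factors $d\log\detv{J}$, and recognise each coefficient as an instance of the standard functional equations for the Aomoto polylogarithm~$\mathcal{A}_{m-1}$ (additivity in the first simplex for~(i), compatibility with projection for~(ii)). The loose ends you flag---the bookkeeping of nested alternations and the fate of the lower-weight product terms---are real but manageable. The present paper, however, does not argue this way: it cites~\cite{Go-grass} for the statement and observes that both identities follow immediately from the symbol formula~\eqref{grsymbol}. The key point is that the elementary tensor inside $\Alt_{2m}$ in~\eqref{grsymbol} involves only $v_1,\dots,v_{2m-1}$, so $\mathcal{S}(\GR_m)=\Alt_{2m}f(v_1,\dots,v_{2m-1})$ for some $f$ not depending on $v_{2m}$; the symbol of the left side of~\eqref{eq:grassfeq1} is then (a multiple of) $\Alt_{2m+1}f$, which vanishes since $f$ is fixed by the transposition of the two indices it does not see. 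Part~(ii) follows the same way after rewriting projected determinants as $(m{+}1)\times(m{+}1)$ determinants in~$\CC^{m+1}$, as you already note. So the two arguments differ in what they take as input: you work from the definition of~$\GR_m$ and the Aomoto functional equations, whereas the paper grants the symbol formula~\eqref{grsymbol} (itself a theorem from~\cite{Go-grass}) and reduces everything to a one-line counting argument---the same mechanism it later invokes to deduce the $\Li_4$ functional equation from Theorem~\ref{thm:gr4toli4}.
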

These identities follow from the following expression for
the symbol of $\GR_m$ (\cite[Thm.~4.2]{Go-grass})
    \begin{equation} \label{grsymbol}
    \Sl(\GR_m)
    = 2(-1)^m(m!)^2
    \Alt_{2m} \detv{1,\dots,m} \otimes \detv{2,\dots,m+1} \otimes
    \dots \otimes \detv{m,\dots,2m-1}\,.
    \end{equation}

Our first result is an explicit formula that expresses $\GR_m(v_1,\dots,v_{2m})$ in
terms of the classical iterated integrals. To state this and other formulas,
for a set $I = \{i_1,\dots,i_{m-1}\}\subset \{1,\dots,2m\}$ we define
    \[\rho_{I}^{(k,l)}
    \;\coloneqq\; \frac{\detv{i_1,i_2,\dots,i_{m-1},k}}
    {\detv{i_1,i_2,\dots,i_{m-1},l}}\,,\qquad
    I\cap \{k,l\} = \varnothing\,.\]
Note that $\rho_{I}^{(k,l)}$ is a rational function that is
symmetric and projectively invariant in $v_{i_1},\dots,v_{i_{m-1}}$.
Furthermore, let us set $\rho_i^{(k,l)}:=\rho_{\{i,\dots,i+m-2\}}^{(k,l)}$
and $\rho_i \coloneqq \rho_i^{(2m-1,2m)}$.
\begin{theorem}\label{thm:i211}
    For $m\ge 2$ we have the following identity on the level of symbols
    \begin{equation} \label{eq:i211}
    -\frac{(2m-1)}{m!(m-1)!}\GR_m \;\modS\;
    \Alt_{2m}\II(0;0,\rho_1,\rho_2,\dots,\rho_{m-1};\rho_m)\,.
    \end{equation}
\end{theorem}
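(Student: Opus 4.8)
The identity~\eqref{eq:i211} is an identity of symbols, so the plan is to compute $\mathcal{S}$ of each side and compare. Substituting Goncharov's formula~\eqref{grsymbol} for $\mathcal{S}(\GR_m)$, and using that $\mathcal{S}$ is natural in its arguments (hence commutes with relabelling $v_1,\dots,v_{2m}$, and so with $\Alt_{2m}$), the statement to prove becomes
\begin{equation*}
\Alt_{2m}\,\mathcal{S}\,\II(0;0,\rho_1,\dots,\rho_{m-1};\rho_m)
= 2(-1)^{m+1}m(2m-1)\,\Alt_{2m}\bigl(\detv{1,\dots,m}\otimes\dots\otimes\detv{m,\dots,2m-1}\bigr)\,.
\end{equation*}
Thus it suffices to compute the symbol of a single iterated integral and then skew-symmetrise.

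First I would compute $\mathcal{S}\,\II(0;0,\rho_1,\dots,\rho_{m-1};\rho_m)$ by iterating the $(m-1,1)$-part of the coproduct~\eqref{eq:coprod}, i.e.\ via the recursion
\begin{equation*}
\mathcal{S}\,\II(a_0;a_1,\dots,a_n;a_{n+1}) = \sum_{j=1}^{n}\bigl(\mathcal{S}\,\II(a_0;a_1,\dots,\widehat{a_j},\dots,a_n;a_{n+1})\bigr)\otimes \II(a_{j-1};a_j;a_{j+1})\,,
\end{equation*}
applied with $(a_0,a_1,\dots,a_{m+1})=(0,0,\rho_1,\dots,\rho_{m-1},\rho_m)$ and with the regularisation forced by $a_0=a_1=0$. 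This writes $\mathcal{S}\,\II(0;0,\rho_1,\dots,\rho_{m-1};\rho_m)$ as an explicit $\ZZ$-linear combination of length-$m$ tensors whose slots are the functions $\rho_i$ and the differences $\rho_i-\rho_j$, the non-consecutive differences appearing only once some intermediate $\rho$'s have been deleted at earlier stages.

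Second, I would rewrite each slot in terms of determinants. The $\rho_i$ are, by definition, the ratios $\detv{i,\dots,i+m-2,2m-1}/\detv{i,\dots,i+m-2,2m}$; for consecutive indices the three-term Plücker relation yields
\begin{equation*}
\rho_i-\rho_{i+1} = \pm\,\frac{\detv{i,i+1,\dots,i+m-1}\;\detv{i+1,\dots,i+m-2,2m-1,2m}}{\detv{i,\dots,i+m-2,2m}\;\detv{i+1,\dots,i+m-1,2m}}\,,
\end{equation*}
which is the source of the ``staircase'' determinants $\detv{i,\dots,i+m-1}$, while the non-consecutive $\rho_i-\rho_j$ admit analogous but longer Plücker expansions whose numerators are genuinely irreducible of higher degree. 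Since the symbol is multiplicative in each slot and does not see the resulting $\pm1$ (a root of unity), this expresses $\mathcal{S}\,\II(0;0,\rho_1,\dots,\rho_{m-1};\rho_m)$ as a (large) signed sum of tensors whose slots are irreducible polynomials in the $v_i$, most of them single determinants $\detv{\cdot}$.

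The remaining step, which I expect to be the main obstacle, is to apply $\Alt_{2m}$ and show that everything collapses onto the staircase term. Two observations drive this: first, $\Alt_{2m}$ annihilates any tensor whose slots together involve at most $2m-2$ of the points $v_1,\dots,v_{2m}$, since a transposition of two unused points is then a sign-reversing symmetry; this already forces the surviving tensors to use all, or all but one, of the $v_i$, and in particular the ``spread-out'' irreducible non-determinant factors produced above must cancel among themselves after skew-symmetrisation, because the target is built purely from the $\detv{\cdot}$. Second, the remaining determinant tensors should reorganise, after $\Alt_{2m}$, into a fixed multiple of $\Alt_{2m}$ of the single staircase tensor. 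Proving these two points amounts to a combinatorial identity that packages the coproduct recursion, the Plücker relations, and the sign-averaging over $\mathfrak{S}_{2m}$, and would require careful bookkeeping and a classification of which skew-symmetrised determinant tensors are non-zero (the case $m=2$ being a short direct check); a final count of multiplicities would then produce the constant $2(-1)^{m+1}m(2m-1)$, i.e.\ the coefficient $-\tfrac{2m-1}{m!(m-1)!}$ of~\eqref{eq:i211}.
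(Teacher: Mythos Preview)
Your overall strategy---compute the symbol of the iterated integral, express the slots through determinants, skew-symmetrise, and count---is correct in outline and matches the paper's approach. However, there is a genuine gap at the point you yourself flag as the main obstacle.

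The issue is the non-consecutive differences $\rho_i-\rho_j$. As you note, these do not factor as products of single determinants: their numerators are honestly irreducible polynomials of higher degree. Your plan is to fully expand the symbol first and only then apply $\Alt_{2m}$, but once those irreducible factors appear in the tensor slots you have no mechanism for making them cancel. Saying that ``they must cancel among themselves because the target is built purely from the $\detv{\cdot}$'' is circular: it uses the identity you are trying to prove.

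The paper avoids this problem entirely by interleaving the skew-symmetrisation with the coproduct recursion rather than postponing it. At the very first application of the $(m-1,1)$-coproduct, the ``middle'' terms are
\[
\II(0;0,\rho_1,\dots,\widehat{\rho_j},\dots;\rho_m)\otimes\frac{\rho_{j}-\rho_{j+1}}{\rho_{j}-\rho_{j-1}}\,,\qquad 2\le j\le m-1\,,
\]
and the key observation is that the ratio on the right factors as a product of two pieces, one fixed by the transposition $(j-1,j)$ and the other fixed (up to sign) by $(j+m-2,j+m-1)$, while the iterated integral on the left is invariant under both. Hence each such term is killed by $\Alt_{2m}$ \emph{before} one ever iterates further. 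The same phenomenon repeats at every subsequent step. Consequently only ``boundary'' deletions survive, and the symbol collapses (under $\Alt_{2m}$) to a sum of tensors whose slots are $\rho_j$ and $1-\rho_{j+1}/\rho_j$ for \emph{consecutive} $j$ only---and these do factor cleanly into four determinants each. The remaining combinatorial classification (which elementary determinant tensors survive $\Alt_{2m}$, and why they are all equivalent to a single staircase tensor) is then a finite case analysis governed by a short lemma on transposition-invariance, and the count $m^2+2\sum_{k<m}k=m(2m-1)$ drops out.

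In short: the missing idea is not bookkeeping but the early vanishing of the middle coproduct terms under $\Alt_{2m}$, which is what prevents non-consecutive $\rho_i-\rho_j$ from ever entering the computation.
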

The proof will be given in Section~\ref{sec:proof_i211}.
As a corollary from this Theorem and Proposition~\ref{prop:grassfeq}
we obtain explicit geometric functional equations for the iterated
integral~$\II(0;0,x_1,\dots,x_{m-1};x_m)$.

\begin{remark} The proof can also be adapted to show the same formula
    with only the change of the lower bound of the integrals from 0 to $\infty$
    \begin{equation} \label{eq:i211inf}
    -\frac{(2m-1)}{m!(m-1)!}\GR_m \;\modS\;
    \Alt_{2m}\II(\infty;0,\rho_1,\rho_2,\dots,\rho_{m-1};\rho_m)\,.
    \end{equation}
    Here the iterated integrals $\II(\infty;a_1,\dots,a_m;a_{m+1})$
    can be shuffle-regularized and written as an explicit combination of
    iterated integrals evaluated at finite points. These integrals
    have better symmetry properties, for instance, they are
    invariant (up to sign and modulo products)
    under dihedral permutations of the variables $a_1,\dots,a_{m+1}$.
    These iterated integrals starting at~$\infty$ are
    also related to the ``motivic correlators''
    $\mathrm{Cor}_{\infty}(a_1,\dots,a_{m+1})$ from~\cite{Go-hodge}
    and~\cite{Go-Ru}
    (the main difference is that $\II(\infty;a_1,\dots,a_m;a_{m+1})$
    lives in the motivic Hopf algebra, while $\mathrm{Cor}_{\infty}(a_1,\dots,a_{m+1})$
    lives in the motivic Lie coalgebra, a quotient of the latter).
\end{remark}

\begin{remark}
    The geometric meaning of~$\rho_{i_1,\dots,i_{m-1}}^{(k,l)}$ is as follows.
    If we pick projective coordinates on the line~$\ell$ passing
    through~$P_{k}$ and~$P_{l}$ (where $P_j\in\mathbb{P}^{m-1}$
    corresponds to $v_j\in\CC^{m}$)
    in such a way that $P_{k}$ has coordinate $0$, and $P_{l}$
    has coordinate $\infty$, then $\rho_{i}$ is the
    coordinate for the intersection of $\ell$ with the
    hyperplane passing through the points $P_{i_1},\dots,P_{i_{m-1}}$.
    Note that~$\rho$'s depends on the choice of the vectors
    $v_{k},v_{l}$ that represent $P_{k},P_{l}\in\mathbb{P}^{m-1}$,
    but any ratio of two $\rho$'s is well-defined and projectively invariant.
\end{remark}

\begin{theorem}\label{thm:aomoto}
    For $m\ge 2$ we have the following identity
    \begin{equation} \label{eq:aomoto}
    \mathcal{A}_{m-1}(v_1,\dots,v_m;v_{m+1},\dots,v_{2m}) \;\modS\;
    \frac{(-1)^{m-1}}{m^2}\Alt_{m,m}\II(0;\rho_{m+1}^{(1,2m)},\rho_{m}^{(1,2m)},\dots,
    \rho_{3}^{(1,2m)};\rho_{2}^{(1,2m)})\,,
    \end{equation}
    where $\Alt_{m,m}$ denotes the skew-symmetrization over all index permutations 
    in $\mathfrak{S}_{\{1,\dots,m\}}\times\mathfrak{S}_{\{m+1,\dots,2m\}}$.
\end{theorem}
Let us also remark that the identities~\eqref{eq:i211} and~\eqref{eq:aomoto} also 
make sense as equalities between multivalued analytic functions, if one chooses their 
branches appropriately (compare with the remark after Theorem~3.9 in~\cite{Go2}).

\begin{remark}
\begin{enumerate}
\item There is a slightly better formula for the Aomoto polylogarithm in which we 
replace $\Alt_{m,m}$ by $\Alt_{m-1,m-1}$ (fixing the two labels 1 and $2m$---note the different
superscripts as opposed to the formula for $\GR_m$). This has 
the further benefit of cancelling the factor $m^2$. 
\item Furthermore, generalizing~\eqref{eq:i211} and~\eqref{eq:aomoto} one can also 
consider the higher weight functions
    \[\Alt_{m-1,m-1}
    \II(0;{\bf 0,\dots,0},\rho_{m+1}^{(1,2m)},\rho_{m}^{(1,2m)},\dots,\rho_{3}^{(1,2m)};\rho_{2}^{(1,2m)})\]
obtained by adding a string of zeros on the left (highlighted in boldface above) to the iterated integral (this construction is 
inspired by the construction of the ${\rm QLi}$ functions in~\cite{Ru}). Then 
following the initial steps in the proof of Theorem~\ref{thm:aomoto} gives a 
recursive formula for the $(n-1,1)$-part of the coproduct of these functions, that as 
a corollary shows that their symbol is a linear combination of tensor products of 
Pl\"ucker coordinates.
\end{enumerate}
\end{remark}

%
\medskip
\section{The Grassmannian polylogarithm in weight 4}
In this section we formulate our main results for the
weight~$4$ Grassmannian polylogarithm. First we give
an explicit formula for $\GR_4$ in terms of $\II_{3,1}$ and
$\Lic_4$ (Theorem~\ref{thm:gr4toi31_2}).
It is known that $\GR_4$ is not expressible in terms of $\Lic_4$  alone,
the only obstacle to doing so being the non-vanishing of the 2-part of its cobracket (see Section \ref{sec:modprod}).
We reproduce a `coboundary correction' for it---the $\Alt_8$ term on the left hand side of Theorem
\ref{thm:gr4toli4}---with matching cobracket.
This `coboundary correction'  is just a version of Goncharov's $\delta_{2,2}$ in  \cite{Go2}.
Finally we give an explicit expression for the
difference, i.e.~of $\GR_4$ minus this coboundary correction,
in terms of $\Lic_4$ only (Theorem~\ref{thm:gr4toli4}).
The resulting $\Lic_4$ expression is our version of the elusive \emph{quadruple ratio}.
(Here by a `coboundary'
we mean a linear combination of functions on configurations
of 8 points in $\mathbb{P}^3$ where each individual term depends on
at most 7 of these points. The reason for this terminology is that
such a `coboundary' lies in the image of the coboundary operator~$d$ of a
suitable cochain complex. Note that any such `coboundary' will trivially
vanish when we alternate over 9 points.)
While the existence of such formulas follows from the results
of Goncharov and Rudenko in~\cite{Go-Ru}, their proof does
not seem to give any practical approach to obtaining them.

\subsection{\texorpdfstring{Explicit formula for $\GR_4$ in terms of $\II_{3,1}$ and $\Lic_{4}$}{Explicit formula for GR\textunderscore{}4 in terms of I\textunderscore{}\{3,1\} and Li\textunderscore{}4}}
Theorem~\ref{thm:i211} already gives us an explicit formula for
$\GR_4$ in terms of iterated integrals, to which one
could apply the known reduction formulas in weight~$4$
(see~\cite{Dan},~\cite{Dan2},~\cite{Ga},~\cite{Ch}) to obtain an
explicit expression in terms of~$\II_{3,1}$ and~$\Lic_4$
(recall that $\II_{3,1}(x,y) = \II(0;x,0,0,y;1)$).
This reduction, however, produces a somewhat complicated
expression. Instead we will give a direct formula for~$\GR_4$
in terms of~$\II_{3,1}$ and~$\Lic_4$ that is much shorter.

We are working with the configuration space $\Conf{8}{4}$
and, as in the more general situation in Theorem~\ref{thm:i211},
we set
    \[\rho_{i}\coloneqq \rho_{\{i,i+1,i+2\}}^{(7,8)} =
    \frac{\detv{i,i+1,i+2,7}}{\detv{i,i+1,i+2,8}}\,,\qquad 1\le i \le 4\,.\]
We will also use the following notation for projected cross-ratios
	\[\CR(ab| cdef) \coloneqq
	\frac{\detv{abce}\detv{abdf}}{\detv{abcf}\detv{abde}}\,.\]
Geometrically $\CR(ab| cdef)$ is simply the cross-ratio
of the images of $v_{c},v_{d},v_{e},v_{f}$ in $\mathbb{P}(\CC^4/\langle 
v_a,v_b\rangle)$. 
\begin{theorem}\label{thm:gr4toi31_2}
    We have the following identity modulo products
    \begin{equation}\label{eq:gr4i31_2}
    \begin{split}
    \frac{7}{144}\,\GR_4 \;\modsh\; \Alt_8 \Big[
    \II_{3,1}\Big(\frac{\rho_{1,2}\rho_{3,4}}
    {\rho_{3,2}\rho_{1,4}},
    \frac{\rho_1}{\rho_{1,4}}\Big)
    +2&\II_{3,1}\Big(\frac{\rho_{1,2}}
    {\rho_1},
    \frac{\rho_{3,2}}{\rho_{3,4}}\Big)
    +6\Lic_{4}\Big(\frac{\rho_1\rho_{3,2}}
    {\rho_{1,2}\rho_{3,4}}\Big)\Big]\,,
    \end{split}
    \end{equation}
    where we denote $\rho_{i,j}=\rho_i-\rho_j$.
\end{theorem}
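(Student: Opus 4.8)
The plan is to verify the identity directly on the level of the mod-products symbol $\mathcal{S}^\shuffle$, using Theorem~\ref{thm:i211} as the starting point. By Theorem~\ref{thm:i211} (with $m=4$), we have $-\frac{7}{144}\GR_4 \modS \Alt_8 \II(0;0,\rho_1,\rho_2,\rho_3;\rho_4)$, where the $\rho_i = \rho_{i,i+1,i+2}$ are the projected cross-ratio coordinates with indices mod $6$. So the first step is to compute $\mathcal{S}^\shuffle$ of the right-hand side of~\eqref{eq:gr4i31_2}, i.e.\ of the combination of two $\II_{3,1}$ terms and one $\Lic_4$ term, each alternated over $\mathfrak{S}_8$ acting on $v_1,\dots,v_8$. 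Using $\mathcal{S}^\shuffle \Lic_4(x) = (x\wedge(1-x))\otimes x\otimes x$ and the analogous (slightly longer, depth-$\le 2$) formula for $\mathcal{S}^\shuffle \II_{3,1}(x,y)$, one expands everything into $\otimes^4$-tensors whose entries are products of determinants $\detv{\cdots}$. Then one does the same for $\Alt_8 \II(0;0,\rho_1,\rho_2,\rho_3;\rho_4)$ using the coproduct formula~\eqref{eq:coprod} iterated to the symbol, together with the regularization rules for $\II(a;b;c)$, and checks the two symbols agree after applying $\widetilde\Pi_4$ and alternating.

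The key simplifications I would exploit are: first, the arguments appearing inside the $\II_{3,1}$ and $\Lic_4$ terms on the right are precisely multiplicative combinations of the $\rho_{i,j} = \rho_i - \rho_j$ and the $\rho_i$; and there is a projective-geometry identity expressing each $\rho_i - \rho_j$ as a monomial in the $\detv{\cdots}$ — indeed $\rho_i - \rho_j = \frac{\detv{\cdots}\detv{\cdots}}{\detv{ijk8}\detv{i'j'k'8}}$ type expressions, coming from the Plücker relations, since $\rho_i$ is a coordinate on the projective line through $P_7,P_8$. This means every tensor entry that shows up is, up to sign, a product of $\detv{\cdots7}$'s, $\detv{\cdots8}$'s, and $\detv{\cdots}$'s not involving $7,8$. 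The second simplification is that after alternating over all of $\mathfrak{S}_8$, any tensor entry that fails to involve all eight indices in a ``balanced'' way, or any term supported on fewer than $8$ points, drops out (this is the ``coboundaries vanish under $\Alt_9$'' philosophy, used here already at the level of $\Alt_8$ to kill spurious pieces). So the strategy is to expand both sides, reduce all entries to products of Plücker coordinates, then match coefficients of each $\Alt_8$-orbit of $\otimes^4$-monomials.

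The main obstacle I expect is the sheer combinatorial bulk: $\mathcal{S}^\shuffle \II_{3,1}(x,y)$ is a sum of several $\otimes^4$-terms, the substitution $x,y \mapsto$ (ratios of $\rho$'s) splits each $\log$ of a ratio into many additive pieces, and then one alternates over $8! = 40320$ permutations. Realistically this is a computer-algebra verification: one implements the symbol map, the $\widetilde\Pi_4$ projector, and the $\Alt_8$ operator, plugs in generic numerical or symbolic coordinates $v_1,\dots,v_8 \in \CC^4$, and checks that $\frac{7}{144}\mathcal{S}^\shuffle(\GR_4)$ minus $\mathcal{S}^\shuffle$ of the right-hand side is identically zero as a tensor. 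The conceptual content that makes this plausible — and which I would highlight — is that the right-hand side is built from depth-$\le 2$ functions whose symbols, by the known weight-$4$ reduction results (\cite{Dan},~\cite{Dan2},~\cite{Ga},~\cite{Ch}), span the same space modulo products as the generic weight-$4$ symbol, so existence of \emph{some} such expression is guaranteed; the theorem's content is the specific short form, and verifying it is a finite check. A human-readable proof would instead start from formula~\eqref{eq:i211} for $\GR_4$, apply the explicit weight-$4$ depth-reduction algorithm to $\Alt_8\II(0;0,\rho_1,\rho_2,\rho_3;\rho_4)$, and then simplify the resulting $\II_{3,1}$/$\Lic_4$ combination using the $\Alt_8$-symmetry and dilogarithm/$\II_{3,1}$ functional equations to collapse it to the three-term expression stated; I expect the bookkeeping of which functional-equation terms vanish under $\Alt_8$ to be the delicate part of writing that version down.
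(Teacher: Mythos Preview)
Your plan is sound: the identity is on the level of $\mathcal{S}^{\shuffle}$, and a brute-force computer verification is a legitimate proof. The paper says exactly this at the opening of its proof: ``While it is relatively easy to do such a check directly on a computer, we will do this computation in a more structured way.'' So as a strategy your proposal is correct.

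The paper's execution, however, is rather different from either of the routes you sketch. It does \emph{not} pass through Theorem~\ref{thm:i211} and depth reduction; indeed, just before stating the theorem the paper remarks that this route ``produces a somewhat complicated expression'' and opts for a direct argument instead. What the paper actually does is: (i) prove an auxiliary formula (Lemma~\ref{lem:gr4li2}) expressing $\frac{7}{144}\mathcal{S}^{\shuffle}(\GR_4)$ as $8\Alt_8[\mathcal{S}^{\shuffle}\Lic_2(\rho_{2,4}/\rho_{3,4})\otimes\rho_{1,2}\otimes(\rho_1/\rho_4)]$; (ii) expand the right-hand side of~\eqref{eq:gr4i31_2} not all at once, but by peeling off one tensor factor at a time via the recursive formulas~\eqref{eq:i31cobracketa}--\eqref{eq:i21cobracketa} for $\mathcal{S}^{\shuffle}\II_{k,1}$; and (iii) at each stage, kill terms by exhibiting specific permutations (e.g.\ $\cyc{1,6}\cyc{2,5}\cyc{3,4}$, $\cyc{7,8}$, $\cyc{2,3}$) that fix them, and apply concrete five-term relations such as~\eqref{eq:fiveterm1} and~\eqref{eq:fivetermr24r34}. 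The endgame splits into symmetric and skew-symmetric parts in the last two tensor positions, handled by a separate lemma. The advantage of the paper's route is that every cancellation is witnessed by an explicit permutation or relation, so the proof is human-checkable and exposes why the short formula works; the advantage of your route is that it is conceptually simpler and leaves the bookkeeping to a machine.
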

The proof will be given in Section~\ref{sec:pf_i31}.
\begin{remark}\label{rem:gr4toi31}
Note that the combination inside the square brackets on the right-hand side
is $\Alt_8$-equivalent to the function 
$C_4(\rho_1,\rho_2,\rho_3,\rho_4,\infty,0)$ defined in the introduction.
The function $C_4$ essentially coincides with the map 
$L_4^{1}$,~\cite[eq.~(168)]{Go-Ru}, used
by Goncharov and Rudenko to construct a map from the chain complex of the Stasheff polytope to the polylogarithmic complex. This suggests a
connection between Grassmannian polylogarithms~$\GR_{2k}$ and the
cluster polylogarithm map in weight~$2k$ from~\cite{Go-Ru} (the latter
being a conjectural object for~$k\ge 3$). The formula for $\GR_5$
from Theorem~\ref{thm:gr5toi41} and Remark~\ref{rem:gr5toi41} below
suggests that something analogous might also be the case for Grassmannian
polylogarithms in odd weights.
\end{remark}

\smallskip
\subsection{\texorpdfstring{Explicit formula for $\GR_4$ minus coboundary in terms of $\Lic_4$}{Explicit formula for GR\textunderscore{}4 minus coboundary in terms of Li\textunderscore{}4}}
It is known that~$\GR_4$ cannot be written purely in terms
of~$\Lic_4$ (this is explained in~Section~1 in~\cite{Go1}),
so one cannot hope to completely remove~$\II_{3,1}$ from~\eqref{eq:gr4i31_2}.
Nevertheless, for the application to Zagier's Conjecture it is
important to have an expression in terms of~$\Lic_4$ for some
function that represents the same cohomology class (as before,
we interpret this naively: it has to be a function of the form
$\GR_4(v_1,\dots,v_8)-\Alt_8 f(v_1,\dots,v_7)$ for some~$f$).
In Theorem~\ref{thm:gr4toli4} we exhibit an explicit expression
of exactly this type. First we recall some results
about~$\II_{3,1}(x,y)$.
\begin{theorem}[\cite{Ga}, Prop.~22]\label{thm:i31syms}
    \begin{enumerate}
    	\item[(i)] Modulo products
    $\II_{3,1}(x,y)+\II_{3,1}(1-x,y)$ is equal to
    \begin{align*}
     \frac{1}{2}&\Lic_4\Big(\frac{x(1-y)}{y(1-x)}\Big)
    -\frac{1}{2}\Lic_4\Big(\frac{xy}{(1-x)(1-y)}\Big)
    +\frac{1}{2}\Lic_4\Big(\frac{y(1-y)}{x(1-x)}\Big)
    +\Lic_4\Big(\frac{y}{y-1}\Big)\\
    -&\Lic_4\Big(\frac{1-y}{x}\Big)
    -3\Lic_4\Big(\frac{y}{x}\Big)
    -3\Lic_4\Big(\frac{y}{1-x}\Big)
    -\Lic_4\Big(\frac{1-y}{1-x}\Big)\,.
    \end{align*}
    \item[(ii)] (Zagier) Modulo products the combination
    $\II_{3,1}(x,y)+\II_{3,1}(1/x,y)$ is equal to
    \begin{align*}
    -\frac{1}{2}&\Lic_4\Big(\frac{x(1-y)^2}{y(1-x)^2}\Big)
    +\frac{3}{2}\Lic_4\Big(\frac{1}{xy}\Big)
    -\frac{3}{2}\Lic_4\Big(\frac{y}{x}\Big)
    +2\Lic_4\Big(\frac{1-y}{1-x}\Big)\\
    +2&\Lic_4\Big(\frac{x(1-y)}{x-1}\Big)
    +2\Lic_4\Big(\frac{y-1}{y(1-x)}\Big)
    +2\Lic_4\Big(\frac{x(1-y)}{y(1-x)}\Big)
    -2\Lic_4\Big(\frac{1}{1-x}\Big)\\
    -2&\Lic_4\Big(\frac{x}{x-1}\Big)
    +2\Lic_4\Big(\frac{1}{1-y}\Big)
    -\Lic_4\Big(\frac{1}{y}\Big)
    +2\Lic_4\Big(\frac{y}{y-1}\Big) \,.
    \end{align*}
    \item[(iii)] We have
    \[\II_{3,1}(x,y)+\II_{3,1}(y,x) \modsh 0 \,.\]

\end{enumerate}
\end{theorem}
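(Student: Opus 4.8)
I address part~(iii), which admits a short proof from shuffle relations; parts~(i) and~(ii) I would handle by the direct symbol computation sketched at the end.

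The plan for~(iii) is to derive it from two shuffle product identities. For generic $x,y$ every iterated integral with lower endpoint $0$ and upper endpoint $1$ appearing below is convergent (an iterated integral $\II(a_0;a_1,\dots,a_n;a_{n+1})$ converges as soon as $a_1\neq a_0$ and $a_n\neq a_{n+1}$), so the shuffle identity $\II(0;w_1;1)\,\II(0;w_2;1)=\sum_{w\in w_1\shuffle w_2}\II(0;w;1)$ holds with no regularization needed; equivalently one may work in the motivic Hopf algebra, where shuffle is automatic. In either setting the left-hand side is a product of two iterated integrals of positive weight, so its mod-products symbol vanishes, and each identity becomes a relation $\modsh 0$ among the weight-$4$ depth-$2$ integrals $\II_{n_1,n_2}(\,\cdot\,,\,\cdot\,)$.

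First I would shuffle $\II(0;x;1)$ against $\II(0;y,0,0;1)$: inserting the letter $x$ into the four slots of $(y,0,0)$ gives the words $(x,y,0,0)$, $(y,x,0,0)$, $(y,0,x,0)$, $(y,0,0,x)$, which under the identification $\II_{n_1,n_2}(z_1,z_2)=\II(0;z_1,\{0\}^{n_1-1},z_2,\{0\}^{n_2-1};1)$ are $\II_{1,3}(x,y)$, $\II_{1,3}(y,x)$, $\II_{2,2}(y,x)$, $\II_{3,1}(y,x)$. Hence
\[ \II_{3,1}(y,x)\modsh -\II_{1,3}(x,y)-\II_{1,3}(y,x)-\II_{2,2}(y,x)\,, \]
and, exchanging $x\leftrightarrow y$, the same relation with $\II_{3,1}(x,y)$ on the left. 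Adding the two,
\[ \II_{3,1}(x,y)+\II_{3,1}(y,x)\modsh -2\II_{1,3}(x,y)-2\II_{1,3}(y,x)-\II_{2,2}(x,y)-\II_{2,2}(y,x)\,. \]
Second, I would shuffle $\II(0;x,0;1)$ against $\II(0;y,0;1)$; the six interleavings, counted with the multiplicities coming from the repeated $0$'s, give $\II_{2,2}(x,y)+2\II_{1,3}(x,y)+2\II_{1,3}(y,x)+\II_{2,2}(y,x)\modsh 0$, i.e.
\[ \II_{2,2}(x,y)+\II_{2,2}(y,x)\modsh -2\II_{1,3}(x,y)-2\II_{1,3}(y,x)\,. \]
Substituting this into the preceding display makes the right-hand side vanish identically, which is~(iii).

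I expect no real obstacle in~(iii): the only points needing care are the combinatorics of shuffles with repeated letters and a one-line check of convergence (so that no shuffle-regularization corrections intervene). For~(i) and~(ii), whose right-hand sides are genuine $\Lic_4$-combinations, I would instead compute $\mathcal{S}(\II_{3,1}(x,y))$ explicitly --- from $\II(0;x,0,0,y;1)$ by the standard symbol recursion, or from $\II_{3,1}(x,y)=\Lic_{3,1}(y/x,1/y)$ and the known symbol of $\Lic_{3,1}$ --- obtaining a sum of weight-$4$ tensors with entries among $x,y,1-x,1-y,x-y$; then apply $\widetilde{\Pi}_4$ to pass to $\mathcal{S}^{\shuffle}$, add the image under $x\mapsto 1-x$ (resp.\ $x\mapsto 1/x$), and verify equality termwise with $\mathcal{S}^{\shuffle}$ of the stated combination, using $\mathcal{S}^{\shuffle}\Lic_4(z)=(z\wedge(1-z))\otimes z\otimes z$. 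The genuine labor there is organizing this finite but sizeable linear-algebra check; since the $\Lic_4$-combinations are given there is no search to perform, but one must track carefully the constant (``$\log(-1)$-type'') entries of trivial symbol that arise, e.g., when $x-y$ is sent to $y-x$.
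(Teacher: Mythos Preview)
The paper does not supply its own proof of this theorem: it is quoted from \cite{Ga} (with part~(ii) attributed to Zagier), so there is nothing to compare against at the level of argument.

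That said, your treatment of~(iii) is correct and pleasantly clean. The two shuffle identities you use are exactly right: $(x)\shuffle(y,0,0)$ gives the four terms $\II_{1,3}(x,y)+\II_{1,3}(y,x)+\II_{2,2}(y,x)+\II_{3,1}(y,x)$, and $(x,0)\shuffle(y,0)$ gives $\II_{2,2}(x,y)+2\II_{1,3}(x,y)+2\II_{1,3}(y,x)+\II_{2,2}(y,x)$; combining the symmetrized version of the first with the second yields~(iii) at once. Your convergence remark is fine, though it is not really needed: in the paper's setup one works either motivically or at the level of the symbol, where the shuffle product is formal.

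For~(i) and~(ii) you only sketch the route (compute $\mathcal{S}^{\shuffle}\II_{3,1}$ directly, then match). This is the standard approach and is how such identities are verified in practice, so there is no conceptual obstacle, only bookkeeping. One minor comment: your worry about ``$\log(-1)$-type'' entries is not an issue here, since the paper works modulo torsion (so $a\otimes(-b)=a\otimes b$ in the symbol). If you wanted to shorten the check for~(ii), you could note that $x\mapsto 1/x$ acts on $\II_{3,1}(x,y)=\II(0;x,0,0,y;1)$ essentially via the affine/rescaling invariance of iterated integrals, which constrains the shape of the answer before any expansion.
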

Combining these identities we obtain the following.
\begin{definition}
The function~$\wI_{3,1}(x,y)$ is defined by
    \[\wI_{3,1}(x,y) \coloneqq
    \frac{1}{36}\sum_{\sigma,\pi\in\mathfrak{S}_3}
    \sgn(\sigma)\sgn(\pi)\II_{3,1}(x^{\sigma},y^{\pi})
    \,,\]
where~$x^{\sigma}$ denotes elements of
$\{x,\frac{1}{x},1-x,1-\frac{1}{x},\frac{1}{1-x},\frac{x}{x-1}\}$ (the anharmonic group)
corresponding to $\sigma\in\mathfrak{S}_3$ (under some isomorphism).
\end{definition}
\begin{proposition} \label{prop:i31tilde}
    There is an explicit combination of $\Lic_4$ terms,
    denoted by $\text{\bf Sym}_{36}(x,y)$,
    such that
    \[\II_{3,1}(x,y)-\wI_{3,1}(x,y)\;\modsh\;
    \text{\bf Sym}_{36}(x,y) \,.\]
    Moreover, the function $\wI_{3,1}(x,y)$ satisfies
    \begin{align*}
    \wI_{3,1}(x^{\sigma},y^{\pi}) \;&\modsh\;
    \sgn(\sigma)\sgn(\pi)\wI_{3,1}(x,y) \,,\\
    \wI_{3,1}(y,x) \;&\modsh\;
    -\wI_{3,1}(x,y) \,.
    \end{align*}
\end{proposition}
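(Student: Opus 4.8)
The plan is to reduce the statement entirely to the three identities of Theorem~\ref{thm:i31syms}, handling the two arguments of $\II_{3,1}$ one at a time. First I would resolve the anharmonic‑group action in the first slot. The anharmonic group is generated by the involutions $s\colon x\mapsto 1-x$ and $t\colon x\mapsto 1/x$, and parts~(i) and~(ii) of Theorem~\ref{thm:i31syms} say, modulo products, $\II_{3,1}(x^{s},y)\modsh-\II_{3,1}(x,y)+A(x,y)$ and $\II_{3,1}(x^{t},y)\modsh-\II_{3,1}(x,y)+B(x,y)$, where $A$ and $B$ denote the two displayed $\Lic_4$-combinations. Fixing for each $\sigma\in\mathfrak{S}_3$ a word in $s,t$ and iterating these two relations, one obtains for every $\sigma$ an identity $\II_{3,1}(x^{\sigma},y)\modsh\sgn(\sigma)\,\II_{3,1}(x,y)+C_\sigma(x,y)$, in which $C_\sigma$ is an explicit $\QQ$-linear combination of $\Lic_4$'s evaluated at rational functions of $x,y$ (a short alternating sum of copies of $A$ and $B$ with their first argument moved along the anharmonic orbit of $x$); here $C_{\mathrm{id}}=0$, $C_s=A$, $C_t=B$.

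Next I would transfer this correction to the second slot. Theorem~\ref{thm:i31syms}(iii) gives $\II_{3,1}(u,v)\modsh-\II_{3,1}(v,u)$, and combining it with the first‑slot formula twice one finds, for all $\sigma,\pi\in\mathfrak{S}_3$, $\II_{3,1}(x^{\sigma},y^{\pi})\modsh\sgn(\sigma)\sgn(\pi)\,\II_{3,1}(x,y)+D_{\sigma,\pi}(x,y)$ with $D_{\sigma,\pi}(x,y)=\sgn(\pi)\,C_\sigma(x,y)-C_\pi(y,x^{\sigma})$, again an explicit $\Lic_4$-combination in $x,y$. Substituting this into the definition of $\wI_{3,1}$, the contributions of $\II_{3,1}(x,y)$ sum with total coefficient $\tfrac1{36}\sum_{\sigma,\pi}\sgn(\sigma)^2\sgn(\pi)^2=1$, so $\wI_{3,1}(x,y)\modsh\II_{3,1}(x,y)+\tfrac1{36}\sum_{\sigma,\pi\in\mathfrak{S}_3}\sgn(\sigma)\sgn(\pi)\,D_{\sigma,\pi}(x,y)$. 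One then simply \emph{defines} $\symsix(x,y):=-\tfrac1{36}\sum_{\sigma,\pi}\sgn(\sigma)\sgn(\pi)\,D_{\sigma,\pi}(x,y)$; this is by construction the required explicit $\Lic_4$-combination with $\II_{3,1}(x,y)-\wI_{3,1}(x,y)\modsh\symsix(x,y)$, and expanding the $A$'s and $B$'s and collecting terms yields the displayed closed form.

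The two symmetry relations for $\wI_{3,1}$ I would prove formally, without invoking parts~(i),(ii). The six rational functions $\{x,1/x,1-x,1-1/x,1/(1-x),x/(x-1)\}$ carry a genuine $\mathfrak{S}_3$-action with $(x^{\sigma})^{\tau}=x^{\sigma\tau}$ as rational functions; reindexing the double sum defining $\wI_{3,1}(x^{\sigma},y^{\pi})$ by $\tau\mapsto\sigma^{-1}\tau$ and $\mu\mapsto\pi^{-1}\mu$ and using that $\sgn$ is a homomorphism gives $\wI_{3,1}(x^{\sigma},y^{\pi})=\sgn(\sigma)\sgn(\pi)\wI_{3,1}(x,y)$ on the nose, hence in particular modulo products. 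For $\wI_{3,1}(y,x)\modsh-\wI_{3,1}(x,y)$, apply Theorem~\ref{thm:i31syms}(iii) to each summand and swap the names of the two summation indices.

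The one conceptual wrinkle is the consistency of the $C_\sigma$ produced in the first step: they should be independent, modulo products, of the chosen word in $s,t$ — equivalently, iterating the relations around $s^2=t^2=(st)^3=1$ should return $\II_{3,1}(x,y)$ with zero net $\Lic_4$-correction. This is automatic from the fact that $\II_{3,1}$ genuinely satisfies~(i)--(ii) (so it is not a logical gap in the argument above, which only ever uses one fixed word per $\sigma$), but making it transparent requires the standard functional equations for $\Lic_4$; it is a routine check. Beyond that, the real effort is purely computational: $\symsix$ is a symmetrization over $36$ pairs of anharmonic arguments and, before simplification, a large but completely explicit combination of $\Lic_4$-terms, so the work lies in massaging it into a reasonably compact form rather than in any genuine difficulty.
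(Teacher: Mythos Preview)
Your approach is correct and coincides with the paper's: the appendix states exactly that $\symsix$ ``can be obtained by applying Theorem~\ref{thm:i31syms} to relate every $\II_{3,1}(x^\sigma,y^\pi)$ in $\wI_{3,1}(x,y)$ back to $\sgn(\sigma)\sgn(\pi)\,\II_{3,1}(x,y)$'', and you carry this out explicitly by chaining parts~(i), (ii) through a word in the generators $s,t$ for the first slot, then using~(iii) to flip and repeat. Your observation that the first symmetry holds on the nose (by reindexing the double sum) is even slightly sharper than the paper's $\modsh$ statement; the rest is the same argument.
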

For the sake of completeness we give an expression for $\symsix(x,y)$
in terms of~$\Lic_4$ in Appendix~\ref{app:i31}. The key property
of~$\II_{3,1}$ that we need is the following theorem that is the
main result of~\cite{Ga}.
\begin{theorem}[Gangl, \cite{Ga}] \label{thm:i31fiveterm}
    There is an explicit collection of
    rational functions $f_j\in\QQ(x,y,z)$
    and numbers~$c_j\in\QQ$ such that
    \[\wI_{3,1}
    \Big(z,[x]+[y]+\Big[\frac{1-x}{1-xy}\Big]+[1-xy]
    +\Big[\frac{1-y}{1-xy}\Big]\Big)
    \;\modsh\; \sum_{j=1}^{N} c_j\Lic_4(f_j(x,y,z)) \,.\]
\end{theorem}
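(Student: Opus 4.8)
Since both sides are compared via the mod-products symbol $\mathcal{S}^{\shuffle}$, the statement is an identity between two weight-$4$ classes in the Lie coalgebra $\mathcal{L}_4$, and the natural plan has two stages: first show that the left-hand side is a $\QQ$-linear combination of $\Lic_4$-terms, and then exhibit such a combination explicitly. The guiding principle, as in the discussion of $\GR_4$ above, is that a weight-$4$ class is a combination of $\Lic_4$'s precisely when the $\mathcal{L}_2\wedge\mathcal{L}_2$-part $\delta_{2,2}$ of its cobracket vanishes --- in weight $4$ this is not merely the conjecture recalled in Section~\ref{sec:modprod} but follows from the structure of the motivic Lie coalgebra established in~\cite{Go-Ru} (see also~\cite{Go2,Go-ams}) --- while the $\delta_{2,2}$-obstruction of $\wI_{3,1}(z,w)$ is built from the dilogarithm class of $w$, which the five-term combination annihilates by definition.

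The first step is thus to compute $\delta_{2,2}\wI_{3,1}(z,w)$. I would start from the coproduct~\eqref{eq:coprod} of $\II(0;z,0,0,w;1)$, take its $(2,2)$-component, and project both tensor slots to the classes $\{g\}_2\in\mathcal{L}_2$ they define; one then antisymmetrizes over the two anharmonic $\mathfrak{S}_3$-actions, where Proposition~\ref{prop:i31tilde} and Theorem~\ref{thm:i31syms} are essential, since $\wI_{3,1}$ already carries the sign characters and is antisymmetric under $z\leftrightarrow w$, so the $36$-element orbit collapses dramatically. The expected outcome is
\[
\delta_{2,2}\,\wI_{3,1}(z,w)\;=\;c\,\{z\}_2\wedge\{w\}_2
\]
for a nonzero rational constant $c$, its nonvanishing reflecting the known fact that $\wI_{3,1}$ is itself not a $\Lic_4$-combination. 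Granting this, write $w_1,\dots,w_5$ for the five arguments $x,\,y,\,\tfrac{1-x}{1-xy},\,1-xy,\,\tfrac{1-y}{1-xy}$; since $[w_1]+\dots+[w_5]$ is trivial in $\mathcal{L}_2$ by the five-term relation, summing the identity above over the second argument gives $\delta_{2,2}$ of the left-hand side equal to $c\,\{z\}_2\wedge 0=0$, whence, by the weight-$4$ structure theorem, the left-hand side is a combination of $\Lic_4$-terms.

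It remains to produce that combination explicitly, which I would do at the level of symbols. One computes $\mathcal{S}^{\shuffle}$ of the left-hand side as an explicit tensor over $\QQ(x,y,z)$, again using the symmetries of $\wI_{3,1}$ to keep it manageable; the five-term relation, applied now in the first two tensor slots in the form $\sum_{i=1}^{5}w_i\wedge(1-w_i)=0$, is what makes the part of this symbol not of the shape $(f\wedge(1-f))\otimes f\otimes f$ cancel. Reading off from the last tensor slot the finite list of rational functions $f_j$ that can occur, one makes the ansatz $\sum_j\nu_j\,(f_j\wedge(1-f_j))\otimes f_j\otimes f_j$ and solves the resulting (necessarily consistent) linear system for the coefficients $\nu_j\in\QQ$.

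The main obstacle is computational, not conceptual. Even after using the full $\mathfrak{S}_3\times\mathfrak{S}_3$-symmetry, $\wI_{3,1}$ is a $36$-term antisymmetrization, the five-term substitution enlarges the expression once more, and the symbols live in a large space over a function field in three variables; organising the cancellations so that the resulting list $\{(f_j,\nu_j)\}$ is of reasonable size rather than astronomically long is where the real work lies, and it requires serious computer algebra. A secondary technical nuisance is the careful tracking of the boundary regularizations $\II(a;b;c)$ (the $\log$-terms in~\eqref{eq:coprod}) and of products, so that every intermediate identity holds exactly modulo products, and not merely up to lower-depth or constant ambiguities.
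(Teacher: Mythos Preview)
The paper does not give its own proof of this theorem: it is quoted as ``the main result of~\cite{Ga}'', and the explicit $\Lic_4$-combination is simply reproduced in Appendix~\ref{app:i31}. So there is no in-paper argument to compare against; any verification amounts to checking the symbol identity for the displayed formula.

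Your two-stage plan is sound in outline, and its Stage~2 --- expand $\mathcal{S}^{\shuffle}$ of the left-hand side over $\QQ(x,y,z)$, posit an ansatz $\sum_j\nu_j\Lic_4(f_j)$, and solve the resulting linear system --- is exactly how such identities are found and verified in practice, and is in spirit what \cite{Ga} does. Your observation that $\delta_{2,2}\wI_{3,1}(z,w)=c\,\{z\}_2\wedge\{w\}_2$ (forced by the $\mathfrak{S}_3\times\mathfrak{S}_3$ sign-symmetry and the antisymmetry $z\leftrightarrow w$ of $\wI_{3,1}$) is the right heuristic for why the five-term sum should land in depth~$1$.

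Two caveats on Stage~1. First, invoking the weight-$4$ structure theorem of \cite{Go-Ru} to deduce $\Lic_4$-expressibility from $\delta_{2,2}=0$ is anachronistic: Gangl's identity predates \cite{Go-Ru}, and you should check carefully that the relevant part of \cite{Go-Ru} does not itself rely on \cite{Ga}. Second, Stage~1 is in any case not needed for the proof: once an explicit right-hand side is written down (as in Appendix~\ref{app:i31}), the statement is a bare symbol identity and is verified directly. Stage~1 only explains \emph{why} the linear system in Stage~2 is consistent; the proof is Stage~2 alone.

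One practical correction: ``reading off from the last tensor slot the finite list of rational functions $f_j$'' understates the difficulty. The arguments that actually occur (see Appendix~\ref{app:i31}) are far from monomials in the obvious cross-ratios --- for instance, terms like
\[
-\frac{(1234)(5768)\bigl((7659)-(1234)\bigr)}{(5968)^2\bigl((8659)-(1234)\bigr)^2}
\]
appear --- so producing a workable ansatz is itself a substantial search, not a mechanical read-off from the symbol.
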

We denote the left-hand side of the above expression
by $\fiveterm(z;x,y)$; by this theorem it is equal to an
explicit combination of~$\Lic_4$ terms. For the sake of
completeness we also reproduce the expression for $\fiveterm(z;x,y)$
in terms of~$\Lic_4$ in Appendix~\ref{app:i31}.

Note that due to the $6$-fold symmetries of~$\wI_{3,1}$ we have
that $\wI_{3,1}(x,\CR(ab| cdef))$ is symmetric in~$\{a,b\}$,
and skew-symmetric in~$\{c,d,e,f\}$ (recall that $\CR(ab| cdef)$
is the projected cross-ratio).
Finally, we extend~$\fiveterm$ by linearity to formal
linear combinations of arguments
    \[\fiveterm(a;\,\sum_j\nu_j[x_j;y_j]) \coloneqq
    \sum_j\nu_j\fiveterm(a;\,x_j,y_j)\,,\qquad\nu_{j}\in\QQ\,.\]
For simplicity we will write $[ab|cdef;ij|klmn]$ instead
of $[\CR(ab| cdef);\CR(ij| klmn)]$.
We are now ready to state our main result regarding the weight~$4$
Grassmannian polylogarithm.
\begin{theorem} \label{thm:gr4toli4}
    We have
    \begin{align}\label{eq:gr4toli4}
    \begin{split}
    \frac{7}{144}&\GR_4 + 2\Alt_8 \wI_{3,1}(\CR(34|2567),
    \CR(67|1345))\\
    \modsh \Alt_8 \Big[
    -&\fiveterm\big(\tfrac{\rho_4}{\rho_1};\,
    [\tfrac{\rho _{4,2}}{\rho_{4,1}};\tfrac{\rho_{4,1}}{\rho_{4,3}}]
    -            [43|2685;48|7653]
    +\tfrac{1}{4} [43|1256;43|1268]
    -\tfrac{1}{12}[43|1256;42|1365]\big)\\
    +&\fiveterm(\tfrac{\rho_2}{\rho_1};\,
    -            [43|2685;48|7653]
    +            [48|7235;48|7263]
    +\tfrac{1}{2}[46|5238;43|2568]) \\
    &+\symsix(\tfrac{\rho _{1,2} \rho _{3,4}}{\rho _{1,4} \rho _{3,2}},
    \tfrac{\rho_1}{\rho _{1,4}})
    +2\symsix(\tfrac{\rho _{1,2}}{\rho _1},
    \tfrac{\rho _{3,2}}{\rho_{3,4}})
    +6 \Lic_4(\tfrac{\rho _1 \rho _{3,2}}{\rho _{1,2} \rho _{3,4}})
    \Big]\,.
    \end{split}
    \end{align}
\end{theorem}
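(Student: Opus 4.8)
The plan is to verify the claimed identity purely on the level of the mod-products symbol $\mathcal{S}^{\shuffle}$, since every object appearing is built from iterated integrals and $\Lic_4$'s, and two such combinations that agree under $\mathcal{S}^{\shuffle}$ differ by a combination of $\Li_4$'s of rational functions whose mod-products symbol vanishes, hence (by the property quoted from \cite{Za-conj}, applied after alternation) by a constant on the configuration space. Thus it suffices to prove the equality of mod-products symbols. The starting point is Theorem~\ref{thm:gr4toi31_2}, which already expresses $\tfrac{7}{144}\GR_4$ modulo products as $\Alt_8$ applied to a combination of two $\II_{3,1}$-terms and one $\Lic_4$-term in the $\rho_i$ and $\rho_{i,j}$. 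The target of the computation is therefore to show that, modulo products and after $\Alt_8$,
\[
\II_{3,1}\Big(\tfrac{\rho_{1,2}\rho_{3,4}}{\rho_{3,2}\rho_{1,4}},\tfrac{\rho_1}{\rho_{1,4}}\Big)
+2\II_{3,1}\Big(\tfrac{\rho_{1,2}}{\rho_1},\tfrac{\rho_{3,2}}{\rho_{3,4}}\Big)
+6\Lic_4\Big(\tfrac{\rho_1\rho_{3,2}}{\rho_{1,2}\rho_{3,4}}\Big)
+ 2\wI_{3,1}\big(\CR(34|2567),\CR(67|1345)\big)
\]
equals the right-hand side of \eqref{eq:gr4toli4}.

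The first reduction step is to replace $\II_{3,1}$ by $\wI_{3,1}$ plus a known $\Lic_4$-correction. By Proposition~\ref{prop:i31tilde}, $\II_{3,1}(x,y) \modsh \wI_{3,1}(x,y) + \symsix(x,y)$; applying this to the two $\II_{3,1}$-arguments produces exactly the $\symsix$-terms and the $6\Lic_4$-term visible on the right-hand side, leaving us to prove that $\Alt_8$ of
\[
\wI_{3,1}\Big(\tfrac{\rho_{1,2}\rho_{3,4}}{\rho_{1,4}\rho_{3,2}},\tfrac{\rho_1}{\rho_{1,4}}\Big)
+2\,\wI_{3,1}\Big(\tfrac{\rho_{1,2}}{\rho_1},\tfrac{\rho_{3,2}}{\rho_{3,4}}\Big)
+2\,\wI_{3,1}\big(\CR(34|2567),\CR(67|1345)\big)
\]
matches $\Alt_8$ of the two $\fiveterm$-blocks in \eqref{eq:gr4toli4}. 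Here one first rewrites all the cross-ratios $\CR(ab|cdef)$, as well as the arguments $\rho_i/\rho_{1,4}$ etc., in terms of the projective coordinates on the line through $P_7, P_8$: by the geometric interpretation of $\rho_{ijk}$ (and of $\rho_i = \rho_{i,i+1,i+2}$) recorded after Theorem~\ref{thm:i211}, each such quantity becomes a ratio of differences of the six coordinates $\rho_1,\dots,\rho_6$, so everything becomes a function of these six variables. The task is then a pure identity among $\wI_{3,1}$'s and $\fiveterm$'s of rational functions of $\rho_1,\dots,\rho_6$, to be checked modulo products and after $\Alt_8$ over $S_8$.

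The engine for the matching is Theorem~\ref{thm:i31fiveterm} together with the symmetry properties of $\wI_{3,1}$ from Proposition~\ref{prop:i31tilde}. The strategy is to recognize the second slot of each $\wI_{3,1}$-term as (a $6$-fold symmetrized version of) a projected cross-ratio, so that the first-slot argument can be expanded, via the five-term relation for the Bloch group in the guise built into $\fiveterm$, into the five $[x],[y],[\tfrac{1-x}{1-xy}],[1-xy],[\tfrac{1-y}{1-xy}]$ arguments; the $\fiveterm$-notation is precisely designed so that $\fiveterm(z; x, y)$ absorbs such a five-term combination in the second variable modulo products. Concretely: one chooses, for each of the three $\wI_{3,1}$-terms on the left, a substitution of the six coordinates into the five-term pattern so that the resulting $\fiveterm$ expression telescopes, after $\Alt_8$, against the combination written in \eqref{eq:gr4toli4} with coefficients $-1, +\tfrac14, -\tfrac1{12}$ (first block) and $-1, +1, +\tfrac12$ (second block). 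Since $\fiveterm$ is already known (Theorem~\ref{thm:i31fiveterm}, with the explicit $\Lic_4$-form in Appendix~\ref{app:i31}) to reduce to $\Lic_4$'s, and $\symsix$ likewise, the final right-hand side is indeed a combination of $\Lic_4$'s of rational functions of the configuration, as asserted.

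The main obstacle is the combinatorial bookkeeping in this last step: finding the correct $\fiveterm$-arguments and verifying that, after the huge $\Alt_8$, all non-$\fiveterm$, non-$\symsix$ remainders cancel. In practice this is done by computing mod-products symbols: one expands $\mathcal{S}^{\shuffle}$ of both sides as tensors in the prime factors $\rho_i - \rho_j$ (and $\rho_i$), each entry a monomial in at most the six coordinates, and checks the (large but finite) linear identity of $\Alt_8$-orbits of such tensors. The symmetry reductions of $\wI_{3,1}$ — skew in each anharmonic $S_3$ and antisymmetric under swapping the two slots — cut the number of orbits substantially, and the five-term relation is the only nontrivial input needed to close the identity; everything else is the shuffle-regularization and the linearity extension of $\fiveterm$. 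I expect that once the three substitutions into the five-term pattern are fixed, the remaining verification is a finite symbol computation with no further conceptual content, and that is exactly how the proof in Section~\ref{sec:pf_i31} (and the surrounding sections) is carried out.
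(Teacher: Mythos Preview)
Your overall strategy is correct and matches the paper's: start from Theorem~\ref{thm:gr4toi31_2}, convert the two $\II_{3,1}$-terms to $\wI_{3,1}$ via Proposition~\ref{prop:i31tilde} (producing exactly the $\symsix$-terms and the $6\Lic_4$-term on the right), and then reduce the remaining $\wI_{3,1}$-terms to the $\fiveterm$-blocks using five-term relations and the $36$-fold symmetries of $\wI_{3,1}$.

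However, your description of the last step mischaracterizes how the paper actually proceeds. You write that ``in practice this is done by computing mod-products symbols'' and that the verification ``is a finite symbol computation with no further conceptual content.'' The paper does \emph{not} do this. Instead, Section~\ref{sec:pf_gr4toli4} splits the problem into two explicit lemmas (Lemmas~\ref{lem:gr4li4_1} and~\ref{lem:gr4li4_2}) and proves each by a short chain of concrete five-term relation applications, combined with specific permutations in $\mathfrak{S}_8$ to show that unwanted terms either vanish (being fixed by a transposition) or combine. For example, the key move in Lemma~\ref{lem:gr4li4_1} is to swap the slots of $\wI_{3,1}$, then use the factorization $\frac{\rho_{3,4}}{\rho_{3,2}}=\frac{\CR(34|2685)}{\CR(48|7635)}$ to recognize a five-term pattern in cross-ratios; two further five-term relations then finish. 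No symbol expansion is performed.

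Two smaller points: your remark that all cross-ratios $\CR(ab|cdef)$ become ratios of differences of $\rho_1,\dots,\rho_6$ is not correct---cross-ratios such as $\CR(34|2567)$ or $\CR(48|7635)$ involve $7$ or $8$ in non-special positions and are not functions of the $\rho_i$ alone; the paper handles them via explicit determinant identities like~\eqref{eq:gonchtocr}. Also, the five-term expansion in $\fiveterm(z;x,y)$ is in the \emph{second} slot of $\wI_{3,1}$, not the first; the paper uses the swap symmetry $\wI_{3,1}(x,y)\modsh-\wI_{3,1}(y,x)$ to move the argument to be expanded into that slot.
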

The proof is based on the identity from Theorem~\ref{thm:gr4toi31_2},
and will be given in Section~\ref{sec:pf_gr4toli4}.

\subsection{Corollaries}
Let us denote the formal linear combination of the arguments
of $\Lic_4$ in~\eqref{eq:gr4toli4} by~$\quadrat(v_1,\dots,v_8)$
(one can think of $\mathcal{Q}$ as a function from $\Conf{8}{4}$
to $\ZZ[\mathbb{P}^1(\CC)]$). This is an explicit form of the
map~$f_8(4)$ from~\cite{Ga2}, it is also related to the map~$r_8^*(4)$
from~\cite{Go-Ru} (more precisely, it is~$r_8^*(4)$ together with the
correction term used in the proof of Theorem~1.17 in~\cite{Go-Ru}).

Recall that the symbol of the left-hand side of~\eqref{eq:gr4toli4}
is equal, in view of~\eqref{grsymbol}, to $\Alt_{8}f(v_1,\dots,v_7)$
for some $f$.  So by symmetrizing over $9$ points we immediately obtain the following 
`cohomological' functional equation for~$\Li_4$.
\begin{corollary}
    The tetralogarithm function satisfies the following $9$-term relation
	\[\sum_{j=0}^{8}(-1)^j
	\Li_4(\quadrat(v_0,\dots,\widehat{v_j},\dots,v_{8})) = 0\,,\]
    where $v_0,\dots,v_8\in\CC^4$ are vectors in general position.
\end{corollary}
This functional equation is an analogue of the $5$-term
relation for the dilogarithm and of Goncharov's $840$-term
relation for the trilogarithm \cite{Go-ams}.

Moreover, if $v\in\CC^4$ is any non-zero vector,
then the function $\phi\colon\GL_4(\CC)^8\to\RR$ defined by
	\[\phi(g_1,\dots,g_8) = \Li_4(\quadrat(g_1v,\dots,g_8v))\]
defines a measurable $7$-cocycle for $\GL_4(\CC)$,
and, as explained for example in~\cite[\S1.4]{Go1} and in~\cite[\S9.7]{Bo2}, also
defines a continuous cohomology class
in~$H_{{\mathrm{cts}}}^{7}(\GL_4(\CC),\RR)$ (that is independent of~$v$).
The following result follows from Theorem~\ref{thm:gr4toli4}
together with the proofs of Th.~1.17 and Th.~1.2 in the work of
Goncharov and Rudenko~\cite[p.~72-73]{Go-Ru}.
\begin{corollary} \label{cor:borel}
	The cohomology class $[i\phi]$
	in $H_{{\mathrm{cts}}}^{7}(\GL_4(\CC),\RR(3))$
	is a non-zero rational multiple of the Borel
	class $b_4^{(4)}\in H_{{\mathrm{cts}}}^{7}(\GL_4(\CC),\RR(3))$
	(here, as before, $\RR(m)\coloneqq(2\pi i)^m\RR$).
\end{corollary}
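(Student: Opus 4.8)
The plan is to leverage the identity of Theorem~\ref{thm:gr4toli4} together with the machinery of Goncharov and Rudenko~\cite{Go-Ru} to identify the cohomology class of $i\phi$ with (a rational multiple of) the Borel class. First I would recall that Goncharov's construction in~\cite{Go-arak} realizes the Borel class $b_4^{(4)}$ as the class of the cocycle $(g_1,\dots,g_8)\mapsto \GR_4(g_1 v,\dots,g_8 v)$, suitably single-valued; more precisely, one passes from the multi-valued $\GR_4$ to a genuine $\RR(3)$-valued measurable cocycle by applying the single-valued version (this is exactly the passage that underlies the discussion in \cite[\S1.4]{Go1} and \cite[\S9.7]{Bo2}). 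The point is that, at the level of cocycles on $\GL_4(\CC)$, adding an $\Alt_8$-coboundary $\Alt_8 f(v_1,\dots,v_7)$ does not change the continuous cohomology class: such a term depends on only $7$ of the $8$ points, hence is a coboundary for the bar complex computing $H^\bullet_{\mathrm{cts}}(\GL_4(\CC),-)$ (this is the precise sense of `coboundary' spelled out in the paragraph preceding Theorem~\ref{thm:gr4toli4}).

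Next I would use Theorem~\ref{thm:gr4toli4}: modulo products, and hence after applying the single-valued projection and using Zagier's criterion \cite[Prop.~1]{Za-conj} (if $\mathcal{S}^\shuffle$ of a combination of $\Lic_4$'s vanishes then the corresponding combination of $\Li_4$'s is constant), the function $\frac{7}{144}\GR_4 + 2\Alt_8 \wI_{3,1}(\CR(34|2567),\CR(67|1345))$ agrees, up to a constant and up to an $\Alt_8$-coboundary of $\Li_4$-terms built from cross-ratios, with $\Li_4(\quadrat(v_1,\dots,v_8))$. The left-hand side differs from $\frac{7}{144}\GR_4$ by $2\Alt_8\wI_{3,1}(\cdots)$, which is again a function of at most $7$ points (the arguments $\CR(34|2567)$ and $\CR(67|1345)$ each omit some index), hence an $\Alt_8$-coboundary. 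Combining, the cocycle $(g_i)\mapsto \Li_4(\quadrat(g_1v,\dots,g_8v))$ and the cocycle $(g_i)\mapsto \frac{7}{144}\GR_4(g_1v,\dots,g_8v)$ represent the same class in $H^7_{\mathrm{cts}}(\GL_4(\CC),\RR(3))$ up to the normalizing factor $i$ and up to a coboundary; since the latter is $\frac{7}{144}b_4^{(4)}$ by Goncharov's theorem, we conclude that $[i\phi]$ is a non-zero rational multiple of $b_4^{(4)}$.

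The technical heart of the argument — and the step I expect to be the main obstacle — is the bookkeeping that identifies the single-valued cocycle attached to $\GR_4$ with the Borel class with an explicit nonzero rational constant, and checking that every correction term introduced (the $\wI_{3,1}$ term, the $\fiveterm$ and $\symsix$ terms on the right of~\eqref{eq:gr4toli4}) genuinely depends on at most $7$ of the $8$ points and so drops out in cohomology. This is precisely where I would invoke the proofs of \cite[Th.~1.17 and Th.~1.2]{Go-Ru}, pp.~72--73: Goncharov and Rudenko carry out exactly this analysis for their map $r_8^*(4)$, and since $\quadrat$ is $r_8^*(4)$ together with the correction term appearing in their proof of Theorem~1.17, one can cite their conclusion essentially verbatim. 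The only additional input beyond~\cite{Go-Ru} is Theorem~\ref{thm:gr4toli4} itself, which provides the explicit $\Lic_4$-formula (our $\quadrat$) that their argument needs as a hypothesis.
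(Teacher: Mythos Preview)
Your overall strategy matches the paper's: the corollary is stated to follow from Theorem~\ref{thm:gr4toli4} together with the proofs of Theorems~1.17 and~1.2 in~\cite[pp.~72--73]{Go-Ru}, and that is precisely what you invoke.

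There is, however, a genuine confusion in your third paragraph. The $\fiveterm$ and $\symsix$ terms on the right-hand side of~\eqref{eq:gr4toli4} are \emph{not} supposed to be coboundaries, and they are not: for instance $\symsix(\tfrac{\rho_{1,2}\rho_{3,4}}{\rho_{1,4}\rho_{3,2}},\tfrac{\rho_1}{\rho_{1,4}})$ involves $\rho_1,\rho_2,\rho_3,\rho_4$, which together depend on all eight points $v_1,\dots,v_8$. These terms are exactly the constituents of $\quadrat$ (by its definition just before the corollary: $\quadrat$ is the formal linear combination of \emph{all} the $\Lic_4$-arguments appearing on the right of~\eqref{eq:gr4toli4}). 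So there is nothing to check for them, and your proposed check would fail. The only term that must be---and is---a coboundary is the $2\Alt_8\wI_{3,1}(\CR(34|2567),\CR(67|1345))$ on the left, which indeed omits the index~$8$. With that single observation, $\Li_4(\quadrat)$ and (the single-valued version of) $\tfrac{7}{144}\GR_4$ differ by a coboundary plus a constant, and you are done.

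A minor attribution point: the identification of the cocycle built from $\GR_4$ with a nonzero rational multiple of $b_4^{(4)}$ is established in~\cite{Go-Ru}, not in~\cite{Go-arak}. The latter uses the different real-analytic function $\Li_4^G$; the link between the multi-valued $\GR_4$ and the Borel class is part of what Goncharov and Rudenko prove.
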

Due to the validity of the Rank Conjecture for number fields~\cite{BoYa},
this result suffices to compute the
Borel regulator for $K_7(F)$ in terms of \( \Li_4 \), where $F$ is a number field
(for details see~\cite[\S9.4]{Bo2}).
\begin{remark}
    The number of different
    $\mathfrak{S}_8$-orbits of arguments in $\quadrat$ is at most $7N+2M+1$,
    where~$N$ is the number of terms in~$\fiveterm$
    and~$M$ is the number of terms in~$\symsix$. If we use the
    expressions for~$\fiveterm$ and~$\symsix$ given in
    Appendix~\ref{app:i31} below, then we get that~$\quadrat$
    is a sum of~$7\cdot2340 +2\cdot26+1=16433$ orbits.
    A slightly better version of $\fiveterm$ has $246$ terms, so we obtain
    ~$7\cdot246 +2\cdot26+1=1775$ orbits. However, some of
    the $\mathfrak{S}_8$-orbits in the resulting expression turn out to
    coincide, and some of them cancel out after skew-symmetrization
    over~$\mathfrak{S}_8$. A more careful (computer assisted) analysis
    of~$\quadrat$ using techniques similar to those used in~\cite{R}
    gives an expression with~$368$ orbits of $\Lic_4$ arguments.
\end{remark}

\medskip
\section{The Grassmannian polylogarithm in weight 5}
\label{sec:wt5}
Here we are now working with the configuration space $\Conf{10}{5}$
and, as before, we define
    \[\rho_{i}\coloneqq \rho_{\{i,i+1,i+2,i+3\}}^{(9,0)} =
    \frac{\detv{i,i+1,i+2,i+3,9}}{\detv{i,i+1,i+2,i+3,0}}\,,\qquad 1\le i \le 5\,,\]
where for notational reasons we use the digit $0$ for the $10$-th point.
Again we also use the following notation for projected cross-ratios
    \[\CR(abc| defg) \coloneqq
    \frac{\detv{abc df}\detv{abc eg}}{\detv{abc dg}\detv{abc ef}}\,.\]
Finally, we also use the following notation for the projected triple-ratio
(note that Goncharov's triple-ratio is the \( \Alt_6 \)-skew-symmetrization of this, and the single term is denoted by \( \R_3' \) in \cite{Go-ams})
    \[
	\R_3(ab| cde, fgh) \coloneqq
	\frac{\detv{abcdf} \detv{abceh}\detv{abdeg}}{
		\detv{abcdg}\detv{abcef}\detv{abdeh}} \,.
    \]
By analogy with Theorem~\ref{thm:gr4toi31_2} we have the following
expression for $\GR_5$ in terms of~$\II_{4,1}$ and~$\Lic_5$.
\begin{theorem}\label{thm:gr5toi41}
    We have the following identity modulo products
    \begin{equation}\label{eq:gr5i41}
    \begin{split}
    \frac{1}{640}\,\GR_5 \;\modsh\;
    {\Alt}_{10} \Big[
    &\II_{4,1}\Big(\frac{\rho_{2,3}}{\rho_{2,1}},
                  \frac{\rho_{4,3}}{\rho_{4,5}}\Big)
    +2\II_{4,1}\Big(\frac{\rho_{4,5}}{\rho_{4,1}},
                     \frac{\rho_{1,2}\rho_{3,4}}{\rho_{3,2}\rho_{1,4}}\Big)
    +8\Lic_{5}\Big(\frac{\rho_{1,2}\rho_{3,4}}
                        {\rho_{2,3}\rho_{4,5}}\Big) \\
    +&\II_{4,1}\Big(\frac{\rho_{1,2}\rho_{3,4}}
                        {\rho_{3,2}\rho_{1,4}},
    \frac{\rho_1}{\rho_{1,4}}\Big)
    +2\II_{4,1}\Big(\frac{\rho_{1,2}}{\rho_1},
                     \frac{\rho_{3,2}}{\rho_{3,4}}\Big)
    +8\Lic_{5}\Big(\frac{\rho_1\rho_{3,2}}
                        {\rho_{1,2}\rho_{3,4}}\Big)\Big]\,,
    \end{split}
    \end{equation}
    where we denote $\rho_{i,j}=\rho_i-\rho_j$.
\end{theorem}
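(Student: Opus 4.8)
The plan is to prove the identity purely on the level of the mod-products symbol $\mathcal{S}^{\shuffle}$, exactly as in the weight-$4$ analogue (Theorem~\ref{thm:gr4toi31_2}). Thus both sides get mapped into $\bigwedge$-free tensors modulo shuffles via $\widetilde{\Pi}_5\circ\mathcal{S}$, and the claim becomes an identity of explicit $\Alt_{10}$-symmetrized tensors in the determinants $\detv{i_1\dots i_5}$. For the left-hand side I would start from Goncharov's formula~\eqref{grsymbol}, which for $m=5$ gives $\mathcal{S}(\GR_5) = -2(5!)^2\,\Alt_{10}\,\detv{1\dots5}\otimes\detv{2\dots6}\otimes\dots\otimes\detv{5\dots9}$; applying $\widetilde{\Pi}_5$ and dividing by the normalizing constant $640 = \tfrac{2(5!)^2}{something}$—more precisely tracking that $\tfrac{1}{640}\mathcal{S}(\GR_5)$ after projection equals $\tfrac{-(2m-1)}{m!(m-1)!}$ times the object in Theorem~\ref{thm:i211} up to a further reduction—I would reduce to showing that the right-hand side of~\eqref{eq:gr5i41} has the same mod-products symbol as $-\tfrac{(2m-1)}{m!(m-1)!}\GR_m$ for $m=5$, i.e. as $\Alt_{10}\II(0;0,\rho_1,\rho_2,\rho_3,\rho_4;\rho_5)$ via Theorem~\ref{thm:i211}. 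So the real content is a symbol identity between $\Alt_{10}\II(0;0,\rho_1,\dots,\rho_4;\rho_5)$ and the stated combination of $\II_{4,1}$'s and $\Lic_5$'s.

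The key computational step is the reduction of the depth-$4$ iterated integral $\II(0;0,\rho_1,\rho_2,\rho_3,\rho_4;\rho_5)$ to depth $\leq 2$ after the $\Alt_{10}$-symmetrization. Here I would exploit the geometric meaning of the $\rho_i$ (the coordinate on the line through $P_9,P_0$ of the intersection with the hyperplane through $P_i,\dots,P_{i+3}$, indices mod $8$): because the six combinations of cross-ratios and triple-ratios appearing implicitly in $\II_{4,1}$'s arguments are themselves ratios of the $\rho_i$ and of the $\rho_{i,j}=\rho_i-\rho_j$, the symbol of each term is a tensor in $\rho_i$ and $\rho_i-\rho_j$ only. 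I would first expand $\mathcal{S}^{\shuffle}\,\Alt_{10}\II(0;0,\rho_1,\dots,\rho_4;\rho_5)$ using~\eqref{eq:coprod} iterated, then use the known weight-$5$ depth-reduction machinery for iterated integrals (the analogues in weight $5$ of the references~\cite{Dan},~\cite{Dan2},~\cite{Ga},~\cite{Ch} used in weight $4$, together with the $\II_{4,1}$ symmetries) to rewrite everything modulo products and modulo the $\Alt_{10}$-action in terms of $\II_{4,1}$ and $\Lic_5$. The coefficients $1,2,8,1,2,8$ and the precise six arguments are then pinned down by matching tensor coefficients; the split into two ``blocks'' of three terms each mirrors exactly the weight-$4$ structure of Theorem~\ref{thm:gr4toi31_2} (one block coming from each of the two natural ways the $\Alt$ interacts with the $d\log$ factor in~\eqref{eq:gdef}), so I would organize the bookkeeping along that split.

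The main obstacle will be the combinatorial explosion in the $\Alt_{10}$-symmetrization: $10! = 3628800$ permutations acting on depth-$4$ symbols in eight points of $\mathbb{P}^4$ is far beyond hand computation, so in practice the verification is computer-assisted (as the authors indicate for the related weight-$4$ computations). The conceptual difficulty, rather than the bookkeeping, is identifying the correct six arguments in the first place: one must guess the shape of the answer (guided by Theorem~\ref{thm:gr4toi31_2} and Remark~\ref{rem:gr4toi31}, i.e. by the cluster-polylogarithm/Stasheff-polytope picture in weight $2k$ transported to $2k{+}1$) and only then verify it. I expect the proof to proceed by (i) writing down the candidate right-hand side motivated by the weight-$4$ pattern, (ii) computing both mod-products symbols explicitly, and (iii) checking term-by-term equality after $\Alt_{10}$; steps (ii)–(iii) being a finite but large symbolic computation, and step (i) being where the genuine insight lies. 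An alternative, cleaner route—if one wants to avoid the brute-force $\Alt_{10}$—would be to verify the identity on a generic $1$-parameter or $2$-parameter slice of $\Conf{10}{5}$ using the fact that a mod-products symbol identity of projectively-invariant functions that holds generically holds identically, which cuts the computation down substantially while still requiring the correct candidate expression as input.
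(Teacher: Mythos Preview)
Your proposal is essentially correct and aligns with the paper's approach: the paper states that the identity ``can be proved similarly to the proof of Theorem~\ref{thm:gr4toi31_2}'' and that they ``have checked it symbolically using a computer.'' So the core strategy---verify the mod-products symbol identity by explicit computation---is exactly what the paper does.

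Two points of difference worth noting. First, your detour through Theorem~\ref{thm:i211} and subsequent depth-reduction of $\II(0;0,\rho_1,\dots,\rho_4;\rho_5)$ is unnecessary: the paper simply compares $\mathcal{S}^{\shuffle}$ of both sides of~\eqref{eq:gr5i41} directly, using~\eqref{grsymbol} for the left and the coproduct formula for each $\II_{4,1}$ and $\Lic_5$ term on the right. (Your normalization discussion is also muddled; note that $\tfrac{2m-1}{m!(m-1)!}=\tfrac{1}{320}$ for $m=5$, not $\tfrac{1}{640}$, so Theorem~\ref{thm:i211} is not literally an intermediate step here.) Second, for the computational bottleneck the paper uses a cleaner trick than either your brute-force $10!$ expansion or your generic-slice idea: since both sides are $\Alt_{10}$ of a short expression, one expands the symbol of a \emph{single} representative term into tensors of irreducible $\SL_5$-invariants and then reduces each tensor to a canonical form (e.g.\ lexicographically minimal) under the $\mathfrak{S}_{10}$-action. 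This replaces the $10!$-fold blowup by a normal-form computation and is what makes the check practical.
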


\begin{remark}\label{rem:gr5toi41}
	If we define $C_{5}(x_1,\dots,x_6)$ as in the introduction,
    then the first row of \eqref{eq:gr5i41} is \( \Alt_{10}\) equivalent
    to \( C_{5}(\infty,\rho_1,\ldots,\rho_5) \) and the second row
    is \( \Alt_{10} \) equivalent to \( -C_5(\infty,0,\rho_2,\ldots,\rho_5) \).
     In particular, we see that the second line is a specialization of the first.
\end{remark}
This identity can be proved similarly to the proof of
Theorem~\ref{thm:gr4toi31_2}, as it is done in Section~\ref{sec:pf_i31}.
We have checked it symbolically
using a computer. Naively one could write out all $10!\cdot 6$ terms on the
right, and compute the symbol, but since both sides of the identity
are given by $\Alt_{10}$ of a small number of terms, it is much more manageable
to expand the symbol of a single term in the square brackets in terms of
irreducible~$\SL_5$-invariant polynomials, and then compute its canonical form
(say, lexicographically minimal representatives)
modulo the action of~$\mathfrak{S}_{10}$ for each of the tensors.

As for \( \GR_4 \), the non-vanishing of the 2-part of the cobracket
of \( \GR_5 \) shows that one cannot express it in terms of depth~$1$
functions alone. In analogy to the weight~4 case we first look for a `coboundary
correction' with  matching cobracket, which in this case amounts to a linear
combination of functions on configurations of 10 points in $\mathbb{P}^4$, where each
individual term depends on at most 9 of these points.

In order to be able to write a concise formula, we introduce
the following shorthand for the invariant and the anti-invariant under
the swap of the left and right triples inside the triple ratio
$$ \R_3^\pm(abc,de\hskip -1pt f) := [\R_3(abc,de\hskip -1pt f)] \pm [\R_3(de\hskip -1pt f,abc)]\,.$$
(Note that $\;1-\R_3(abc,de\hskip -1pt f)\;$
and $\;1-\R_3(de\hskip -1pt f,abc)\;$ share a nontrivial irreducible
factor.) We also adopt the notation $ \R_3^\pm(ab|cde,fgh)$ in line
with the standard notation for the projected version of the triple
ratio.

\begin{proposition}\label{prop:gr5del}
The following combination has a vanishing $2$-part of the cobracket
	\begin{equation} \label{eq:gr5del}
	\begin{split}
	\frac{1}{640}\,\GR_5 +
	\frac{5}{3} \Alt_{10} \Big[ & 2 \II_{4,1}(\CR(136|2459), \R_3^+(12|345,678))
	 + \II_{4,1}(\CR(346|1279), \R_3^-(12|345,678)) \Big] \,.
	\end{split}
	\end{equation}
\end{proposition}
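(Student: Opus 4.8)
The plan is to verify the vanishing of the $2$-part of the cobracket $\delta_{2,3} + \delta_{3,2}$ applied to the combination in~\eqref{eq:gr5del}, by a direct symbolic computation of the relevant pieces of the coproduct. First I would recall that for any element of the motivic Lie coalgebra in weight~$5$, the $2$-part of the cobracket is the composition of $\delta = \Delta - \Delta^{\mathrm{op}}$ with projection onto $\mathcal{L}_2 \wedge \mathcal{L}_3$; equivalently, since $\Li_5$ (depth~$1$) is already known to have vanishing $2$-part, it suffices to work modulo products and modulo depth~$1$ elements. By Theorem~\ref{thm:i211} (applied with $m=5$, using~\eqref{eq:i211} for $\GR_5$) and the more convenient depth-$\le 2$ rewriting in Theorem~\ref{thm:gr5toi41}, we already know that $\frac{1}{640}\GR_5$ is $\modsh$ an explicit $\Alt_{10}$-sum of six terms of the form $\II_{4,1}(\cdot,\cdot)$ and $\Lic_5(\cdot)$. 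So the whole expression in~\eqref{eq:gr5del} is an explicit, finite combination of $\II_{4,1}$'s and $\Lic_5$'s in the projected cross-ratios $\CR(\cdot)$, the triple ratios $\R_3^\pm(\cdot)$, and the $\rho_i$'s.

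The next step is to compute $\delta_{2,3}$ (and hence $\delta_{3,2}$) of $\II_{4,1}(x,y)$ as an element of $\mathcal{L}_2 \wedge \mathcal{L}_3$. Using~\eqref{eq:coprod}, the weight-$(2,3)$ and weight-$(3,2)$ parts of the coproduct of $\II_{4,1}(x,y) = \II(0;x,0,0,y;1)$ are extracted by choosing the interior subsequences $\{x_{i_1},\dots,x_{i_k}\}$ of size $2$ and $3$ respectively; each resulting term is a product of a lower-weight iterated integral with several $\log$'s, and after projecting to $\mathcal{L}_\bullet$ (killing products) one is left with short combinations of $\{w\}_2$-type and $\{w\}_3$-type terms with explicit rational arguments $w$ built from $x,y,1-x,1-y,1-xy,\dots$. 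One knows classically (e.g.\ from Goncharov's computations of $\delta \Lic_{n_1,n_2}$) that $\delta_{2,3}\II_{4,1}(x,y)$ is a sum of terms $\{f\}_2 \wedge \{g\}_3$ for explicit $f,g$; the essential input is just this formula, plus the analogous trivial fact that $\delta$ of a depth-$1$ $\Lic_5$ has no $\mathcal{L}_2\wedge\mathcal{L}_3$ component. Substituting the specific arguments appearing in~\eqref{eq:gr5del} (and in the $\GR_5$ formula), applying the functional equations $\{x\}_2 \equiv -\{1-x\}_2 \equiv \ldots$ and the five-term relation to reduce the $\mathcal{L}_2$-slots, and the known functional equations / $840$-term relation to reduce the $\mathcal{L}_3$-slots, the claim becomes a linear-algebra identity among wedge tensors.

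Concretely, I would: (1) expand each of the $\Alt_{10}$-terms in~\eqref{eq:gr5del} into its $\delta_{2,3}+\delta_{3,2}$ image, keeping track of signs under the alternation; (2) rewrite every $\mathcal{L}_2$-argument and every $\mathcal{L}_3$-argument in terms of the determinants $\Delta(\cdots)$ and reduce to a canonical form (e.g.\ lexicographically minimal $\mathrm{Alt}_6$/$\mathrm{Alt}_5$ representatives for the triple ratios and cross-ratios, as is done for the symbol computations elsewhere in the paper); (3) collect coefficients of each distinct wedge $\{f\}_2 \wedge \{g\}_3$ after the $\mathfrak{S}_{10}$-alternation and check that they all vanish. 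The coefficient $\frac53$ and the relative weights $2$ and $1$ on the two $\II_{4,1}$-terms are exactly what is forced by this cancellation, and they are found by setting up the linear system and solving it. I expect the main obstacle to be step~(2)–(3): the $\mathcal{L}_3$-part of the cobracket of $\GR_5$ involves Goncharov's triple ratios, whose classes in $\mathcal{L}_3$ are only tractable modulo the (large) set of $\Li_3$ functional equations, so matching them against the $\R_3^\pm$-terms coming from the correction requires the nontrivial identity that a suitable combination of triple ratios equals a combination of $\Li_3$'s in cross-ratios — i.e.\ one must use (a projected, weight-$3$ shadow of) the known reduction of the Grassmannian trilogarithm. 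Managing this reduction, and doing the bookkeeping of the $\Alt_{10}$ signs, is where essentially all the work lies; once the $\mathcal{L}_2$ and $\mathcal{L}_3$ slots are both in canonical form the final cancellation is a finite (computer-checkable) verification. In the write-up I would present the key coproduct formula for $\delta_{2,3}\II_{4,1}$ explicitly, state the resulting wedge identity, and indicate that its verification is carried out symbolically, as with the other symbol-level identities in the paper.
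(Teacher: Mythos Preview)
Your proposal is correct and is essentially the approach the paper takes: no written-out proof is given for this proposition, it being treated (like Theorem~\ref{thm:gr5toi41}) as a direct symbolic check, and your outline --- substitute the depth-$2$ expression for $\GR_5$ from Theorem~\ref{thm:gr5toi41}, compute the $\mathcal{L}_2\wedge\mathcal{L}_3$ component of the cobracket of each $\II_{4,1}$-term via~\eqref{eq:coprod}, reduce to canonical $\mathfrak{S}_{10}$-representatives, and verify cancellation --- is exactly that check. One remark: your anticipated obstacle involving the $840$-term relation or the reduction of the Grassmannian trilogarithm is more than is actually needed; in practice the vanishing in $\mathcal{L}_2\wedge\mathcal{L}_3$ is tested by pushing both slots to the symbol level (applying $\mathcal{S}^\shuffle$ to the $\Lic_3$-factor as well), so the verification becomes a finite linear-algebra identity among tensors of $\SL_5$-invariant polynomials, with no nontrivial $\Lic_3$ functional equations required.
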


According to the conjectured structure of the motivic Lie coalgebra
(of a field) in weight 5 as predicted by
Goncharov (see e.g.~\cite{Go1}, \cite{Go-ams}), this combination should
actually lie in depth~1, and we expect the following relationship
with the Borel class $b_5^{(5)}$ in weight~5.

\begin{conjecture} \label{conj:gr5del}
    \begin{enumerate}
    \item[(i)] There exists a formal linear combination $\quadrat_5(v_1,\dots,v_{10})$ of rational
    functions on $\Conf{10}{5}$
    such that \[\Lic_5(\quadrat_5(v_1,\dots,v_{10}))\]  equals \eqref{eq:gr5del}, modulo products.

    \item[(ii)] Assuming~(i) holds, the function $\phi\colon\GL_5(\CC)^{10}\to\RR$ defined by
    \[\phi(g_1,\dots,g_{10}) = \Li_5(\quadrat_5(g_1v,\dots,g_{10}v))\]
    is a bounded measurable $9$-cocycle for $\GL_5(\CC)$,
    whose corresponding continuous cohomology class is a
    non-zero rational multiple of the Borel class
    $b_5^{(5)}\in H_{{\mathrm{cts}}}^{9}(\GL_5(\CC),\RR)$.
    \end{enumerate}
\end{conjecture}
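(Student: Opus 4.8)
The plan is to deduce part~(i) from the (open) structural conjecture for the weight-$5$ motivic Lie coalgebra of \cite[\S4.5,\,\S4.8]{Go-ams}, and then to deduce part~(ii) from~(i) by the same formal argument as in weight~$4$, using the template provided by Theorem~\ref{thm:gr4toli4} and Corollary~\ref{cor:borel}.

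For part~(i), the unconditional input is Proposition~\ref{prop:gr5del}: the combination \eqref{eq:gr5del} has vanishing $2$-part of the cobracket, i.e.\ $\delta$ sends it into $\mathcal{L}_1\wedge\mathcal{L}_4$. What is conjectural (Conjecture~1.20 and \S1.6 of \cite{Go-ams}) is that this vanishing is the only obstruction to a weight-$5$ element being a $\QQ$-linear combination of classical $5$-logarithms; granting it, \eqref{eq:gr5del} is congruent modulo products to $\sum_j\nu_j\Lic_5(f_j)$ for suitable $\nu_j\in\QQ$ and rational functions $f_j$ on $\Conf{10}{5}$ — this is exactly the conditional statement established, via an explicit cobracket decomposition, in Proposition~\ref{prop:gr5cobdecomposition}. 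To extract the explicit $\quadrat_5$ I would then follow the weight-$4$ argument verbatim: use Theorem~\ref{thm:gr5toi41} to rewrite $\GR_5$ modulo products as $\Alt_{10}$ of a handful of $\II_{4,1}$- and $\Lic_5$-terms, expand the mod-products symbol of \eqref{eq:gr5del} into Plücker coordinates, make an $\Alt_{10}$-symmetric Ansatz for $\quadrat_5$ built from projected cross-ratios $\CR(abc|defg)$, triple-ratios $\R_3(ab|cde,fgh)$ and products thereof, and solve the resulting linear system so that $\mathcal{S}^{\shuffle}\Lic_5(\quadrat_5)$ matches the mod-products symbol of \eqref{eq:gr5del} — the structural conjecture guarantees solvability. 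Finally I would lift this mod-products identity to an identity of single-valued functions by the rigidity property of single-valued polylogarithms (\cite[Prop.~1]{Za-conj}, extended to Grassmannian and multiple polylogarithms as in the proof of Theorem~\ref{thm:gr4toli4} and in \cite{Go-Ru}): a weight-$5$ single-valued function built from polylogarithms is determined by its mod-products symbol up to products of lower-weight single-valued polylogarithms and a constant, both of which are disposed of exactly as in weight~$4$, the constant vanishing by homogeneity. This gives the $\quadrat_5$ of~(i).

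For part~(ii), assuming~(i), I would set $\phi(g_1,\dots,g_{10})=\Li_5(\quadrat_5(g_1v,\dots,g_{10}v))$. Since $\Li_5$ is bounded and continuous on $\mathbb{P}^1(\CC)$ and $\quadrat_5$ is rational, $\phi$ is a bounded measurable homogeneous $9$-cochain for $\GL_5(\CC)$; it is a cocycle because of the functional equation
\[
\sum_{i=0}^{10}(-1)^i\,\Li_5\!\big(\quadrat_5(v_0,\dots,\widehat{v_i},\dots,v_{10})\big)=0 \,,
\]
which I would derive from Proposition~\ref{prop:grassfeq}(i) (the alternating sum of $\GR_5$ over an $11$-point configuration is constant) together with the fact that the correction term in \eqref{eq:gr5del} is an $\Alt_{10}$ of functions each involving at most $9$ of the $10$ points — one checks directly that the labels occurring in $\CR(136|2459)$ and $\R_3^{\pm}(12|345,678)$, and likewise in the second term, exhaust only $\{1,\dots,9\}$ — so it is killed by $\Alt_{11}$, while the residual constant vanishes by the same degeneration argument as for the weight-$4$ corollary. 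Hence $\phi$ defines a class in $H_{\mathrm{cts}}^{9}(\GL_5(\CC),\RR)$, independent of $v$ (cf.\ \cite[\S1.4]{Go1}, \cite[\S9.7]{Bo2}). By Goncharov's theorem \cite{Go-arak} the single-valued Grassmannian $5$-logarithm represents a non-zero rational multiple of $b_5^{(5)}$; by~(i), $\Li_5(\quadrat_5)$ differs from $\tfrac1{640}$ times that function by a coboundary (the correction term, being an $\Alt_{10}$ of $\le 9$-point functions, lies in the image of the coboundary operator) together with products of lower-weight polylogarithms, which are cohomologically trivial. Therefore $[i\phi]$ is a non-zero rational multiple of $b_5^{(5)}$ — the weight-$5$ adaptation of Corollary~\ref{cor:borel}, following the proofs of \cite[Th.~1.17,\,Th.~1.2]{Go-Ru}.

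The hard part is part~(i), and specifically its reliance on the structure of the weight-$5$ motivic Lie coalgebra: Goncharov's conjecture there is wide open, which is precisely why the present statement must remain a conjecture. Even granting it, producing $\quadrat_5$ explicitly is a formidable computation — the weight-$4$ analogue $\quadrat$ already involves hundreds of $\mathfrak{S}_8$-orbits of arguments — and the rigidity step from a ``modulo products'' identity to an identity of single-valued functions must be carried out carefully, since that is where the contributions of products of lower-weight polylogarithms have to be pinned down. Part~(ii), by contrast, is essentially formal once~(i) is in hand, being a transcription of the weight-$4$ argument.
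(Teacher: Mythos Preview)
The statement is a \emph{conjecture}; the paper does not prove it, and your proposal correctly acknowledges this — you explicitly identify Goncharov's structural conjecture on the weight-$5$ motivic Lie coalgebra as the open input for part~(i), and treat part~(ii) as formally following from~(i) by the weight-$4$ template. As a high-level conditional roadmap this is accurate.

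Where your sketch diverges from the paper's own supporting evidence (Proposition~\ref{prop:gr5cobdecomposition}) is in the mechanism for the reduction to $\Lic_5$. You propose to ``make an $\Alt_{10}$-symmetric Ansatz for $\quadrat_5$ built from projected cross-ratios and triple-ratios and solve the resulting linear system'', with Goncharov's conjecture guaranteeing solvability. The paper takes a more structured route: it rewrites \eqref{eq:gr5del} in terms of $\II_{4,1}^+$ via \eqref{eqn:i41toi41p} and splits the result into four pieces $A,B,C,D$, where $A,B,C$ are each explicitly exhibited (Lemmas~\ref{lem:gr5sum1}--\ref{lem:gr5sum3}) as $\wI_{4,1}^+$ applied to $\Lic_2$ functional equations in the first slot and $\Lic_3$ functional equations in the second slot (using concrete five-term relations and the new $\Lic_3$ identity of Lemma~\ref{lem:gr5li3}), while $D$ is a coboundary. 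The point is that Goncharov's conjecture predicts $\II_{4,1}^+(x,y)$ reduces to $\Lic_5$'s precisely under $\Lic_2$-relations in~$x$ and $\Lic_3$-relations in~$y$, so the paper reduces Conjecture~\ref{conj:gr5del}(i) to this targeted expectation rather than a generic linear-algebra search. Your Ansatz approach might work in principle, but gives no indication of feasibility and less structural insight; the paper's decomposition is what makes the conditional claim precise and constructive.

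One caution on your argument for~(ii): you invoke \cite{Go-arak} for the claim that the Grassmannian $5$-logarithm represents $b_5^{(5)}$, but that paper concerns the single-valued $\Li_m^{G}$, a different construction from the multi-valued $\GR_m$ used here. In weight~$4$, Corollary~\ref{cor:borel} instead rests on the framework of \cite{Go-Ru} (Theorems~1.17 and~1.2 there), and a weight-$5$ analogue of that framework is not yet available. You do cite \cite{Go-Ru} at the end, but the step connecting $\GR_5$ to the Borel regulator is not automatic from \cite{Go-arak} alone; this is another genuinely open ingredient, not merely a transcription of the weight-$4$ case.
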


Explicitly, in view of~\eqref{eq:gr5i41} the combination
\begin{equation}\label{eqn:gr5pluscob}
\begin{split}
	\Alt_{10} \Big[ &\II_{4,1}\Big(\frac{\rho_{2,3}}{\rho_{2,1}},
	\frac{\rho_{4,3}}{\rho_{4,5}}\Big)
	+2\II_{4,1}\Big(\frac{\rho_{4,5}}{\rho_{4,1}},
	\frac{\rho_{1,2}\rho_{3,4}}{\rho_{3,2}\rho_{1,4}}\Big)
	+\II_{4,1}\Big(\frac{\rho_{1,2}\rho_{3,4}}
	{\rho_{3,2}\rho_{1,4}},
	\frac{\rho_1}{\rho_{1,4}}\Big)
	+2\II_{4,1}\Big(\frac{\rho_{1,2}}{\rho_1},
	\frac{\rho_{3,2}}{\rho_{3,4}}\Big)
 \\
 	& + \frac{10}{3} \II_{4,1}(\CR(136|2459), \R_3^+(12|345,678))
 	+ \frac{5}{3} \II_{4,1}(\CR(346|1279), \R_3^-(12|345,678))
	 \Big] \,
\end{split}
\end{equation}
should be reducible to purely \( \Lic_5 \) terms.

In order to state this more precisely, we will decompose the above combination into pieces expressed in terms of \( \II_{4,1}^+(x,y) \),
where
    \begin{equation}\label{eq:i41plusdef}
    \II_{4,1}^+(x,y) \coloneqq
    \frac{1}{2} \big( \II_{4,1}(x,y) + \II_{4,1}(x,y^{-1}) \big) \,.
    \end{equation}
The conjectural behaviour of $\II_{4,1}^{+}$ under \( \Lic_2 \)
functional equations in~$x$, and under~$\Lic_3$ functional equations
in~$y$ should play a key part in any explicit reduction of~$\GR_5$ to~$\Lic_5$.
By a conjecture of Goncharov about the structure of the motivic Lie coalgebra
in weight~$5$ (\cite[\S4.5,~\S4.8]{Go-ams}), we
expect \( \II_{4,1}^+(x,y) \) to reduce to \( \Lic_5 \) under
dilogarithm functional equations in \( x \), and
also under trilogarithm functional equations in \( y \)
(that is, $\II_{4,1}^+(x,y)$ modulo $\Lic_5$ should satisfy the same relations as
$\Lic_2(x)\otimes\Lic_3(y)$).

We can express \( \II_{4,1}(x,y) \) in terms of \( \II_{4,1}^+ \) as follows
\begin{equation}\label{eqn:i41toi41p}
	\II_{4,1}(x,y) \modsh \II_{4,1}^+(x, y) + \II_{4,1}^+(y, x) + \frac{1}{2} \Lic_5(x) + \frac{1}{2} \Lic_5(y) + 2 \Lic_5(x y) \,.
\end{equation}
Let us give some partial results supporting the conjectural behavior of~$\II_{4,1}^{+}$. By analogy to Theorem~\ref{thm:i31syms} for \( \II_{3,1} \)
we have the following.
\begin{theorem} Modulo products and explicit \( \Lic_5 \) terms, the
function \( I_{4,1}^+(x,y) \) satisfies the dilogarithm $6$-fold symmetries
in \( x \), and the \( \Lic_3 \) inversion and three-term relation in \( y \):
	\begin{enumerate}
		\item \( \II_{4,1}^+(x,y) + \II_{4,1}^+(x^{-1},y) = 0 \Mod{$\Lic_5$, products} \),
		\item \( \II_{4,1}^+(x,y) + \II_{4,1}^+(1-x,y) = 0 \Mod{$\Lic_5$, products} \),
		\item \( \II_{4,1}^+(x,y) - \II_{4,1}^+(x,y^{-1}) = 0 \Mod{$\Lic_5$, products} \), and
		\item \( \II_{4,1}^+(x,y) + \II_{4,1}^+(x,1-y)+ \II_{4,1}^+(x,1-y^{-1})- \II_{4,1}^+(x,1) = 0 \Mod{$\Lic_5$, products} \).
	\end{enumerate}
\end{theorem}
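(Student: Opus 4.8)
The plan is to prove all four identities at the level of the mod-products symbol $\mathcal{S}^{\shuffle}$: since $\modsh$ means exactly equality of mod-products symbols, it suffices to show that in each item the combination on the left has the same $\mathcal{S}^{\shuffle}$ as an explicit $\QQ$-linear combination of $\Lic_5$'s — so, in contrast with Conjecture~\ref{conj:gr5del}, no conjecture about the motivic Lie coalgebra enters. Item~(3) is immediate: by construction $\II_{4,1}^{+}(x,y)=\tfrac12\big(\II_{4,1}(x,y)+\II_{4,1}(x,y^{-1})\big)$ is symmetric under $y\mapsto y^{-1}$, so $\II_{4,1}^{+}(x,y)-\II_{4,1}^{+}(x,y^{-1})=0$ identically, with no $\Lic_5$ terms needed. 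The content is therefore in (1), (2) and (4).

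For these three, the guiding principle is that $\II_{4,1}^{+}(x,y)$ should behave, modulo $\Lic_5$ and products, like $\Lic_2(x)\otimes\Lic_3(y)$. Concretely, a direct (if somewhat fiddly) computation with the coproduct formula~\eqref{eq:coprod} applied to $\II(0;x,0,0,0,y;1)$ and its $y\mapsto y^{-1}$ companion shows that the $\mathcal{L}_2\wedge\mathcal{L}_3$-component of the cobracket of $\II_{4,1}^{+}(x,y)$ is a rational multiple of $\{x\}_2\wedge\{y\}_3$ (this is the point of the symmetrization defining $\II_{4,1}^{+}$). The operation in each item annihilates this component: for (1) and (2) via the dilogarithm inversion $\{x\}_2+\{x^{-1}\}_2=0$ and reflection $\{x\}_2+\{1-x\}_2=0$, since e.g.\ $(\{x^{-1}\}_2+\{x\}_2)\wedge\{y\}_3=0$; and for (4) via the trilogarithm three-term relation, which on mod-products symbols reads $\Lic_3(y)+\Lic_3(1-y)+\Lic_3(1-y^{-1})\modsh\Lic_3(1)$, so that $\{x\}_2\wedge\big(\{y\}_3+\{1-y\}_3+\{1-y^{-1}\}_3-\{1\}_3\big)=0$. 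The remaining $\mathcal{L}_1\wedge\mathcal{L}_4$-component is controlled by Theorem~\ref{thm:i31syms}: projecting $\mathcal{S}^{\shuffle}\II_{4,1}^{+}(x,y)$ onto its last tensor slot exhibits the $x$-dependence through the symbol of $\II_{3,1}$, so the same $\Lic_2$-relations (resp., for (4), the $\Lic_3$-relations) reduce the weight-$4$ coefficient to $\Lic_4$, whose product with a $\log$ is killed by $\widetilde\Pi_5$. Hence each of the three combinations is ``of depth $\le 1$'', i.e.\ its mod-products symbol lies in the $\QQ$-span of the $\mathcal{S}^{\shuffle}\Lic_5(f)$.

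To make this explicit one then (a) records $\mathcal{S}^{\shuffle}\II_{4,1}(x,y)$ — either directly from~\eqref{eq:coprod}, or from the standard symbol of the multiple polylogarithm $\Lic_{4,1}$ using $\II_{4,1}(x,y)=\Lic_{4,1}(y/x,\,1/y)$ — and hence $\mathcal{S}^{\shuffle}\II_{4,1}^{+}(x,y)$; (b) for each of (1), (2), (4), forms the combination, computes its $\mathcal{S}^{\shuffle}$, and solves a finite linear system over $\QQ$ to write it as $\sum_j\nu_j\,\mathcal{S}^{\shuffle}\Lic_5(f_j)$, taking the $f_j$ from an ansatz of monomials in $x,1-x,y,1-y,1-xy$ and their anharmonic transforms — the shapes already appearing in the weight-$4$ identities of Theorem~\ref{thm:i31syms} suffice after the obvious upgrade in weight. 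The resulting $\Lic_5$ terms can then be listed, as is done for the weight-$4$ analogues.

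The main obstacle is the explicit determination and verification in step (b): one must check that the ansatz of $\Lic_5$-arguments is rich enough for the linear system to be solvable, and then verify the resulting symbol identity. This is a large but entirely finite computation on weight-$5$ tensors modulo shuffle products — expanding each symbol into irreducible polynomial letters, reducing to a canonical form, and performing linear algebra over $\QQ$ — which we carry out with computer assistance, cross-checked numerically via the single-valued specialization (the corresponding combinations of $\Li_5$ must then be constant, cf.\ \cite{Za-conj}), exactly as for the other explicit identities in the paper.
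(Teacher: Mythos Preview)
Your overall strategy --- verify everything at the level of $\mathcal{S}^{\shuffle}$, with item~(3) immediate from the definition and items~(1), (2), (4) handled by an explicit symbol computation and linear algebra over~$\QQ$ --- is correct and is exactly what the paper does. The paper's proof (in Appendix~\ref{app:i41}) simply exhibits the explicit $\Lic_5$ combinations and notes that each identity is checked on the mod-products symbol, citing the relevant results in~\cite{Ch} (the inversion relation for~(1), and Theorems~7.4.6 and~7.4.17 there for~(2) and~(4)). Your step~(b) is a self-contained rederivation of those same identities.

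However, your second paragraph --- the cobracket heuristic --- is not a proof, and the ``Hence'' is a genuine non-sequitur. You argue that the $\mathcal{L}_2\wedge\mathcal{L}_3$-component of the cobracket of the combination vanishes (correct), and then that the $\mathcal{L}_4\wedge\mathcal{L}_1$-component reduces, via the $\II_{3,1}$ symmetries, to terms of the form $\{\cdot\}_4\wedge\log$. But from this you \emph{cannot} conclude that the mod-products symbol lies in the $\QQ$-span of the $\mathcal{S}^{\shuffle}\Lic_5(f)$: that implication is precisely (the weight-$5$ case of) Goncharov's conjecture on the motivic Lie coalgebra, which you explicitly said you do not want to invoke. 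Vanishing of the $2$-part of~$\delta$ is a necessary obstruction, not a sufficient criterion. Also, the sentence ``whose product with a $\log$ is killed by $\widetilde\Pi_5$'' is not right: $\{z\}_4\wedge z$ is exactly the cobracket of $\Lic_5(z)$, not something that vanishes under the projector. So this paragraph should be demoted to motivation (``why one expects an ansatz of $\Lic_5$'s to succeed''), and the logical weight of the argument carried entirely by step~(b), where the explicit $\Lic_5$ terms are found and the symbol identity verified. Once you do that, your proof coincides with the paper's.
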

Explicit expressions for these identities are given in Appendix~\ref{app:i41}.
Additionally, in~\cite{CGR} we have established the~$5$-term relation
for $\II_{4,1}^{+}(x,1)=\II_{4,1}(x,1) $ in $x$ (modulo $\Lic_5$ and products), or rather the Nielsen polylogarithm \( S_{3,2}(x) \modsh I_{4,1}(x,1) + 4 \Lic_5(x) \),
although we will not use this.
Analogously to the weight~$4$ case it is convenient to introduce the
following symmetrization of~$\II_{4,1}^{+}$.
\begin{definition}
    Let us define
    \[\wI_{4,1}^+(x,y) \coloneqq
    \frac{1}{6}\sum_{\sigma\in\mathfrak{S}_3}
    \sgn(\sigma)\II_{4,1}^+(x^{\sigma},y)
    \,,\]
    where~$x^{\sigma}$ denotes elements of
    $\{x,1/x,1-x,1-1/x,1/(1-x),x/(x-1)\}$ (the anharmonic group)
    corresponding to $\sigma\in\mathfrak{S}_3$ (under some isomorphism).
\end{definition}

\begin{proposition} \label{prop:i41tilde}
	The combination
	\[\II_{4,1}^+(x,y)-\wI_{4,1}^+(x,y) \]
	is equal modulo products to an explicit sum of \( \Lic_5 \) terms.  Moreover, the function $\wI_{4,1}^+(x,y)$ satisfies
	\begin{align*}
	\wI_{4,1}^+(x^{\sigma},y^{\pm 1}) \;&\modsh\;
	\sgn(\sigma)\wI_{4,1}^+(x,y) \,.
	\end{align*}
\end{proposition}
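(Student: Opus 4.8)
The plan is to reduce both assertions to the $6$-fold symmetry relations satisfied by $\II_{4,1}^+$ in its first variable — the inversion $x\mapsto1/x$ and reflection $x\mapsto1-x$ relations of the Theorem above — together with the observation that the symmetry $y\leftrightarrow y^{-1}$ is already built into the definition of $\II_{4,1}^+$. Throughout one uses that the six rational functions $\{x,1/x,1-x,1-1/x,1/(1-x),x/(x-1)\}$ form a single orbit of the anharmonic group $\mathfrak{S}_3$ acting on $\mathbb{P}^1$, with trivial stabiliser at a generic point, so that evaluation at $x$ identifies this orbit with $\mathfrak{S}_3$; under the labelling $x^{\sigma}$ this identification is compatible with composition, so $(x^{\sigma})^{\tau}=x^{\sigma\tau}$, and for fixed $\tau$ the map $\sigma\mapsto\sigma\tau$ permutes $\mathfrak{S}_3$ with $\sgn(\sigma\tau)=\sgn(\sigma)\sgn(\tau)$.

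The first step is to upgrade the two generating relations of the Theorem above — each of which changes $\II_{4,1}^+$ by a sign and an explicit $\Lic_5$-combination, recorded in Appendix~\ref{app:i41} — to a statement for the whole group. Writing an arbitrary $\sigma\in\mathfrak{S}_3$ as a word of length $k$ in the two generators and iterating, one obtains
\[
\II_{4,1}^+(x^{\sigma},y)\;\modsh\;\sgn(\sigma)\,\II_{4,1}^+(x,y)+R_{\sigma}(x,y)
\]
for an explicit $\Lic_5$-combination $R_{\sigma}$ with $R_{e}=0$, the sign being $(-1)^{k}=\sgn(\sigma)$ since each generator is a transposition; any word representing $\sigma$ gives the same $R_{\sigma}$ modulo products, which is all that is needed. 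Feeding this into the definition of $\wI_{4,1}^+$, and using that the identity contributes $\tfrac16\II_{4,1}^+(x,y)$ while $\tfrac16\sum_{\sigma}\sgn(\sigma)^{2}=1$, we obtain
\begin{align*}
\II_{4,1}^+(x,y)-\wI_{4,1}^+(x,y)
&=\II_{4,1}^+(x,y)-\frac16\sum_{\sigma\in\mathfrak{S}_3}\sgn(\sigma)\,\II_{4,1}^+(x^{\sigma},y)\\
&\modsh -\frac16\sum_{\sigma\in\mathfrak{S}_3}\sgn(\sigma)\,R_{\sigma}(x,y)\,,
\end{align*}
which is the asserted explicit sum of $\Lic_5$ terms.

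Finally, the symmetry of $\wI_{4,1}^+$ follows with little further work. The behaviour under $x\mapsto x^{\tau}$ is purely formal: using $(x^{\sigma})^{\tau}=x^{\sigma\tau}$ and reindexing the sum by $\rho=\sigma\tau$ one gets $\wI_{4,1}^+(x^{\tau},y)=\tfrac16\sum_{\rho}\sgn(\rho\tau^{-1})\II_{4,1}^+(x^{\rho},y)=\sgn(\tau)\,\wI_{4,1}^+(x,y)$. The invariance under $y\mapsto y^{-1}$ needs no input from the Theorem at all, since directly from the definition of the $+$-part $\II_{4,1}^+(x,y^{-1})=\tfrac12\big(\II_{4,1}(x,y^{-1})+\II_{4,1}(x,y)\big)=\II_{4,1}^+(x,y)$ identically, whence $\wI_{4,1}^+(x^{\sigma},y^{-1})=\wI_{4,1}^+(x^{\sigma},y)$; combining the two gives $\wI_{4,1}^+(x^{\sigma},y^{\pm1})\modsh\sgn(\sigma)\,\wI_{4,1}^+(x,y)$. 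The argument runs entirely parallel to the weight-$4$ case, Proposition~\ref{prop:i31tilde}, with Theorem~\ref{thm:i31syms} replaced by the Theorem above; the only real obstacle is the bookkeeping of the error terms $R_{\sigma}$ through the iteration, keeping them explicit and finite, which is unproblematic precisely because the two generating relations come with their $\Lic_5$ correction terms written out in Appendix~\ref{app:i41}, so that no $\Lic_5$ functional equation ever has to be invoked to close the computation.
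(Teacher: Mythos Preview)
Your proposal is correct and follows exactly the approach implied by the paper, which states the proposition without an explicit proof since it follows directly from the preceding theorem on the $6$-fold symmetries of $\II_{4,1}^{+}$ and its $y\mapsto y^{-1}$ invariance, in complete analogy with Proposition~\ref{prop:i31tilde}. Your write-up spells out the (routine) averaging and reindexing arguments that the paper leaves to the reader.
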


We now claim that, after converting \( \II_{4,1} \) to \( \II_{4,1}^+ \) via \eqref{eqn:i41toi41p}, the combination in \eqref{eqn:gr5pluscob} breaks up into 3 non-trivial subsums which are combinations of \( \Lic_2 \) and \( \Lic_3 \) functional equations, and a piece that constitutes a trivial coboundary.
\begin{proposition} \label{prop:gr5cobdecomposition}
	The expression~\eqref{eq:gr5del} breaks up as follows
    	\begin{equation} \label{eq:propgr5}
    	\eqref{eq:gr5del} \;\modsh\;
        \Alt_{10}[A+B+C+D]+\text{\normalfont explicit $\Lic_5$'s}\,,
    	\end{equation}
    where
            \begin{align*}
        A ={} &\II_{4,1}^+\Big(\frac{\rho _{2,3}}{\rho _{2,1}},\frac{\rho _{4,3}}{\rho_{4,5}}\Big)
        -\II_{4,1}^+\Big(\frac{\rho _{4,3}}{\rho _{4,5}},\frac{\rho _{2,3}}{\rho _{2,1}}\Big)\,, \\[1ex]
        B ={} &2\II_{4,1}^+\Big(\frac{\rho _{1,2} \rho _{3,4}}{\rho _{1,4} \rho _{3,2}},\frac{\rho _{4,5}}{\rho_{4,1}}\Big)\,,	\\[1ex]
        C ={} &
        2 \II_{4,1}^+\Big(\frac{\rho _{4,3}}{\rho _{4,5}},\frac{\rho _{2,3}}{\rho_{2,1}}\Big)
        +2 \II_{4,1}^+\Big(\frac{\rho _{4,5}}{\rho _{4,1}},\frac{\rho _{1,2} \rho _{3,4}}{\rho	_{1,4} \rho _{3,2}}\Big)
        +\II_{4,1}^+\Big(\frac{\rho _1}{\rho _{1,4}},\frac{\rho _{1,2} \rho _{3,4}}{\rho _{1,4}\rho _{3,2}}\Big)
        +2\II_{4,1}^+\Big(\frac{\rho _{1,2}}{\rho _1},\frac{\rho _{3,2}}{\rho_{3,4}}\Big)
        \\&
        -\frac{4}{3} \II_{4,1}^+\Big(\frac{\rho _{3,2}}{\rho _{3,4}},\frac{\rho_{1,2}}{\rho _1}\Big)
        + \frac{5}{3} \II_{4,1}^+(\CR(346|1279),\R_3^-(12|345,678)) \,, \\[1ex]
        D ={} & \II_{4,1}^+\Big(\frac{\rho _{1,2} \rho _{3,4}}{\rho _{1,4}\rho _{3,2}},\frac{\rho _1}{\rho _{1,4}}\Big)
        +\frac{10}{3}\II_{4,1}^+\Big(\frac{\rho _{3,2}}{\rho_{3,4}},\frac{\rho _{1,2}}{\rho _1}\Big)
        + \frac{10}{3} \II_{4,1}(\CR(136|2459),\R_3^+(12|345,678))
        \\&  + \frac{5}{3} \II_{4,1}^+(\R_3^-(12|345,678),\CR(346|1279))
        \end{align*}
    have the following properties. The expressions $A$, $B$, and $C$ can be explicitly written as \( I_{4,1}^+ \) of \( \Lic_2 \) functional equations in the first argument, and \( \Lic_3 \) functional equations in the second argument.  The expression~$D$ is a coboundary term.
\end{proposition}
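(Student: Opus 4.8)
The plan is as follows. The decomposition \eqref{eq:propgr5} is almost formal: substituting Theorem~\ref{thm:gr5toi41} into \eqref{eq:gr5del} rewrites $\tfrac{1}{640}\GR_5$ in terms of $\II_{4,1}$ and $\Lic_5$, so that \eqref{eq:gr5del} equals the combination \eqref{eqn:gr5pluscob} modulo products and explicit $\Lic_5$ terms (the two $8\Lic_5$-summands of \eqref{eq:gr5i41} being absorbed into the $\Lic_5$ part on the right of \eqref{eq:propgr5}). Applying the conversion formula \eqref{eqn:i41toi41p} to five of the six $\II_{4,1}$-terms of \eqref{eqn:gr5pluscob} --- leaving the term $\tfrac{10}{3}\II_{4,1}(\CR(136|2459),\R_3^{+}(12|345,678))$ unconverted, so that it becomes the corresponding term of $D$ --- then produces, inside $\Alt_{10}$, ten $\II_{4,1}^{+}$-terms (together with further $\Lic_5$ terms). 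Sorting these ten terms and the one unconverted term into $A,B,C,D$ is elementary once one splits two of the ten coefficients (via $1=-1+2$ and $2=-\tfrac43+\tfrac{10}{3}$), so that $A$ takes its stated antisymmetric form (a difference $\II_{4,1}^{+}(x,y)-\II_{4,1}^{+}(y,x)$) and $D$ its stated coefficients; all the $\Lic_5$ terms produced along the way make up the explicit $\Lic_5$'s on the right of \eqref{eq:propgr5}.

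The substantive content is the claimed structure of the four blocks. That $D$ is a coboundary term follows by inspecting the point-dependencies of its four summands: the first two are built only from $\rho_1,\dots,\rho_4$ and hence depend only on $v_1,\dots,v_7,v_9,v_{10}$ (they omit $v_8$); the last two involve $\R_3^{\pm}(12|345,678)$ together with a projected cross-ratio supported on $\{v_1,\dots,v_6,v_9\}$, respectively $\{v_1,v_2,v_3,v_4,v_6,v_7,v_9\}$, and hence depend only on $v_1,\dots,v_9$ (they omit $v_{10}$). Since every summand of $D$ thus involves at most nine of the ten points, $\Alt_{10}[D]$ lies in the image of the coboundary operator of the relevant cochain complex --- it is a coboundary in the sense fixed in the weight $4$ discussion --- and does not change the associated cohomology class.

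For $A$, $B$ and $C$ we must exhibit, after skew-symmetrisation, explicit groupings of the constituent $\II_{4,1}^{+}$-terms that realise each of them as a combination of $\II_{4,1}^{+}$ applied to $\Lic_2$ functional equations in the first argument and to $\Lic_3$ functional equations in the second. For the first argument, the relevant dilogarithm functional equations are the five-term relation applied to projected cross-ratios of collinear points --- the anharmonic inversion and reflection relations being already absorbed by working with the symmetrised function $\wI_{4,1}^{+}$ of Proposition~\ref{prop:i41tilde}. For the second argument, the relevant trilogarithm functional equations are Kummer's inversion relation $\{y\}_3=\{y^{-1}\}_3$, the three-term relation $\{z\}_3+\{1-z\}_3+\{1-z^{-1}\}_3=\{1\}_3$, and --- decisively --- the trilogarithm functional equations satisfied by Goncharov's triple ratio; it is precisely the latter that forces the triple-ratio combinations $\R_3^{\pm}(12|345,678)$ into the second arguments of $C$ (and $D$). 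All of this --- that the listed first arguments form $\Lic_2$ functional equations, that the listed second arguments form $\Lic_3$ functional equations, and that the resulting identities for $A,B,C$ hold modulo products and $\Lic_5$ --- can be checked on the level of $\otimes$-symbols modulo products in weight $5$, using \eqref{grsymbol} and the known symbol of $\II_{4,1}$; like the check of Theorem~\ref{thm:gr5toi41} it is carried out most conveniently with computer assistance (reducing each tensor to a canonical $\mathfrak{S}_{10}$-orbit representative expressed in irreducible $\SL_5$-invariant polynomials). Granting the conjectural functional equations of $\II_{4,1}^{+}$ recalled earlier, each of $A,B,C$ then reduces to $\Lic_5$ modulo products, which is the point of the decomposition.

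The genuine difficulty is not any single computation but the discovery of the blocks themselves: one has to recognise, among the ten $\II_{4,1}^{+}$-terms with their eight- and nine-point arguments, exactly which linear combinations of first arguments organise into dilogarithm functional equations, which linear combinations of second arguments organise into trilogarithm functional equations --- the triple-ratio relation being the least transparent of these, and the one whose presence dictates the introduction of $\R_3^{\pm}$ --- and which residual terms can be pushed into the nine-point coboundary $D$. Once $A,B,C,D$ have been guessed correctly, the verifications above are routine.
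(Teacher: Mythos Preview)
Your handling of the formal decomposition \eqref{eq:propgr5} and of $D$ being a coboundary is correct and matches the paper. The gap lies in your treatment of $A$, $B$, $C$: you assert that explicit groupings realising each as $\II_{4,1}^{+}$ of $\Lic_2$/$\Lic_3$ functional equations exist and ``can be checked on the level of $\otimes$-symbols \ldots\ with computer assistance'', but you never exhibit such groupings. Since the content of the proposition is precisely the existence of these explicit decompositions, asserting that a verification is possible is not a proof --- you must actually produce the functional equations.

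The paper proceeds concretely, and in a way that diverges from your sketch. For $A$ there is a one-line argument you have missed entirely: the even permutation $\cyc{1,8}\cyc{2,7}\cyc{3,6}\cyc{4,5}$ induces $\rho_i\mapsto\rho_{6-i}$ and hence interchanges the two summands of $A$, so $\Alt_{10}[A]=0$ identically --- no functional equations are needed. For $B$ the paper writes down four specific five-term relations $V(\cdot,\cdot)$ in projected cross-ratios and shows, via transposition-invariance arguments reminiscent of the weight~$4$ proof, that they account for $\Alt_{10}[B]$. For $C$ the paper first exhibits a non-standard $\Lic_3$ functional equation in projected triple ratios (Lemma~\ref{lem:gr5li3}) to eliminate the cross-ratio arguments appearing in the second slot, and then --- crucially --- produces via computer search a $54$-term combination $\Psi$ of cross-ratios such that $\Omega'\mp\tfrac{1}{24}\wI_{4,1}^{+}(\Psi,\R_3(12|345,678))$ are respectively a $\Lic_2$ functional equation in the first slot and a $\Lic_3$ functional equation in the second. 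Your final claim that ``once $A,B,C,D$ have been guessed correctly, the verifications above are routine'' is therefore misleading: for $C$ the proof still requires discovering both the $\Lic_3$ relation of Lemma~\ref{lem:gr5li3} and the combination $\Psi$, neither of which is routine.
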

We prove this Proposition in Section~\ref{sec:proofpropgr5}.
We expect the expression~$D$ to also have a decomposition in terms of $\Lic_2$
and $\Lic_3$ functional equations, but since it is a coboundary, we can simply
add it to the last~$4$ terms (which are also coboundaries)
in~\eqref{eqn:gr5pluscob}.

\medskip
\section{Proof of Theorem~\protect\ref{thm:i211}}
\label{sec:proof_i211}
To prove the result we will compute the symbol of the right-hand side
of~\eqref{eq:i211} and show that it is equal to a constant multiple
of~\eqref{grsymbol}. To compute the symbol we repeatedly apply the $(n-1,1)$-part of
the coproduct to the expression on the right of~\eqref{eq:i211}.
After the first application we obtain
    \begin{align*}
    \Alt_{2m}\bigg[
    \II(0;\rho_1,\dots;\rho_m)
    \otimes\rho_1
    + \II(0;0,\rho_2,\dots;\rho_m)
    \otimes\Big(1-\frac{\rho_2}{\rho_1}\Big)\\
    + \sum_{j=2}^{m-1}
    \II(0;0,\rho_1,\dots,\widehat{\rho_j},\dots;\rho_m)
    \otimes\Big(\frac{\rho_{j}-\rho_{j+1}}{\rho_{j}-\rho_{j-1}}\Big)
    \bigg]\,.
    \end{align*}
Let us denote
    \begin{equation*}
    \begin{array}{ll}
    A_j = \detv{j,\dots,j+m-2,j+m-1}\,,&
    B_j = \detv{j,\dots,j+m-2,2m-1}\,,\\
    C_j = \detv{j+1,\dots,j+m-2,j+m-1,2m}\,,&
    D_j = \detv{j+1,\dots,j+m-2,2m-1,2m}\,
    \end{array}
    \end{equation*}
(note the offset in the definition of $C_j$ and $D_j$).
Then for $2\le j\le m-1$ we have the following factorization
(that follows from two applications of projected Pl\"ucker relations)
    \[      \frac{\rho_j-\rho_{j+1}}{\rho_j-\rho_{j-1}}
    =       \frac{C_{j-2}D_{j}}{C_{j}A_{j-1}}
      \cdot \frac{A_{j}}{D_{j-1}}\,, \]
where the first factor is fixed by the transposition $(j,j-1)$, and the
second one is fixed (up to sign) by $(j+m-2,j+m-1)$. Since
$\II(0;0,\rho_1,\dots,\widehat{\rho_j},\dots;\rho_m)$ is trivially
invariant under both of these transpositions, we obtain that
    \[\Alt_{2m}\bigg[\II(0;0,\rho_1,\dots,\widehat{\rho_j},\dots;\rho_m)
    \otimes\Big(\frac{\rho_{j}-\rho_{j+1}}{\rho_{j}-\rho_{j-1}}\Big)\bigg]
    = 0\,,\quad 2\le j \le m-1\,.\]
This leaves us with
    \begin{align*}
    \Alt_{2m}\bigg[\II(0;\rho_1,\dots,\rho_{m-1};\rho_m)\otimes \rho_1
    +\II(0;0,\rho_2,\dots,\rho_{m-1};\rho_m)\otimes
    \Big(1-\frac{\rho_2}{\rho_1}\Big)
    \bigg]\,.
    \end{align*}
Next, after the second application of the coproduct we get
    \begin{align*}
    \Alt_{2m}\bigg[
    \II(0;\rho_2,\dots,\rho_{m-1};\rho_m)\otimes \big(1-\tfrac{\rho_2}{\rho_1}\big)
    \otimes \rho_1
    +\sum_{j=2}^{m-1}\II(0;\rho_1,\dots,\widehat{\rho_j},\dots;\rho_m)
    \otimes\big(\tfrac{\rho_{j}-\rho_{j+1}}{\rho_{j}-\rho_{j-1}}\big)
    \otimes\rho_1\\
    + \II(0;\rho_2,\dots,\rho_{m-1};\rho_m)
    \otimes \rho_2\otimes
    \big(1-\tfrac{\rho_2}{\rho_1}\big)
    + \II(0;0,\rho_3,\dots,\rho_{m-1};\rho_m)
    \otimes \big(1-\tfrac{\rho_3}{\rho_2}\big)
    \otimes \big(1-\tfrac{\rho_2}{\rho_1}\big)\\
    +\sum_{j=3}^{m-1}\II(0;0,\rho_2,\dots,\widehat{\rho_j},\dots;\rho_m)
    \otimes\big(\tfrac{\rho_{j}-\rho_{j+1}}{\rho_{j}-\rho_{j-1}}\big)
    \otimes\big(1-\tfrac{\rho_2}{\rho_1}\big)\bigg]\,.
    \end{align*}
By the same reasoning as before, we see that after skew-symmetrization
each of the two sums above cancels out termwise, so we are left with
    \begin{align*}
    \Alt_{2m}\left[
    \II(0;\rho_2,\dots,\rho_{m-1};\rho_m)
    \otimes \big(1-\tfrac{\rho_2}{\rho_1}\big) \otimes \rho_1
    +\II(0;\rho_2,\dots,\rho_{m-1};\rho_m)
    \otimes \rho_2\otimes \big(1-\tfrac{\rho_2}{\rho_1}\big) \right.\\
    \left.+\II(0;0,\rho_3,\dots,\rho_{m-1};\rho_m)
    \otimes \big(1-\tfrac{\rho_3}{\rho_2}\big)
    \otimes \big(1-\tfrac{\rho_2}{\rho_1}\big)
    \right]\,.
    \end{align*}
Continuing in this fashion we arrive at the following expression for the
symbol of the RHS of~\eqref{eq:i211}
    \begin{equation} \label{eq:i211symb}
    \Alt_{2m}\bigg[
    \sum_{j=1}^{m}\Big(1-\frac{\rho_m}{\rho_{m-1}}\Big) \otimes
    \Big(1-\frac{\rho_{m-1}}{\rho_{m-2}}\Big) \otimes\dots
    \otimes \Big(1-\frac{\rho_{j+1}}{\rho_{j}}\Big)
    \otimes \rho_j
    \otimes \Big(1-\frac{\rho_j}{\rho_{j-1}}\Big) \otimes\dots
    \otimes \Big(1-\frac{\rho_2}{\rho_1}\Big) \bigg]\,,
    \end{equation}
where $\rho_j$ is inserted in the $j$-th position from the right.
For $m=2$ we directly compute
    \[\Alt_{4}\bigg[
    \Big(1-\frac{\rho_2}{\rho_{1}}\Big) \otimes \rho_1
    +\rho_2 \otimes \Big(1-\frac{\rho_2}{\rho_{1}}\Big)
    \bigg]
    = -12\Alt_{4}\Big[\detv{12}\otimes \detv{23}\Big]\,,\]
which proves~\eqref{eq:i211} in this case.
From now on we assume that $m\ge3$. Noting the following factorizations
    \[\rho_j = \frac{B_{j}}{C_{j-1}}\,,\quad\quad\quad
    1-\frac{\rho_{j+1}}{\rho_j} = \frac{A_{j}D_{j}}{B_{j}C_{j}}\,, \]
let us look at the $k$-th term of the sum in~\eqref{eq:i211symb}:
    \begin{align*}
    &\frac{A_{m-1}D_{m-1}}{B_{m-1}C_{m-1}}
    \otimes \dots
    \otimes \frac{A_{k}D_{k}}{B_{k}C_{k}}
    \otimes \frac{B_k}{C_{k-1}}
    \otimes \frac{A_{k-1}D_{k-1}}{B_{k-1}C_{k-1}}
    \otimes \dots
    \otimes \frac{A_{1}D_{1}}{B_{1}C_{1}}\,\\
    =\, &\frac{A_{m-1}D_{m-1}}{B_{m-1}C_{m-1}}
    \otimes \dots
    \otimes \frac{A_{k}D_{k}}{B_{k}C_{k}}
    \otimes B_k
    \otimes \frac{A_{k-1}D_{k-1}}{B_{k-1}C_{k-1}}
    \otimes \dots
    \otimes \frac{A_{1}D_{1}}{B_{1}C_{1}}\,\\
    -\, &\frac{A_{m-1}D_{m-1}}{B_{m-1}C_{m-1}}
    \otimes \dots
    \otimes \frac{A_{k}D_{k}}{B_{k}C_{k}}
    \otimes C_{k-1}
    \otimes \frac{A_{k-1}D_{k-1}}{B_{k-1}C_{k-1}}
    \otimes \dots
    \otimes \frac{A_{1}D_{1}}{B_{1}C_{1}}\,
    \end{align*}
(for $k=1$ and $k=m$ we omit the terms $\frac{A_{m}D_{m}}{B_{m}C_{m}}$
and $\frac{A_{0}D_{0}}{B_{0}C_{0}}$, respectively). In the above expression
each term of the type $\dots\otimes C_k\otimes B_k\otimes \dots$
will appear in the $k$-th and $(k-1)$-st terms with opposite signs
and hence these terms cancel out. Therefore, we need to compute the sum
$\sum_{k=1}^{m}(T_k^{B}-T_k^{C})$, where the~$T_k^{*}$ are given by
    \begin{equation} \label{eq:TkBC}
    \begin{split}
    T_k^{B} &= \frac{A_{m-1}D_{m-1}}{B_{m-1}C_{m-1}}
    \otimes \dots
    \otimes \frac{A_{k}D_{k}}{B_{k}}
    \otimes B_k
    \otimes \frac{A_{k-1}D_{k-1}}{B_{k-1}C_{k-1}}
    \otimes \dots
    \otimes \frac{A_{1}D_{1}}{B_{1}C_{1}}\,,\\
    T_k^{C} &= \frac{A_{m-1}D_{m-1}}{B_{m-1}C_{m-1}}
    \otimes \dots
    \otimes \frac{A_{k}D_{k}}{B_{k}C_{k}}
    \otimes C_{k-1}
    \otimes \frac{A_{k-1}D_{k-1}}{C_{k-1}}
    \otimes \dots
    \otimes \frac{A_{1}D_{1}}{B_{1}C_{1}}\,.
    \end{split}
    \end{equation}
For $\pi\in\mathfrak{S}_{2m}$ we denote by $\sigma_{\pi}$ the
automorphism of the ring of regular functions on $\Conf{2m}{m}$
induced by $\pi$. It is easy to check that the involution
    \begin{equation}\label{eq:thm3inv}
    a_1\otimes\dots\otimes a_m\mapsto
    \sigma_{\pi}(a_m)\otimes\dots\otimes \sigma_\pi(a_1)\,,
    \end{equation}
where $\pi=(2m-1,2m)(1,2m-2)(2,2m-3)\dots(m-1,m)$
interchanges $T_k^{B}$ and $T_{m+1-k}^{C}$ (for now we ignore the sign).
Indeed, the permutation~$(2m-1,2m)$ interchanges $B_j$ and $C_{j-1}$,
the permutation $(1,2m-2)(2,2m-3)\dots(m-1,m)$
maps $X_j\leftrightarrow X_{m-j}$ for $X=A,D$,
$B_j\leftrightarrow B_{m+1-j}$, and $C_j\leftrightarrow C_{m-j-1}$,
so their composition maps
$B_j\leftrightarrow C_{m-j}$ and $A_{j}\leftrightarrow A_{m-j}$,
$D_{j}\leftrightarrow D_{m-j}$, and after reversing the order
of the tensors we get the involution $T_k^B\leftrightarrow T_{m+1-k}^C$.
Thus, it is enough to calculate the terms $T_{k}^{B}$
modulo $\Alt_{2m}$ for $k=1,\dots,m$. We will expand $T_k^B$
into a sum of ``elementary tensors'' and describe all
such tensors that are not fixed by any transposition
in~$\mathfrak{S}_{2m}$ (since any such term will vanish after
skew-symmetrization). We have the following lemma.
\begin{lemma} \label{lem:ABCDs}
    Let $w=X_{m-1}\otimes\dots\otimes X_{k}\otimes B_k\otimes
    X_{k-1}\otimes\dots\otimes X_1$, where $X_i\in \{A_i,B_i,C_i,D_i\}$,
    be any term in the expansion of $T_{k}^B$, $1\le k\le m$, that is not
    fixed by any transposition, and let~$2\le j\le m-1$ be such that $j\ne k$.
    Then
    \begin{itemize}
    \item[(i)] if $X_{j-1}\in\{A_{j-1},B_{j-1}\}$,
    then $X_{j}\in\{A_{j},B_{j}\}$;
    \item[(ii)] if $X_{j-1}\in \{B_{j-1},D_{j-1}\}$,
    then $X_{j}\in\{B_{j},D_{j}\}$;
    \item[(iii)] if $X_{j}\in \{C_{j},D_{j}\}$,
    then $X_{j-1}\in\{C_{j-1},D_{j-1}\}$;
    \item[(iv)] if $X_{j}\in \{A_{j},C_{j}\}$,
    then $X_{j-1}\in\{A_{j-1},C_{j-1}\}$;
    \item[(v)] if $k<m$ and $X_{1}\in \{A_{1},C_{1}\}$,
    then $X_{m-1}\in\{C_{m-1},D_{m-1}\}$;
    \item[(vi)] if $k<m$ and $X_{1}\in \{C_{1},D_{1}\}$,
    then $X_{m-1}\in\{A_{m-1},C_{m-1}\}$.
    \end{itemize}
\end{lemma}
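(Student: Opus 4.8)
The plan is to use that, after expanding the $m$ quotient‑factors of $T_k^B$ by multilinearity of the tensor, each elementary summand $w=X_{m-1}\otimes\dots\otimes X_k\otimes B_k\otimes X_{k-1}\otimes\dots\otimes X_1$ --- whose entries are among the determinants $A_\ell,B_\ell,C_\ell,D_\ell$ and $B_k$ --- is annihilated by $\Alt_{2m}$ as soon as it is fixed \emph{as a symbol} (hence up to the sign of a determinant, a root of unity and so immaterial) by some transposition $\tau=(ab)$ of the $2m$ points $v_1,\dots,v_{2m}$, since then pairing $\sigma$ with $\sigma\tau$ cancels the skew‑symmetrisation. As a symbol, a permutation of the $v_i$ merely permutes the index sets of the entries of $w$, so $\tau(w)=w$ precisely when every entry's index set contains both of $a,b$ or neither; call a slot whose index set contains exactly one of $a,b$ one that \emph{separates} $\{a,b\}$. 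It thus suffices to show that under the hypothesis of each of (i)--(vi) together with the negation of its conclusion some pair $\{a,b\}$ is separated by no slot of $w$, which forces $w$ to vanish under $\Alt_{2m}$, contrary to assumption. Everything reduces to a finite check using the interval‑like shape of the index sets: with $[p,q]=\{p,p+1,\dots,q\}$,
\[
A_\ell=[\ell,\ell+m-1],\quad
B_\ell=[\ell,\ell+m-2]\cup\{2m-1\},\quad
C_\ell=[\ell+1,\ell+m-1]\cup\{2m\},\quad
D_\ell=[\ell+1,\ell+m-2]\cup\{2m-1,2m\},
\]
and $B_k=[k,k+m-2]\cup\{2m-1\}$; in particular, for $1\le\ell\le m-1$, $\ell\in X_\ell\iff X_\ell\in\{A_\ell,B_\ell\}$ and $\ell+m-1\in X_\ell\iff X_\ell\in\{A_\ell,C_\ell\}$. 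Since a block of consecutive integers separates $\{a,b\}$ only if it starts at $\max(a,b)$ or ends at $\min(a,b)$, each prescribed pair is separable by only a few of the slots of $w$.

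For (i) and (iii) take $\tau=(j-1,j)$: running through the four families, and noting that $B_k$ cannot separate $\{j-1,j\}$ since $j\ne k$ and $2\le j\le m-1$, the only slots of $w$ that can separate $\{j-1,j\}$ are $A_j$ or $B_j$ in slot $j$, and $C_{j-1}$ or $D_{j-1}$ in slot $j-1$. Hence $X_{j-1}\in\{A_{j-1},B_{j-1}\}$ together with $X_j\in\{C_j,D_j\}$ leaves $\{j-1,j\}$ unseparated --- a contradiction --- which gives (i); (iii) is its contrapositive. Likewise, for (ii) and (iv) take $\tau=(j+m-2,j+m-1)$: the only separating slots are $A_{j-1}$ or $C_{j-1}$ in slot $j-1$, and $B_j$ or $D_j$ in slot $j$, so $X_{j-1}\in\{B_{j-1},D_{j-1}\}$ with $X_j\in\{A_j,C_j\}$ is impossible --- this is (ii), with (iv) its contrapositive.

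For (v) and (vi) assume $k<m$, so slots $1$ and $m-1$ are ordinary $\frac{A_\ell D_\ell}{B_\ell C_\ell}$‑slots, and use the ``wrap‑around'' transpositions linking the two ends of $w$. For $\tau=(m-1,m)$ the blocks of all middle slots --- and of $B_k$, when $2\le k\le m-1$ --- contain both $m-1$ and $m$, so the only separating slots are $B_1$ or $D_1$ in slot $1$, and $C_{m-1}$ or $D_{m-1}$ in slot $m-1$; hence $X_1\in\{A_1,C_1\}$ with $X_{m-1}\in\{A_{m-1},B_{m-1}\}$ is impossible, giving (v). For $\tau=(1,2m-2)$ no middle slot --- and, for $2\le k\le m-1$, not $B_k$ either --- contains $1$ or $2m-2$, the only separating slots being $A_1$ or $B_1$ in slot $1$ and $A_{m-1}$ or $C_{m-1}$ in slot $m-1$; so $X_1\in\{C_1,D_1\}$ with $X_{m-1}\in\{B_{m-1},D_{m-1}\}$ is impossible, giving (vi).

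The part requiring genuine care is the bookkeeping at the boundary: when a block runs into the special indices $2m-1,2m$ --- this happens for $\ell$ near $m$, in particular at the slot of subscript $k$, and for the short tensors $T_1^{B}$, $T_m^{B}$ of \eqref{eq:TkBC}, which carry one fewer $\frac{AD}{BC}$‑slot together with an extra fixed slot --- and when $m$ is so small that the length‑$(m-2)$ blocks $D_\ell$ degenerate. In such configurations the transpositions above can acquire an unwanted separation (from $B_k$ or from the extra fixed slot), and one must instead use a different transposition --- for example $(2m-2,2m)$ or $(m,m+1)$, or one of the above with the two extreme slots interchanged --- or else fall back on the already‑established (i)--(iv). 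For $k\in\{1,m\}$ the relevant $T_k^{B}$, $T_k^{C}$ involve only boundedly many terms, so these cases can in any event be disposed of by direct inspection.
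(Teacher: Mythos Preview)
Your first three paragraphs are correct and follow exactly the paper's approach: the same four transpositions $(j-1,j)$, $(j+m-2,j+m-1)$, $(m-1,m)$, $(1,2m-2)$ are used, and your bookkeeping of which $A_\ell,B_\ell,C_\ell,D_\ell$ (and $B_k$) separate a given pair is accurate. The paper's proof is no more detailed than yours --- it simply lists the non-fixed elements under each transposition and concludes.

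Your final paragraph, however, should be deleted. None of the worries you raise actually materialise for (i)--(iv): the blocks never run into $2m-1,2m$ (the largest index in $A_\ell$ is $\ell+m-1\le 2m-2$), the $D_\ell$ block is nonempty for $m\ge 3$ (and for $m=2$ the conditions $2\le j\le m-1$ are vacuous), and $B_k$ separates $\{j-1,j\}$ (resp.\ $\{j+m-2,j+m-1\}$) precisely when $j=k$, which is excluded by hypothesis. More importantly, your claim that $T_1^B$ has ``boundedly many terms'' is false --- it has on the order of $4^{m-2}$ terms --- and your suggestion that (v), (vi) can be rescued at $k=1$ by a different transposition or by direct inspection is also wrong: parts (v) and (vi) are \emph{false} for $k=1$. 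For instance $v_{1,1}^A=A_{m-1}\otimes\cdots\otimes A_1\otimes B_1$ has $X_1=A_1\in\{A_1,C_1\}$ and $X_{m-1}=A_{m-1}\notin\{C_{m-1},D_{m-1}\}$, yet is not fixed by any transposition. The paper's one-line proof of (v), (vi) tacitly assumes $k\ge 2$ (since for $k=1$ the fixed slot $B_k=B_1$ is itself non-fixed under $(m-1,m)$ and $(1,2m-2)$, making the argument vacuous). This is harmless in the sequel because (v), (vi) are only invoked to ``wrap around'' from $X_1$ back to $X_{m-1}$ when some $X_i=C_i$ with $i<k$, which cannot occur for $k=1$; so simply note the implicit restriction $k\ge 2$ in (v), (vi) rather than trying to patch a false statement.
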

(Note that for $k=m$ we have $w=B_m\otimes X_{m-1}\otimes\dots\otimes
X_1$.)
\begin{proof}
For $2\le j\le m-1$ the only elements
of $S=\{B_m\}\cup\bigcup_{i=1}^{m-1}\{A_i,B_i,C_i,D_i\}$
that are not fixed (up to sign) by the transposition $(j-1,j)$
are $A_j,B_j,C_{j-1},D_{j-1}$, and hence
if $X_{j-1}\not\in\{C_{j-1},D_{j-1}\}$, then one of $A_j,B_j$
has to appear among $X_i$, thus $X_j\in\{A_j,B_j\}$, proving~(i).
Similarly, the only elements of $S$ that are not fixed
by the transposition $(j+m-2,j+m-1)$ are $A_{j-1},C_{j-1},B_{j},D_{j}$,
and from this we obtain~(ii). Parts~(iii) and~(iv) are simply
the contrapositives of~(i) and~(ii). To prove~(v) and~(vi) we look
at the transpositions $(m-1,m)$ and $(1,2m-2)$, whose
only non-fixed elements are $B_1,D_1,C_{m-1},D_{m-1},B_m$
and $A_1,B_1,A_{m-1},C_{m-1},B_m$ respectively.
\end{proof}
Next, we consider two cases.

\smallskip
\textbf{Case $k<m$.}
We claim that in the expansion of $T_k^B$ every term is fixed by some
transposition except for the following $2k$ terms:
    \begin{equation} \label{eq:thm3case1}
    \begin{split}
    &v_{j,k}^{A}=(-1)^{k-j}A_{m-1}\otimes\dots\otimes A_{k}\otimes
    B_k\otimes B_{k-1}\otimes\dots\otimes B_{j}\otimes D_{j-1}
    \otimes \dots \otimes D_{1}\,,\\
    &v_{j,k}^{D}=(-1)^{k-j}D_{m-1}\otimes\dots\otimes D_{k}\otimes
    B_k\otimes B_{k-1}\otimes\dots\otimes B_{j}\otimes A_{j-1}
    \otimes \dots \otimes A_{1}\,,
    \end{split}
    \end{equation}
for $1\le j \le k$.
Let $w=X_{m-1}\otimes\dots\otimes X_{k}\otimes B_k\otimes
X_{k-1}\otimes\dots\otimes X_1$ be any term in the expansion
of $T_k^B$ that is not fixed by any transposition
(we ignore the coefficient with which $w$ appears in $T_k^{B}$).
First, note that if any $X_i=C_i$, then so must be $X_{k}=C_k$ by
repeatedly using parts (iii)-(vi) of the lemma, but looking
at~\eqref{eq:TkBC} we see that $X_k\ne C_k$.
Thus there are no $C_i$ among $X_i$'s. Next,
if $X_1=D_1$, then $X_{m-1}=A_{m-1}$ by~(vi), and by parts (i) and (ii)
we must have $X_i=A_i$, $i\ge k$, and $X_i=D_i$ for $i<j$ for some
$k\ge j > 1$, i.e., $v_{j,k}^A$ for some $j$.
Similarly, if $X_1=A_1$, then we get $v_{j,k}^D$ for some $j$.
Finally, let $X_1=B_1$.
Then by~(i) and~(ii) we have
$w=X_{m-1}\otimes\dots\otimes X_{k}\otimes B_k\otimes
B_{k-1}\otimes\dots\otimes B_1$. If $X_k=B_k$, then $X_i=B_i$
for all~$i$, but then~$w$ would be fixed by the
transposition $(m-1,2m-1)$. If $X_k=A_k$, then by~(i) and~(ii)
we have $X_i=A_i$ for $i=k,\dots,l$, and $X_i=B_i$ for $l<i\le m-1$.
In this case if $l<m-1$, then $(2m-2,2m)$ fixes~$w$. So the only
term that is not fixed by transpositions in this case is~$v_{1,k}^{A}$.
Similarly, in the case $X_k=D_k$ we get~$v_{1,k}^{D}$.

\smallskip
\textbf{Case $k=m$.} In this case parts~(i) and~(ii) of the
above lemma immediately imply that in the expansion of~$T_m^B$
only the following~$m^2$ terms are not fixed by any transposition:
    \begin{equation} \label{eq:thm3case2}
    \begin{split}
    &w_{i,j}^{A}=(-1)^{m-j+i-1}B_{m}\otimes\dots\otimes B_{j}\otimes
    A_{j-1}\otimes A_{j-2}\otimes\dots\otimes A_{i}\otimes C_{i-1}
    \otimes \dots \otimes C_{1}\,, \\
    &w_{i,j}^{D}=(-1)^{m-j+i-1}B_{m}\otimes\dots\otimes B_{j}\otimes
    D_{j-1}\otimes D_{j-2}\otimes\dots\otimes D_{i}\otimes C_{i-1}
    \otimes \dots \otimes C_{1}\,,
    \end{split}
    \end{equation}
for $1\le i \le j \le m$  (here $w_{i,i}^{A}=w_{i,i}^{D}$,
so we only count them once).

\smallskip
What is left to show is that all of the terms in~\eqref{eq:thm3case1}
and~\eqref{eq:thm3case2} are $\Alt_{2m}$-equivalent to
    \begin{equation}\label{eq:RRR}
    R = (-1)^{m-1}\detv{1,\dots,m} \otimes \detv{2,\dots,m+1}
    \otimes \dots \otimes \detv{m,\dots,2m-1}\,.
    \end{equation}
First, we show that all terms are pairwise $\Alt_{2m}$-equivalent.
The permutation $(j+m-2,2m)$ sends $v_{j,k}^{A}$ to $v_{j-1,k}^{A}$,
and the permutation $(j-1,2m-1)$ sends $v_{j,k}^{D}$ to $v_{j-1,k}^{D}$,
thus each $v_{j,k}^{*}$ is equivalent to $v_{1,k}^{*}$.
Similarly, the permutation $(i-1,2m)$ sends $w_{i,j}^{A}$
to $w_{i-1,j}^{A}$ and the permutation $(i+m-2,2m-1)$
sends $w_{i,j}^{D}$ to $w_{i-1,j}^{D}$, thus each $w_{i,j}^{*}$
is equivalent to $w_{1,j}^{*}$. Next, applying $(k,2m-1)$
to $v_{1,k}^{A}$ we get $v_{1,k+1}^{A}$, and applying $(k+m-1,2m)$
to $v_{1,k}^{D}$ we get $v_{1,k+1}^{D}$, where we set
both $v_{1,m}^{A}$ and $v_{1,m}^D$ to be equal to $w_{1,1}^{A}$.
Similarly, applying $(j+m-2,2m-1)$ to $w_{1,j}^{A}$ we get $w_{1,j-1}^{A}$,
and applying $(j-1,2m)$ to $w_{1,j}^{D}$ we get $w_{1,j-1}^{D}$.
Thus all terms are equivalent to $w_{1,1}^{A}$, from which
we get~$R$ after first applying $(2m-1,2m-2,\dots,m+1,m)$ and
then $(1,2m-1)(2,2m-2)\dots (m-1,m+1)$
(both permutations have sign $(-1)^{m-1}$).

This shows that $\Alt_{2m}(\sum_{k=1}^{m}T_j^{B})$ is equal to
    \[m(2m-1)(-1)^{m-1}\Alt_{2m}
    \Big[\detv{1,\dots,m} \otimes \detv{2,\dots,m+1}
    \otimes \dots \otimes \detv{m,\dots,2m-1}\Big]\,.\]
Since the transformation~\eqref{eq:thm3inv} maps each term on
the right hand side of the above equation to its negation
(modulo $\Alt_{2m}$), and it maps $T_k^{B}$ to $T_{m+1-k}^{C}$,
we get that $\Alt_{2m}(\sum_{k=1}^{m}T_j^{C})$ equals
    \[m(2m-1)(-1)^{m}
    \mathrm{Alt}_{2m}\Big[\detv{1,\dots,m} \otimes \detv{2,\dots,m+1}
    \otimes \dots \otimes \detv{m,\dots,2m-1}\Big]\,.\]
Combining these two identities together with~\eqref{grsymbol}
we obtain the claim of the theorem.

\begin{remark}
    The proof of~\eqref{eq:i211inf} is analogous,
    except that~\eqref{eq:i211symb} becomes
    \begin{equation*}
    \Alt_{2m}\bigg[
    \sum_{j=1}^{m}(\rho_{m}-\rho_{m-1}) \otimes\dots
    \otimes (\rho_{j+1}-\rho_{j})
    \otimes \rho_j
    \otimes\Big(1-\frac{\rho_j}{\rho_{j-1}}\Big) \otimes\dots
    \otimes \Big(1-\frac{\rho_2}{\rho_1}\Big) \bigg]\,,
    \end{equation*}
    and the rest of the combinatorial analysis needs to be changed accordingly.
    We then again use
    \begin{align*}
    &\frac{A_{m-1}D_{m-1}}{C_{m-1}C_{m-2}}
    \otimes \dots
    \otimes \frac{A_{k}D_{k}}{C_{k}C_{k-1}}
    \otimes \frac{B_k}{C_{k-1}}
    \otimes \frac{A_{k-1}D_{k-1}}{B_{k-1}C_{k-1}}
    \otimes \dots
    \otimes \frac{A_{1}D_{1}}{B_{1}C_{1}}\,\\
    =\, &\frac{A_{m-1}D_{m-1}}{C_{m-1}C_{m-2}}
    \otimes \dots
    \otimes \frac{A_{k}D_{k}}{C_{k}C_{k-1}}
    \otimes B_k
    \otimes \frac{A_{k-1}D_{k-1}}{B_{k-1}C_{k-1}}
    \otimes \dots
    \otimes \frac{A_{1}D_{1}}{B_{1}C_{1}}\,\\
    -\, &\frac{A_{m-1}D_{m-1}}{C_{m-1}C_{m-2}}
    \otimes \dots
    \otimes \frac{A_{k}D_{k}}{C_{k}C_{k-1}}
    \otimes C_{k-1}
    \otimes \frac{A_{k-1}D_{k-1}}{B_{k-1}C_{k-1}}
    \otimes \dots
    \otimes \frac{A_{1}D_{1}}{B_{1}C_{1}}\,,
    \end{align*}
    where again the terms of the form $\dots\otimes C_k\otimes B_k\otimes \cdots$
    cancel between consecutive values of~$k$.
    All other terms in the expansion are either invariant
    under a transposition and thus are $\Alt_{2m}$-equivalent to $0$
    or are $\Alt_{2m}$-equivalent to~$R$, where $R$ is defined
    in~\eqref{eq:RRR}.
\end{remark}

\medskip
\section{Proof of Theorem~\protect\ref{thm:aomoto}}
\label{sec:aomoto}
To simplify notation, in this section we will write $\rho_i$ for $\rho_{i}^{(1,2m)}$,
contrary to the convention throughout the rest of the paper.
The proof follows the same strategy as the proof of Theorem~\ref{thm:i211} above:
to prove~\eqref{eq:aomoto} we calculate the symbol of
	\begin{equation} \label{eq:aomotocand}
	\Alt_{m,m}\Big[\II(0;\rho_{m+1},\rho_{m},\dots,\rho_{3};\rho_{2})\Big]
	\end{equation}
by repeatedly applying the $(n-1,1)$-part of the coproduct to it. After the first
application we obtain
	\[\Alt_{m,m}
	\Big[\II(0;\rho_{m},\dots, \rho_{3};\rho_{2})\otimes
	\frac{\rho_{m+1}-\rho_{m}}{\rho_{m+1}}
	+\sum_{j=3}^{m}\II(0;\rho_{m+1},\dots,\widehat{\rho_j},\dots\rho_{3};\rho_{2})
	\otimes	\frac{\rho_{j}-\rho_{j-1}}{\rho_{j}-\rho_{j+1}}\Big]\,.\]
As in the proof of Theorem~\ref{thm:i211}, we note that the
$\frac{\rho_{j}-\rho_{j-1}}{\rho_{j}-\rho_{j+1}}$
decomposes into a product of two terms, one fixed by the involution $(j-1,j)$,
and the other by $(j+m-2,j+m-1)$. Since both of these involutions fix
$\II(0;\rho_{m+1},\dots,\widehat{\rho_j},\dots,\rho_{3};\rho_{2})$ each term
in the sum $\sum_{j=3}^{m}$ vanishes under $\Alt_{m,m}$. Applying the
$(n-1,1)$-part of the coproduct again to the remaining term we get
	\[\Alt_{m,m}
	\Big[\Big(\II(0;\rho_{m-1},\dots, \rho_{3};\rho_{2})\otimes
	\frac{\rho_{m}-\rho_{m-1}}{\rho_{m}}
	+\sum_{j=3}^{m-1}\II(0;\rho_{m},\dots,\widehat{\rho_j},\dots\rho_{3};\rho_{2})
	\otimes	\frac{\rho_{j}-\rho_{j-1}}{\rho_{j}-\rho_{j+1}}\Big)\otimes
		\frac{\rho_{m+1}-\rho_{m}}{\rho_{m+1}}\Big]\,.\]
By the same argument as above, all terms in the sum $\sum_{j=3}^{m-1}$ vanish under
$\Alt_{m,m}$ (since for $3\le j\le m-1$ both $(j-1,j)$ and $(j+m-2,j+m-1)$
leave $\frac{\rho_{m+1}-\rho_{m}}{\rho_{m+1}}$ invariant).
After repeating this argument $(m-2)$ times we get that the symbol
	of~\eqref{eq:aomotocand} is equal to
	\begin{equation} \label{eq:aomoto_interm1}
	\Alt_{m,m}
	\Big[\frac{\rho_{3}-\rho_{2}}{\rho_{3}}\otimes \dots \otimes
	\frac{\rho_{m}-\rho_{m-1}}{\rho_{m}}\otimes
	\frac{\rho_{m+1}-\rho_{m}}{\rho_{m+1}}\Big]\,.
	\end{equation}

Let us denote
    \begin{equation*}
    \begin{array}{ll}
    A_j = \detv{j+1,\dots,j+m-2,j+m-1,1}\,,&
    C_j = \detv{j+1,\dots,j+m-2,2m,1}\,,\\
    B_j = \detv{j+1,\dots,j+m-2,j+m-1,j}\,,&
    D_j = \detv{j+1,\dots,j+m-2,2m,j}\,.
    \end{array}
    \end{equation*}
Then for $3\le j\le m$ we have the following corollary of the (projected) Pl\"ucker 
relations
    \[\frac{\rho_{j+1}-\rho_{j}}{\rho_{j+1}}
    = -\frac{B_{j}C_{j}}{A_{j}D_{j}}\,. \]
We expand the expression in~\eqref{eq:aomoto_interm1}, and let
$T=X_2\otimes X_3\otimes \dots\otimes X_m$ be any tensor in it with
$X_j\in\{A_j,B_j,C_j,D_j\}$.
By analogy with the proof of Lemma~\ref{lem:ABCDs} if for some $j$ we have
$(X_j,X_{j+1})\in \{(B_j,A_{j+1}),(B_j,C_{j+1}), (C_j,A_{j+1}), (C_j,B_{j+1})
,(D_j,A_{j+1}),(D_j,B_{j+1}),(D_j,C_{j+1})\}$,
then $\Alt_{m,m}T=0$, since $T$ will be fixed by the transposition $(j,j+1)$
or by $(j+m-1,j+m)$. Thus any non-vanishing $T$ must follow one of the patterns 
$A^rB^sD^t$ (${m+1\choose 2}$ such with $r,s,t\geq 0$) or $A^rC^sD^t$, 
and a simple counting argument shows that there are $m^2(={m+1\choose 
2}+{m+1\choose 2}-m)$
such terms. Let
	\[w_{k,l} = (-1)^{m-l+k}A_2\otimes\dots \otimes A_{k-1}\otimes B_k\otimes \dots\otimes
	B_l\otimes
	D_{l+1}\otimes \dots\otimes D_{m}.\]
As in the proof
of Theorem~\ref{thm:i211}, applying the transpositions $(1,k-1)$ (resp. $(l+m,2m)$) to
$w_{k,l}$ gives $w_{k-1,l}$ (resp. $w_{k,l+1}$). Thus all $w_{k,l}$ are
$\Alt_{m,m}$-equivalent to $w_{2,m}=B_2\otimes\dots\otimes B_m$.
Similarly, all terms of the form
	\[(-1)^{m-l+k}A_2\otimes\dots \otimes A_{k-1}\otimes C_k\otimes \dots\otimes
		C_l\otimes D_{l+1}\otimes \dots\otimes D_{m}\]
are also $\Alt_{m,m}$-equivalent to $B_2\otimes\dots\otimes B_m$.
Thus,~\eqref{eq:aomoto_interm1} equals
	\[m^2\Alt_{m,m}[B_{2}\otimes \dots\otimes B_{m}]\,,\]
which by Proposition~\ref{prop:aomoto_symbol} equals $(-1)^{m-1}/m^2$ times the 
symbol of the
Aomoto polylogarithm, as claimed.

\medskip
\section{Proof of Theorem~\protect\ref{thm:gr4toi31_2}}
\label{sec:pf_i31}
Proving Theorem~\ref{thm:gr4toi31_2} amounts to computing $\Sl^\shuffle$
of both sides and checking that they are equal. The easiest way to do this
verification is to use a computer algebra system, and in fact this is the way in
which the identity of Theorem~\ref{thm:gr4toi31_2} was discovered. In this section we
will give a direct proof that does not require any computer calculations.

First, note the following identities
    \begin{align}\label{eq:i31cobracketa}
    \begin{split}
    \Sl^\shuffle\II_{3,1}(x,y) =
    \Sl^\shuffle\II_{2,1}(x,y)\otimes\frac{y}{x}
    - \Sl^\shuffle&(\Lic_{3}(x)-\Lic_{3}(x/y))\otimes(1-y^{-1}) \\
    + \Sl^\shuffle&(\Lic_{3}(y)-\Lic_{3}(y/x))\otimes(1-x^{-1})\ ,
    \end{split}
    \end{align}
    \begin{align}\label{eq:i21cobracketa}
    \begin{split}
    \Sl^\shuffle\II_{2,1}(x,y) =
    \Sl^\shuffle\II_{1,1}(x,y)\otimes\frac{y}{x}
    + \Sl^\shuffle&(\Lic_{2}(x)-\Lic_{2}(x/y))\otimes(1-y^{-1}) \\
    + \Sl^\shuffle&(\Lic_{2}(y)-\Lic_{2}(y/x))\otimes(1-x^{-1})\ .
    \end{split}
    \end{align}
Recall also
that $\Sl^{\shuffle}\Lic_2(x)=x \wedge (1-x)$
and that for $k\ge 3$ we have
$\Sl^{\shuffle}\Lic_k(x)=\Sl^{\shuffle}\Lic_{k-1}(x)\otimes
x$.
In this section we will simply write $\II_{k,1}(x,y)$, $\Lic_m(x)$ instead
of $\Sl^\shuffle\II_{k,1}(x,y)$, $\Sl^\shuffle\Lic_m(x)$.

\smallskip
Throughout the proof we work modulo the kernel of
the skew-symmetrization	operator $\Alt_8$;
in particular, any term that is invariant under an odd permutation
is annihilated by $\Alt_8$. We denote such identities by
``$\modalt$''. In the proof we will repeatedly use the fact that the
permutation $\cyc{1,6}\cyc{2,5}\cyc{3,4}$ permutes $\rho_1,\dots,\rho_4$ as
$\rho_4\leftrightarrow\rho_1$, $\rho_3\leftrightarrow\rho_2$. Note also that
the transposition $\cyc{7,8}$ maps each $\rho_i$ to its inverse $\rho_i^{-1}$.
All tensor products are written with respect to multiplication and we work modulo
torsion, so that $a\otimes bc=a\otimes b+a\otimes c$ and $a\otimes (-b)=a\otimes b$.
In particular, recalling the notation $\rho_{i,j}=\rho_i-\rho_j$, the above implies
that $\rho_{i,j}\otimes x = \rho_{j,i}\otimes x$ which we will use freely.
Finally, we denote by $a\wedge b$ the difference $a\otimes b - b\otimes a$
and by $a\odot b$ the sum $a\otimes b + b\otimes a$.

First, we give a different expression for the mod-products symbol of $\GR_4$.
\begin{lemma} \label{lem:gr4li2}
    We have the following identity
    \begin{equation} \label{eq:gr4li2}
    \frac{7}{144}\Sl^\shuffle(\GR_4) =
    8\Alt_8\Big[\Sl^\shuffle
    \Lic_2\Big(\frac{\rho_{2,4}}{\rho_{3,4}}\Big)
    \otimes\rho_{1,2}\otimes\frac{\rho_1}{\rho_4}\Big]
    \,.
    \end{equation}
\end{lemma}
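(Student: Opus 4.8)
The plan is to derive this from Theorem~\ref{thm:i211} in weight~$4$, by pushing the symbol identity there through the mod-products projector~$\widetilde{\Pi}_4$. Specialising Theorem~\ref{thm:i211} to $m=4$ gives $\frac{7}{144}\GR_4\modS-\Alt_8\,\II(0;0,\rho_1,\rho_2,\rho_3;\rho_4)$, and the proof of that theorem in Section~\ref{sec:proof_i211} in fact produces an explicit representative of $\mathcal{S}(\II(0;0,\rho_1,\rho_2,\rho_3;\rho_4))$ modulo $\ker\Alt_8$: it is the sum of the four length-$4$ tensors of~\eqref{eq:i211symb} with $m=4$, written in the letters $\rho_j$ and $1-\rho_{j+1}/\rho_j$ and in their determinant factorisations $\rho_j=B_j/C_{j-1}$, $1-\rho_{j+1}/\rho_j=A_jD_j/(B_jC_j)$ recorded in that section. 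Since $\widetilde{\Pi}_4$ acts only on the ordering of the tensor factors, it commutes with the $\mathfrak{S}_8$-action permuting the points, hence with $\Alt_8$; applying it to the above congruence yields $\frac{7}{144}\,\mathcal{S}^\shuffle(\GR_4)=-\Alt_8\,\widetilde{\Pi}_4\big(\sum_{j=1}^{4}T_j\big)$, where $T_1,\dots,T_4$ are those four tensors. The lemma thereby reduces to a finite, purely combinatorial identity of tensors in the $\detv{\cdot}$'s.

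To carry this out I would unroll the recursive definition of $\widetilde{\Pi}_4$ (which replaces each $T_j$ by eight permuted copies), expand every compound letter into its determinant factors, and then discard each resulting elementary tensor that is fixed with sign~$+1$ by some transposition of $\mathfrak{S}_8$, as any such tensor lies in $\ker\Alt_8$. The relevant transpositions are precisely those from the analysis in Section~\ref{sec:proof_i211}---adjacent transpositions of the points $1,\dots,6$, tracked through the $A_j,B_j,C_j,D_j$ bookkeeping---together with $\cyc{7,8}$, which sends every $\rho_i$ to $\rho_i^{-1}$, and the dihedral reflections acting on $\rho_1,\dots,\rho_6$. After this pruning the surviving elementary tensors should recombine---using $\rho_{i,j}\otimes x=\rho_{j,i}\otimes x$ and the dihedral relabelling to match up orbits---into exactly $-8$ copies of $\big(\tfrac{\rho_{2,4}}{\rho_{3,4}}\wedge\tfrac{\rho_{3,2}}{\rho_{3,4}}\big)\otimes\rho_{1,2}\otimes\tfrac{\rho_1}{\rho_4}$; since $1-\tfrac{\rho_{2,4}}{\rho_{3,4}}=\tfrac{\rho_{3,2}}{\rho_{3,4}}$, this tensor equals $\mathcal{S}^\shuffle\Lic_2\!\big(\tfrac{\rho_{2,4}}{\rho_{3,4}}\big)\otimes\rho_{1,2}\otimes\tfrac{\rho_1}{\rho_4}$, which is~\eqref{eq:gr4li2}.

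The hard part is this middle step: among the many elementary tensors produced by $\widetilde{\Pi}_4$ one must reliably isolate the $\Alt_8$-trivial ones and verify that the remainder assembles into the single $\Lic_2$-shaped term of~\eqref{eq:gr4li2}, with no residual $\Lic_3$-shaped or multi-term $\Lic_2$ contributions. A useful consistency check, and essentially an independent proof, is to bypass the $T_j$ altogether: by~\eqref{grsymbol} and the $\Alt_8$-reduction already obtained in Section~\ref{sec:proof_i211}, $\mathcal{S}(\GR_4)$ is a known rational multiple of $\Alt_8\big[\detv{1234}\otimes\detv{2345}\otimes\detv{3456}\otimes\detv{4567}\big]$, so one can compute $\widetilde{\Pi}_4$ of this single tensor (eight terms) and of the right-hand side of~\eqref{eq:gr4li2}, and compare lexicographically minimal $\mathfrak{S}_8$-orbit representatives, as is done for the weight-$5$ analogue elsewhere in the paper.
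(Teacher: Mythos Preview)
Your proposal outlines a valid route in principle, but it is a genuinely different---and heavier---approach from the paper's. You propose to start from the representative~\eqref{eq:i211symb} of the symbol of the iterated integral, push it through $\widetilde{\Pi}_4$, expand everything into elementary $\detv{\cdot}$-tensors, prune by transpositions, and then \emph{recognise} the surviving terms as the $\Lic_2$-shaped expression on the right of~\eqref{eq:gr4li2}. That last recognition step is exactly what you flag as ``the hard part'', and you do not actually carry it out; since $\widetilde{\Pi}_4$ multiplies the number of tensors by~$8$ before you even factor the compound letters, this is a substantial bookkeeping exercise.

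The paper instead runs the comparison in the opposite direction, and this buys a great deal. It starts from the right-hand side of~\eqref{eq:gr4li2}, expands $\mathcal{S}^{\shuffle}\Lic_2$ as a single wedge, and immediately invokes the key structural observation~\eqref{eq:r32r34}: under $\Alt_8$, the factor $\tfrac{\rho_{3,2}}{\rho_{3,4}}$ kills anything tensored with a function of $\rho_1,\rho_2,\rho_4$ alone, because it splits as a product of a $\cyc{2,3}$-invariant and a $\cyc{5,6}$-invariant. That single fact collapses the $\Lic_2$ wedge to $-\rho_{4,3}\wedge\rho_{3,2}\otimes\rho_{2,1}\otimes\tfrac{\rho_1}{\rho_4}$, and a dihedral reflection combines the two $\rho_1/\rho_4$ halves into $-\widetilde{\Pi}_3(\rho_{4,3}\otimes\rho_{3,2}\otimes\rho_{2,1})\otimes\rho_1$. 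Using the reversal antisymmetry of $\widetilde{\Pi}_4$ under $\cyc{1,7}\cyc{2,6}\cyc{3,5}$ then reduces the whole lemma to the single unprojected identity
\[
\Alt_8\big[\rho_{4,3}\otimes\rho_{3,2}\otimes\rho_{2,1}\otimes\rho_1\big]
= -14\,\Alt_8\big[\detv{1234}\otimes\detv{2345}\otimes\detv{3456}\otimes\detv{4567}\big],
\]
which is~\eqref{eq:rho4321} and is verified by a $128$-term expansion in which only~$14$ terms survive the transposition pruning---precisely the combinatorics already done in the proof of Theorem~\ref{thm:i211}. Thus no $\widetilde{\Pi}_4$ ever has to be applied to a sum of four complicated tensors. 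Your ``consistency check'' at the end is in fact much closer in spirit to what the paper actually does than your main plan is.
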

\begin{proof}
Note that we have
    \begin{equation} \label{eq:r32r34}
    \Alt_8 \Big[\frac{\rho_{3,2}}{\rho_{3,4}}
    \otimes f(\rho_1,\rho_2,\rho_4)\Big] = 0\,,
    \end{equation}
    where $f(\rho_1,\rho_2,\rho_4)$ is any expression that
depends only on $\rho_1,\rho_2,\rho_4$. This follows from the
fact that $\rho_1$, $\rho_2$, and $\rho_4$ are fixed by both
$\cyc{2,3}$ and $\cyc{5,6}$,
while $\frac{\rho_{3,2}}{\rho_{3,4}}=\frac{\detv{2345}\detv{4568}}
{\detv{2348}\detv{4578}}\cdot\frac{\detv{3478}}{\detv{3456}}$
is a product of two terms, one invariant under $\cyc{2,3}$ and the
other
under $\cyc{5,6}$. Using the fact that
$\Sl^\shuffle\Lic_2(x)=x\wedge (1-x)$,
we calculate right-hand side of~\eqref{eq:gr4li2} to be equal to
    \begin{align*}
    8&\Alt_8\Big[
    \frac{\rho_{2,4}}{\rho_{3,4}}\wedge
    \frac{\rho_{3,2}}{\rho_{3,4}}
    \otimes\rho_{1,2}\otimes \frac{\rho_1}{\rho_4}
    \Big] =
    8\Alt_8\Big[
    -\rho_{4,3}\wedge\rho_{3,2}
    \otimes\rho_{2,1}\otimes \frac{\rho_1}{\rho_4}
    \Big] \\
    =8&\Alt_8\Big[
    -\rho_{4,3}\wedge\rho_{3,2}
    \otimes\rho_{2,1}\otimes \rho_1
    -\rho_{1,2}\wedge\rho_{2,3}
    \otimes\rho_{3,4}\otimes \rho_1
    \Big] \\
    =8&\Alt_8\Big[-\Pid_3(\rho_{4,3}\otimes\rho_{3,2}
    \otimes\rho_{2,1})\otimes \rho_1
    \Big] \,,
    \end{align*}
where on the second line we have applied
$\cyc{1,6}\cyc{2,5}\cyc{3,4}$ to one of the terms,
and $\Pid_3$ is the operator that annihilates shuffle
products
(see Section~\ref{sec:modprod}). Since
    \[\Pid_4(a\otimes b\otimes c\otimes d) =
    \Pid_3(a\otimes b \otimes c)\otimes d
    -\Pid_3(d\otimes c \otimes b)\otimes a\,,\]
and the odd permutation $\cyc{1,7}\cyc{2,6}\cyc{3,5}$ maps
$\detv{1234}\otimes \detv{2345} \otimes \detv{3456} \otimes \detv{4567}$
to its reversal, it is enough to prove
    \begin{equation} \label{eq:rho4321}
    \Alt_8\Big[\rho_{4,3}\otimes\rho_{3,2}
    \otimes\rho_{2,1}\otimes \rho_1\Big]
    = -14\Alt_8\Big[\detv{1234}\otimes \detv{2345}
    \otimes \detv{3456} \otimes \detv{4567}\Big]\,.
    \end{equation}
This identity can be seen by either doing the same
combinatorial analysis as in the proof of Theorem~\ref{thm:i211}
or by directly expanding the $128$ terms on the left and noting
that only $14$ of them are not fixed by any transposition
and that the rest is $\Alt_8$-equivalent
to $-\detv{1234}\otimes\detv{2345}\otimes\detv{3456}\otimes\detv{4567}$
(also compare with~\eqref{eq:rho4321b} below).
\end{proof}

Next, we compute the right-hand side of~\eqref{eq:gr4i31_2}.
After applying~\eqref{eq:i31cobracketa} to compute the
$\Sl^{\shuffle}$ of the right hand side of~\eqref{eq:gr4i31_2},
we obtain the skew-symmetrization of
    \begin{align*}
    \Big[&\II_{2,1}\Big(\frac{\rho_{1,2}\rho_{3,4}}
    {\rho_{3,2}\rho_{1,4}},
    \frac{\rho_1}{\rho_{1,4}}\Big)
    +2\II_{2,1}\Big(\frac{\rho_{1,2}}{\rho_1},
    \frac{\rho_{3,2}}{\rho_{3,4}}\Big)
    +6\Lic_{3}(U)\Big]
    \otimes U\\
    +&\Big[-\Lic_{3}\Big(\frac{\rho_{1,2}\rho_{3,4}}
    {\rho_{3,2}\rho_{1,4}}\Big)
    +\Lic_{3}(U)\Big] \otimes \frac{\rho_4}{\rho_1}
    +\Big[\Lic_{3}\Big(\frac{\rho_1}{\rho_{1,4}}\Big)
    -\Lic_{3}(U)\Big] \otimes \frac{\rho_{1,3}\rho_{2,4}}
    {\rho_{1,2}\rho_{3,4}}\\
    +&2\Big[-\Lic_{3}\Big(\frac{\rho_{1,2}}{\rho_1}\Big)
    +\Lic_{3}(U)\Big] \otimes \frac{\rho_{4,2}}{\rho_{3,2}}
    +2\Big[\Lic_{3}\Big(\frac{\rho_{3,2}}{\rho_{3,4}}\Big)
    -\Lic_{3}(U)\Big] \otimes\frac{\rho_2}{\rho_{2,1}}  \,,
    \end{align*}
where as before we abbreviate $\rho_{i,j}=\rho_i-\rho_j$ and
denote $U=\frac{\rho_1\rho_{3,2}}{\rho_{1,2}\rho_{3,4}}$.
Collecting the terms with~$\Lic_{3}(U)$ from the last two lines we get
    \begin{align*}
    &\Lic_{3}(U)
    \otimes
    \Big[\left(\frac{\rho_4}{\rho_1}\right)
    -\left(\frac{\rho_{1,3}\rho_{2,4}}
    {\rho_{1,2}\rho_{3,4}}\right)
    +2\left(\frac{\rho_{4,2}}{\rho_{3,2}}\right)
    -2\left(\frac{\rho_2}{\rho_{2,1}}\right)\Big] \\
    &\modalt \Lic_{3}(U)
    \otimes\Big[-2\left(\frac{\rho_1\rho_{3,2}}{\rho_{1,2}\rho_{3,4}}\right)
    +\left(\frac{\rho_1\rho_4\rho_{1,2}\rho_{2,4}}{\rho_2^2\rho_{1,3}\rho_{3,4}}\right)\Big]
     \modalt-2\Lic_{3}(U) \otimes U
    \,,
    \end{align*}
where we have used the fact that $\cyc{1,6}\cyc{2,5}\cyc{3,4}\cyc{7,8}$
leaves $U$ fixed and sends
the other parenthesized term to its inverse (we have added extra
parentheses to emphasize that we work multiplicatively). This leaves
us with $\Alt_8$ of
    \begin{align*}
    \Big[&\II_{2,1}\Big(\frac{\rho_{1,2}\rho_{3,4}}
    {\rho_{3,2}\rho_{1,4}},
    \frac{\rho_1}{\rho_{1,4}}\Big)
    +2\II_{2,1}\Big(\frac{\rho_{1,2}}{\rho_1},
    \frac{\rho_{3,2}}{\rho_{3,4}}\Big)
    +4\Lic_{3}(U)\Big]
    \otimes U\\
    -&\Lic_{3}\Big(\frac{\rho_{1,2}\rho_{3,4}}
    {\rho_{3,2}\rho_{1,4}}\Big)
    \otimes\frac{\rho_4}{\rho_1}
    +\cancel{\Lic_{3}\Big(\frac{\rho_1}{\rho_{1,4}}\Big)
        \otimes\frac{\rho_{1,3}\rho_{2,4}}{\rho_{1,2}\rho_{3,4}}}
    -2\Lic_{3}\Big(\frac{\rho_{1,2}}{\rho_1}\Big)
    \otimes\frac{\cancel{\rho_{4,2}}}{\rho_{3,2}}
    +2\Lic_{3}\Big(\frac{\rho_{3,2}}{\rho_{3,4}}\Big)
    \otimes\frac{\rho_2}{\rho_{2,1}}  \,,
    \end{align*}
where the crossed-out terms cancel since they expand out to a combination
of terms that are fixed either by $\cyc{1,2}$ or by $\cyc{2,3}$
and hence vanish. Next, we apply~\eqref{eq:i21cobracketa} to the
expression in the square brackets. Rearranging the terms with
$\Lic_2(U)$ as above we get (skew-symmetrization of)
    \begin{align} \label{eq:a1}
    \begin{split}
    \Big[&\II_{1,1}\Big(\frac{\rho_{1,2}\rho_{3,4}}
    {\rho_{3,2}\rho_{1,4}},
    \frac{\rho_1}{\rho_{1,4}}\Big)
    +2\II_{1,1}\Big(\frac{\rho_{1,2}}{\rho_1},
    \frac{\rho_{3,2}}{\rho_{3,4}}\Big)
    +2\Lic_{2}(U)\Big] \otimes U \otimes U \\
    +\Big[&\Lic_{2}\Big(\frac{\rho_{1,2}\rho_{3,4}}
    {\rho_{3,2}\rho_{1,4}}\Big)
    \otimes\frac{\rho_4}{\rho_1}
    +\Lic_{2}\Big(\frac{\rho_1}{\rho_{1,4}}\Big)
    \otimes\frac{\rho_{1,3}\rho_{2,4}}{\rho_{1,2}\rho_{3,4}}
    +2\Lic_{2}\Big(\frac{\rho_{1,2}}{\rho_1}\Big)
    \otimes\frac{\rho_{4,2}}{\rho_{3,2}}
    +2\Lic_{2}\Big(\frac{\rho_{3,2}}{\rho_{3,4}}\Big)
    \otimes\frac{\rho_2}{\rho_{2,1}}\Big] \otimes U \\
    -&\Lic_{3}\Big(\frac{\rho_{1,2}\rho_{3,4}}
    {\rho_{3,2}\rho_{1,4}}\Big)
    \otimes\frac{\rho_4}{\rho_1}
    +2\Lic_{3}\Big(\frac{\rho_{1,2}}{\rho_1}\Big)
    \otimes \rho_{3,2}
    +2\Lic_{3}\Big(\frac{\rho_{3,2}}{\rho_{3,4}}\Big)
    \otimes\frac{\rho_2}{\rho_{2,1}}  \,.
    \end{split}
    \end{align}
We use the following identity that is easy to verify directly
(it holds without any symmetrization)
    \begin{align*}
    \II_{1,1}\Big(\frac{\rho_{1,2}\rho_{3,4}}{\rho_{3,2}\rho_{1,4}},
    \frac{\rho_1}{\rho_{1,4}}\Big)
    +\II_{1,1}\Big(\frac{\rho_{1,2}}{\rho_1},
    \frac{\rho_{3,2}}{\rho_{3,4}}\Big)
    &+\II_{1,1}\Big(\frac{\rho_{3,4}}{\rho_3},
    \frac{\rho_1\rho_{3,2}}{\rho_3\rho_{1,2}}\Big)
    +2\Lic_{2}(U) \\
    &=
    \Lic_{2}\Big(\frac{\rho_1}{\rho_2}\Big) +
    \Lic_{2}\Big(\frac{\rho_2}{\rho_4}\Big) +
    \Lic_{2}\Big(\frac{\rho_{2,3}}{\rho_{1,3}}\Big) +
    \Lic_{2}\Big(\frac{\rho_{2,4}}{\rho_{3,4}}\Big) \,.
    \end{align*}
Since the second and the third $\II_{1,1}$ terms above
get interchanged by $\cyc{1,6}\cyc{2,5}\cyc{3,4}\cyc{7,8}$,
applying this identity to the expression in the first
square brackets of~\eqref{eq:a1} we get that~\eqref{eq:a1}
is equal to
    \begin{align}
    \label{eq:a2}
    \begin{split}
    \Big[&\Lic_{2}\Big(\frac{\rho_1}{\rho_2}\Big)
    + \cancel{\Lic_{2}\Big(\frac{\rho_2}{\rho_4}\Big)}
    + \Lic_{2}\Big(\frac{\rho_{2,3}}{\rho_{1,3}}\Big)
    + \Lic_{2}\Big(\frac{\rho_{2,4}}{\rho_{3,4}}\Big)\Big]
    \otimes U
    \otimes U \\
    +\Big[&\Lic_{2}\Big(\frac{\rho_{1,2}\rho_{3,4}}
    {\rho_{3,2}\rho_{1,4}}\Big)
    \otimes\frac{\rho_4}{\rho_1}
    +\cancel{\Lic_{2}\Big(\frac{\rho_1}{\rho_{1,4}}\Big)
        \otimes\frac{\rho_{1,3}\rho_{2,4}}{\rho_{1,2}\rho_{3,4}}}
    +2\Lic_{2}\Big(\frac{\rho_{1,2}}{\rho_1}\Big)
    \otimes\frac{\cancel{\rho_{4,2}}}{\rho_{3,2}}
    +2\Lic_{2}\Big(\frac{\rho_{3,2}}{\rho_{3,4}}\Big)
    \otimes\frac{\rho_2}{\rho_{2,1}}\Big] \otimes U \\
    -&\Lic_{2}\Big(\frac{\rho_{1,2}\rho_{3,4}}
    {\rho_{3,2}\rho_{1,4}}\Big)
    \otimes \frac{\rho_{1,2}\rho_{3,4}}
    {\rho_{3,2}\rho_{1,4}} \otimes\frac{\rho_4}{\rho_1}
    +2\Lic_{2}\Big(\frac{\rho_{1,2}}{\rho_1}\Big)
    \otimes \frac{\rho_{1,2}}{\rho_1} \otimes \rho_{3,2}
    +2\Lic_{2}\Big(\frac{\rho_{3,2}}{\rho_{3,4}}\Big)
    \otimes \frac{\rho_{3,2}}{\rho_{3,4}} \otimes \frac{\rho_2}{\rho_{2,1}}  \,.
    \end{split}
    \end{align}
We claim that the three crossed-out terms vanish under $\Alt_8$.
Indeed, the first term is
    \begin{align*}
    \Lic_{2}\Big(\frac{\rho_2}{\rho_4}\Big)\otimes U \otimes U
    = \Lic_{2}\Big(\frac{\rho_2}{\rho_4}\Big)\otimes
    \Big[
    \frac{\rho_1}{\rho_{1,2}}\otimes \frac{\rho_1}{\rho_{1,2}}
    +\frac{\rho_1}{\rho_{1,2}}\otimes \frac{\rho_{3,2}}{\rho_{3,4}}
    +\frac{\rho_{3,2}}{\rho_{3,4}}\otimes \frac{\rho_1}{\rho_{1,2}}
    +\frac{\rho_{3,2}}{\rho_{3,4}}\otimes \frac{\rho_{3,2}}{\rho_{3,4}}
    \Big] \,.
    \end{align*}
Here the term $\Lic_{2}(\frac{\rho_2}{\rho_4})\otimes
\frac{\rho_{3,2}}{\rho_{3,4}}
\otimes \frac{\rho_{3,2}}{\rho_{3,4}}$ vanishes after $\Alt_8$ since the
even permutation $\cyc{2,6}\cyc{3,5}$ changes its sign, and the other three
summands vanish by~\eqref{eq:r32r34}.
More generally,~\eqref{eq:r32r34} shows that
$\Alt_8 f(\rho_1,\rho_2,\rho_4)\otimes U = 0$,
and by applying the symmetry $\cyc{1,6}\cyc{2,5}\cyc{3,4}\cyc{7,8}$
we see also that $\Alt_8 f(\rho_1,\rho_3,\rho_4)\otimes U = 0$.
This immediately gives us the vanishing of the third crossed-out
term $\Lic_{2}(\frac{\rho_{1,2}}{\rho_1})\otimes \rho_{4,2}\otimes U$,
and for the second crossed-out term we have
    \begin{align*}
    \Lic_{2}\Big(\frac{\rho_1}{\rho_{1,4}}\Big)
    \otimes \frac{\rho_{1,3}\rho_{2,4}}{\rho_{1,2}\rho_{3,4}}
    \otimes U \modalt
    \Lic_{2}\Big(\frac{\rho_1}{\rho_{1,4}}\Big)
    \otimes \frac{\rho_{2,4}}{\rho_{1,2}}
    \otimes U
    +\Lic_{2}\Big(\frac{\rho_1}{\rho_{1,4}}\Big)
    \otimes \frac{\rho_{1,3}}{\rho_{3,4}}
    \otimes U \modalt 0\,.
    \end{align*}
Next, we apply the five-term relation
    \begin{equation}  \label{eq:fiveterm1}
    \Lic_2\Big(\frac{\rho_{1,2}\rho_{3,4}}{\rho_{3,2}\rho_{1,4}}\Big)
    =\Lic_2\Big(\frac{\rho_{2,3}}{\rho_{1,3}}\Big)
    +\Lic_2\Big(\frac{\rho_{2,4}}{\rho_{3,4}}\Big)
    +\Lic_2\Big(\frac{\rho_{1,4}}{\rho_{1,3}}\Big)
    +\Lic_2\Big(\frac{\rho_{1,4}}{\rho_{2,4}}\Big)
    \end{equation}
to the two $\Lic_2(\frac{\rho_{1,2}\rho_{3,4}}{\rho_{3,2}\rho_{1,4}})$
terms in~\eqref{eq:a2} and reorder the resulting expression by
the $\Lic_2$ terms:
    \begin{align*}
    \Lic_{2}\Big(\frac{\rho_1}{\rho_2}\Big)&\otimes\Big[
    U\otimes U
    -2\rho_{3,2}\otimes U
    + 2\frac{\rho_{1,2}}{\rho_{1}}\otimes \rho_{3,2}
    \Big] \\
    +\Lic_{2}\Big(\frac{\rho_{2,4}}{\rho_{3,4}}\Big)&\otimes\Big[
    U\otimes U
    +\frac{\rho_4}{\rho_1}\otimes U
    -2\frac{\rho_{2}}{\rho_{2,1}}\otimes U
    -\frac{\rho_{1,2}\rho_{3,4}}{\rho_{3,2}\rho_{1,4}}
    \otimes\frac{\rho_4}{\rho_1}
    -2\frac{\rho_{3,2}}{\rho_{3,4}}\otimes \frac{\rho_2}{\rho_{2,1}}
    \Big]\\
    +\Lic_{2}\Big(\frac{\rho_{1,4}}{\rho_{2,4}}\Big)&\otimes\Big[
    \frac{\rho_{4}}{\rho_{1}}\otimes U
    -\frac{\rho_{1,2}\rho_{3,4}}{\rho_{3,2}\rho_{1,4}}
    \otimes\frac{\rho_4}{\rho_1}
    \Big]
    +\Lic_{2}\Big(\frac{\rho_{2,3}}{\rho_{1,3}}\Big)\otimes\Big[
    U\otimes U
    +\frac{\rho_{4}}{\rho_{1}}\otimes U
    -\frac{\rho_{1,2}\rho_{3,4}}{\rho_{3,2}\rho_{1,4}}
    \otimes\frac{\rho_4}{\rho_1}\Big]\\
    +\Lic_{2}\Big(\frac{\rho_{1,4}}{\rho_{1,3}}\Big)&\otimes\Big[
    \frac{\rho_{4}}{\rho_{1}}\otimes U
    -\frac{\rho_{1,2}\rho_{3,4}}{\rho_{3,2}\rho_{1,4}}
    \otimes\frac{\rho_4}{\rho_1}
    \Big]\,.
    \end{align*}
As before, the terms with $\Lic_2(\frac{\rho_{1,4}}{\rho_{1,3}})$
and $\Lic_2(\frac{\rho_{1,4}}{\rho_{2,4}})$ cancel out under
$\Alt_8$ since they expand into a sum of terms fixed by transpositions.
Moreover, the odd permutation $\cyc{1,6}\cyc{2,5}\cyc{3,4}$ maps
$\Lic_{2}(\frac{\rho_{2,3}}{\rho_{1,3}})$ to
$\Lic_{2}(\frac{\rho_{2,4}}{\rho_{3,4}})$ and
$U$ to $\frac{\rho_4}{\rho_1}U$, so we get
    \begin{align*}
    \Lic_{2}\Big(\frac{\rho_1}{\rho_2}\Big)&\otimes\Big[
    U\otimes U
    -2\rho_{3,2}\otimes U
    + 2\frac{\rho_{1,2}}{\rho_{1}}\otimes \rho_{3,2}
    \Big]
    +\Lic_{2}\Big(\frac{\rho_{2,4}}{\rho_{3,4}}\Big)\otimes\Big[
    U\otimes U
    +\frac{\rho_4}{\rho_1}\otimes U\\
    -2\frac{\rho_{2}}{\rho_{2,1}}&\otimes U
    -\frac{\rho_{1,2}\rho_{3,4}}{\rho_{3,2}\rho_{1,4}}
    \otimes\frac{\rho_4}{\rho_1}
    -2\frac{\rho_{3,2}}{\rho_{3,4}}\otimes \frac{\rho_2}{\rho_{2,1}}
    -\frac{\rho_4}{\rho_1}U\otimes \frac{\rho_4}{\rho_1}U
    +\frac{\rho_{4}}{\rho_{1}}\otimes \frac{\rho_4}{\rho_1}U
    -\frac{\rho_{1,2}\rho_{3,4}}{\rho_{3,2}\rho_{1,4}}
    \otimes\frac{\rho_4}{\rho_1}
    \Big]\\
    \modalt
    \Lic_{2}\Big(\frac{\rho_1}{\rho_2}\Big)&\otimes\Big[
    \frac{\rho_{3,2}}{\rho_{3,4}}\otimes \frac{\rho_{3,2}}{\rho_{3,4}}
    -2\rho_{3,2}\otimes \frac{\rho_1\rho_{3,2}}{\rho_{1,2}\rho_{3,4}}
    + 2\frac{\rho_{1,2}}{\rho_{1}}\otimes \rho_{3,2}
    \Big] \\
    +\Lic_{2}\Big(\frac{\rho_{2,4}}{\rho_{3,4}}\Big)&\otimes\Big[
    \frac{\rho_4}{\rho_1}\otimes U
    -\frac{\rho_{1,2}\rho_{3,4}}{\rho_{3,2}\cancel{\rho_{1,4}}}
    \otimes\frac{\rho_4}{\rho_1}
    -2\frac{\rho_{2}}{\rho_{2,1}}\otimes U
    -2\frac{\rho_{3,2}}{\rho_{3,4}}\otimes \frac{\rho_2}{\rho_{2,1}}
    -\frac{\rho_1}{\cancel{\rho_{1,4}}}
    \otimes\frac{\rho_4}{\rho_1}
    \Big]
    \,,
    \end{align*}
where we again have used~\eqref{eq:r32r34}.
Here $\Lic_{2}(\frac{\rho_{2,4}}{\rho_{3,4}})
\otimes \rho_{1,4} \otimes \frac{\rho_4}{\rho_1} \modalt 0$, since
after expanding out $\Lic_2(\frac{\rho_{2,4}}{\rho_{3,4}})$ all
the terms will cancel by transpositions except for the term
    \[\detv{2348}\wedge \detv{3458}
    \otimes \rho_{1,4} \otimes \frac{\rho_4}{\rho_1}\,,\]
which vanishes under $\Alt_8$ since it is fixed by the odd
permutation $\cyc{1,6}\cyc{2,5}\cyc{3,4}$. Thus, in view of
Lemma~\ref{lem:gr4li2} it is enough to prove the following identity
    \begin{align}\label{eq:penul}
    \begin{split}
    &\Lic_{2}\Big(\frac{\rho_{2,4}}{\rho_{3,4}}\Big)
    \otimes\Big[
    \frac{\rho_4}{\rho_1}\otimes \frac{\rho_1\rho_{3,2}}{\rho_{1,2}\rho_{3,4}}
    -\frac{\rho_1\rho_{1,2}\rho_{3,4}}{\rho_{3,2}}
    \otimes\frac{\rho_4}{\rho_1}
    -2\frac{\rho_{2}}{\rho_{2,1}}\otimes
    \frac{\rho_1\rho_{3,2}}{\rho_{1,2}\rho_{3,4}}
    -2\frac{\rho_{3,2}}{\rho_{3,4}}\otimes \frac{\rho_2}{\rho_{2,1}}\\
    &+8\rho_{1,2}\otimes\frac{\rho_4}{\rho_1}
    \Big]
    +\Lic_{2}\Big(\frac{\rho_1}{\rho_2}\Big)\otimes\Big[
    \frac{\rho_{3,2}}{\rho_{3,4}}\otimes \frac{\rho_{3,2}}{\rho_{3,4}}
    -2\rho_{3,2}\otimes \frac{\rho_1\rho_{3,2}}{\rho_{1,2}\rho_{3,4}}
    + 2\frac{\rho_{1,2}}{\rho_{1}}\otimes \rho_{3,2}
    \Big] \modalt  0\,.
    \end{split}
    \end{align}
To prove~\eqref{eq:penul} we decompose it into parts that are
symmetric and skew-symmetric in the last two tensor positions.

\smallskip
\noindent\textbf{Skew-symmetric part.} For the skew-symmetric part
of~\eqref{eq:penul} we need to show that
    \begin{align*}
    \Lic_{2}\Big(\frac{\rho_{2,4}}{\rho_{3,4}}\Big)
    \otimes\Big[
    -\frac{\rho_{2}}{\rho_{2,1}}\wedge
    \frac{\rho_1}{\rho_{1,2}}
    +4\rho_{1,2}\wedge\frac{\rho_4}{\rho_1}
    -\cancel{\rho_1\wedge\rho_4}
    \Big] +\Lic_{2}\Big(\frac{\rho_1}{\rho_2}\Big)
    \otimes (\rho_{3,2}\wedge \rho_{3,4}) \modalt 0\,.
    \end{align*}
Here the term with $\rho_1\wedge\rho_4$ cancels for the same reason
as the term $\Lic_{2}(\frac{\rho_{2,4}}{\rho_{3,4}})
\otimes \rho_{1,4} \otimes \frac{\rho_4}{\rho_1}$ above.
Using~\eqref{eq:r32r34} and the mod-products symbol for $\Lic_2$
we get
    \begin{align*}
    \Alt_8\Big[
    (\rho_{3,2}\wedge \rho_{3,4})\otimes
    \Big[
    -\Lic_{2}\Big(\frac{\rho_1}{\rho_{2}}\Big)
    +4\rho_{1,2}\wedge\frac{\rho_4}{\rho_1}
    \Big]
    +\Lic_{2}\Big(\frac{\rho_1}{\rho_2}\Big)
    \otimes (\rho_{3,2}\wedge \rho_{3,4})
    \Big]\,.
    \end{align*}
Since the original expression that we are computing (i.e.,
$\Sl^\shuffle$ applied to the RHS of~\eqref{eq:gr4i31_2})
lies in the image of the projector $\Pid_4$, and since
symmetrizing $\Pid_4(a\otimes b\otimes c\otimes d)$ in
the last two tensor positions results in $(a\wedge b)\wedge (c\wedge d)$,
we see that the above expression is skew-symmetric under
interchanging the first two and the last two tensors.
Thus we need to show
    \begin{align*}
    (\rho_{3,2}\wedge \rho_{3,4})
    \otimes\Big[
    -2\Lic_{2}\Big(\frac{\rho_1}{\rho_{2}}\Big)
    +4\rho_{1,2}\wedge\frac{\cancel{\rho_4}}{\rho_1}
    \Big] \modalt 0\,.
    \end{align*}
The crossed-out term vanishes by
    \begin{align*}
    (\rho_{3,2}\wedge \rho_{3,4})\otimes
    (\rho_{1,2}\wedge \rho_4)
    &\modalt
    -(\rho_{3,2}\wedge \rho_{2,1})\otimes
    (\rho_{4,3}\wedge \rho_1) \\
    &\modalt 14 (\detv{2345}\wedge\detv{3456}) \otimes
    (\detv{1234}\wedge\detv{4567}) \modalt 0\,,
    \end{align*}
where in the first equality we have applied the
permutation $\cyc{1,6}\cyc{2,5}\cyc{3,4}$, in the second
we have used~\eqref{eq:rho4321}, and in the third we have
used the fact that the term is fixed by an odd permutation
$\cyc{1,7}\cyc{2,6}\cyc{3,5}$. Thus we only need to prove
    \begin{align} \label{eq:claim_skew}
    (\rho_{3,2}\wedge \rho_{3,4})
    \otimes\Big[
    \rho_{1,2}\wedge\rho_1
    +\rho_{1,2}\wedge\rho_2
    -\rho_1\wedge\rho_2
    \Big] \modalt 0\,.
    \end{align}
We claim that
    \begin{align}
    \label{eq:rho4321b}
    \begin{split}
    \Alt_8\Big[\rho_{4,3}\otimes\rho_{3,2}
    \otimes\frac{\rho_{1,2}}{\rho_1}\otimes \rho_2\Big]
    = \Alt_8\Big[&\detv{1234}\otimes \detv{2345}
    \otimes \detv{3456} \otimes \detv{4567} \\
    -13&\detv{1234}\otimes \detv{2345}
    \otimes \detv{4567} \otimes \detv{3456}\Big]\,.
    \end{split}
    \end{align}
Taking the skew-symmetrization of~\eqref{eq:rho4321}
and~\eqref{eq:rho4321b}
in the last two tensor positions we obtain~\eqref{eq:claim_skew}.
We can prove~\eqref{eq:rho4321b} by direct expansion using the notation
from the proof of Theorem~\ref{thm:i211}:
    \[
    \rho_{4,3}
    \otimes\rho_{3,2}
    \otimes\frac{\rho_{1,2}}{\rho_1}
    \otimes \rho_2
    =
    \frac{A_3D_3}{C_3C_2}
    \otimes
    \frac{A_2D_2}{C_2C_1}
    \otimes
    \frac{A_1D_1}{B_1C_1}
    \otimes
    \frac{B_2}{C_1}
    \,.\]
Omitting any terms that are invariant under odd permutations we get
    \begin{align*}
     &A_3\otimes A_2\otimes \tfrac{D_1}{B_1}\otimes B_2
     +D_3\otimes D_2\otimes \tfrac{A_1}{B_1}\otimes B_2
     +C_2\otimes A_2D_2\otimes B_1\otimes B_2
     -C_2\otimes C_1\otimes B_1\otimes B_2\\
     +&A_3\otimes \tfrac{C_2}{A_2}\otimes D_1\otimes C_1
      +D_3\otimes \tfrac{C_2}{D_2}\otimes A_1\otimes C_1
      -C_3\otimes C_2\otimes A_1D_1\otimes C_1
      -C_3\otimes C_2\otimes C_1\otimes B_2\,,
    \end{align*}
where $C_3\otimes C_2\otimes C_1 \otimes B_2$ is equivalent to
$\detv{1234}\otimes\detv{2345}\otimes\detv{3456}\otimes\detv{4567}$,
while the other $13$ terms are equivalent to
$\detv{1234}\otimes\detv{2345}\otimes\detv{4567}\otimes\detv{3456}$.

\smallskip
\noindent\textbf{Symmetric part.} For the symmetric part we need to prove
    \begin{align} \label{eq:symid0}
    \begin{split}
    &\Lic_{2}\Big(\frac{\rho_{2,4}}{\rho_{3,4}}\Big)
    \otimes\Big[
    \frac{\rho_4}{\rho_1}\odot \frac{\rho_{3,2}}{\rho_{3,4}}
    -2\frac{\rho_{3,2}}{\rho_{3,4}}\odot \frac{\rho_2}{\rho_{2,1}}
    -\frac{\rho_{2}}{\rho_{2,1}}\odot \frac{\rho_1}{\rho_{1,2}}
    +3\rho_{1,2}\odot\frac{\rho_4}{\rho_1}
    \Big]\\
    +&\Lic_{2}\Big(\frac{\rho_1}{\rho_2}\Big)\otimes\Big[
    \frac{1}{2}\rho_{3,4}\odot \rho_{3,4}
    -\frac{1}{2}\rho_{3,2}\odot \rho_{3,2}
    -2\frac{\rho_{1}}{\rho_{1,2}}\odot \rho_{3,2}
    \Big] \modalt  0\,,
    \end{split}
    \end{align}
where we denote $a\odot b = a\otimes b + b\otimes a$.
\begin{lemma}
    The following identities hold
    \begin{equation}\label{eq:symid1}
    \Lic_{2}\Big(\frac{\rho_{2,4}}{\rho_{3,4}}\Big)
    \otimes\Big[
    \frac{\rho_4}{\rho_1}\odot \frac{\rho_{3,2}}{\rho_{3,4}}
    -2\frac{\rho_{3,2}}{\rho_{3,4}}\odot \frac{\rho_2}{\rho_{2,1}}
    +3\rho_{1,2}\odot\frac{\rho_4}{\rho_1}
    \Big] \modalt
    2 \Lic_{2}\Big(\frac{\rho_1}{\rho_2}\Big)\otimes\Big[
    \frac{\rho_{1}}{\rho_{1,2}}\odot \rho_{3,2}
    \Big]\,,
    \end{equation}
    and
    \begin{equation}\label{eq:symid2}
    \Lic_{2}\Big(\frac{\rho_{2,4}}{\rho_{3,4}}\Big)
    \otimes\Big[
    -\frac{\rho_{2}}{\rho_{2,1}}\odot \frac{\rho_1}{\rho_{1,2}}
    \Big]
    +\Lic_{2}\Big(\frac{\rho_1}{\rho_2}\Big)\otimes\Big[
    \frac{1}{2}\rho_{3,4}\odot \rho_{3,4}
    -\frac{1}{2}\rho_{3,2}\odot \rho_{3,2}
    \Big] \modalt  0\,.
    \end{equation}
\end{lemma}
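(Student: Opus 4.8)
The plan is to prove the two displayed identities separately; adding them gives exactly the symmetric-part identity~\eqref{eq:symid0}, which is all that remains for Theorem~\ref{thm:gr4toi31_2}. Throughout one works modulo the kernel of $\Alt_8$, writing ``$\modalt$'' as above. First I would expand each $\mathcal{S}^{\shuffle}\Lic_2$ via $\mathcal{S}^{\shuffle}\Lic_2(x)=x\wedge(1-x)$, written multiplicatively in the letters $\{\rho_i,\rho_{i,j}\}$: since $1-\tfrac{\rho_{2,4}}{\rho_{3,4}}=\tfrac{\rho_{3,2}}{\rho_{3,4}}$ and $1-\tfrac{\rho_1}{\rho_2}=\tfrac{\rho_{2,1}}{\rho_2}$, each $\Lic_2$ factor becomes a sum of wedges $a\wedge b$ in these letters, and each letter is a product of determinant factors $\detv{abcd}$ (possibly divided by $\detv{abc7}$ or $\detv{abc8}$). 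Distributing the outer $\otimes$ turns both sides of~\eqref{eq:symid1} and of~\eqref{eq:symid2} into explicit $\ZZ$-linear combinations of rank-one tensors in these letters.

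Next I would discard the rank-one tensors that vanish under $\Alt_8$ for formal reasons. The workhorse is~\eqref{eq:r32r34} together with its images under $\cyc{7,8}$ (which inverts every $\rho_i$) and under $\cyc{1,6}\cyc{2,5}\cyc{3,4}\cyc{7,8}$: a rank-one tensor dies if all of its legs depend only on $\rho_1,\rho_2,\rho_4$, or if one leg is $\cyc{2,3}$-invariant (resp. $\cyc{5,6}$-invariant) while the remaining legs depend only on $\rho_1,\rho_2,\rho_4$, because $\rho_1,\rho_2,\rho_4$ are fixed by the odd permutations $\cyc{2,3}$ and $\cyc{5,6}$ and $\tfrac{\rho_{3,2}}{\rho_{3,4}}$ factors as a $\cyc{2,3}$-invariant times a $\cyc{5,6}$-invariant. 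This is precisely the mechanism used to eliminate the crossed-out terms just above, and it removes the bulk of the expansion on both sides.

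For the surviving tensors I would expand the $\detv{abcd}$ into elementary tensors, throw away every elementary tensor fixed by a transposition in $\mathfrak{S}_8$, and normalize the rest by the permutation-chasing already used in the proof of Theorem~\ref{thm:i211}: every survivor is $\Alt_8$-equivalent to one of the two standard tensors $\detv{1234}\otimes\detv{2345}\otimes\detv{3456}\otimes\detv{4567}$ and its image under transposing the last two legs. The scalar identities~\eqref{eq:rho4321} and~\eqref{eq:rho4321b} already supply the two normalizations one meets here, and the remaining elementary pieces are handled by entirely analogous (but longer) expansions. Alternatively, before expanding one can apply the five-term relation~\eqref{eq:fiveterm1} together with the odd dihedral symmetry $\cyc{1,6}\cyc{2,5}\cyc{3,4}$ to rewrite the $\Lic_2\bigl(\tfrac{\rho_{2,4}}{\rho_{3,4}}\bigr)$ contributions in terms of $\Lic_2\bigl(\tfrac{\rho_1}{\rho_2}\bigr)$-type terms, which cuts down the number of tensors one has to track; comparing coefficients at the end then gives $0$ on the right of~\eqref{eq:symid1} (after moving its right-hand side over) and of~\eqref{eq:symid2}.

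The only real difficulty is the combinatorial bookkeeping of the last step: among the several hundred elementary tensors produced one must correctly single out the handful that are not fixed by any transposition and compute their canonical representatives consistently. This is mechanical but error-prone, which is why these identities (like the weight-$5$ statement in Theorem~\ref{thm:gr5toi41}) have also been verified symbolically on a computer.
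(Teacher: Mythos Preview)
Your plan is in principle sound---a full symbolic expansion into elementary $\detv{abcd}$-tensors, killing transposition-fixed terms and normalizing survivors to the two standard tensors, will certainly verify both identities---but it is a different route from the paper's, and your sketch glosses over exactly the part where the work lies. The paper does \emph{not} brute-force expand. For~\eqref{eq:symid1} it first uses the $\cyc{7,8}$ trick: since $\cyc{7,8}$ sends $\Lic_2(\rho_{2,4}/\rho_{3,4})$ to $\Lic_2(\rho_3\rho_{2,4}/(\rho_2\rho_{3,4}))$, one can replace $\Lic_2(\rho_{2,4}/\rho_{3,4})\otimes S$ by $\tfrac12\bigl[\Lic_2(\rho_{2,4}/\rho_{3,4})-\Lic_2(\rho_3\rho_{2,4}/(\rho_2\rho_{3,4}))\bigr]\otimes S$ whenever $S$ is $\cyc{7,8}$-invariant, and then invoke a \emph{different} five-term relation~\eqref{eq:fivetermr24r34} (relating $\Lic_2(\rho_4/\rho_3),\Lic_2(\rho_3/\rho_2),\Lic_2(\rho_4/\rho_2)$ to this difference), not~\eqref{eq:fiveterm1}. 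After this, the paper decomposes~\eqref{eq:symid1} into seven elementary sub-identities~\eqref{eq:eqb1}--\eqref{eq:eqb7}, each dispatched by a one-line symmetry argument or by one of two new auxiliary identities~\eqref{eq:rho4321c},~\eqref{eq:rho4321d} analogous to~\eqref{eq:rho4321b}. For~\eqref{eq:symid2} the same $\cyc{7,8}$/five-term manoeuvre reduces everything to the clean statement $(\rho_{4,3}\otimes\rho_{3,2}-\rho_{3,2}\otimes\rho_{2,1})\otimes\rho_1\otimes\rho_1\modalt 0$, which is then checked by a short explicit expansion with transposition cancellations.

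What the paper's approach buys is a human-checkable proof with no ``several hundred elementary tensors''; what your approach buys is uniformity (no cleverness needed, just bookkeeping). Your remark that~\eqref{eq:fiveterm1} converts $\Lic_2(\rho_{2,4}/\rho_{3,4})$ into $\Lic_2(\rho_1/\rho_2)$-type terms is not quite right: \eqref{eq:fiveterm1} introduces the double-ratio term $\Lic_2(\rho_{1,2}\rho_{3,4}/(\rho_{3,2}\rho_{1,4}))$ and four single-difference terms, none of which is a $\rho_i/\rho_j$; the relation that actually lands on $\Lic_2(\rho_i/\rho_j)$ is~\eqref{eq:fivetermr24r34}, and it only becomes applicable after the $\cyc{7,8}$ splitting.
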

\begin{proof}
First, we prove~\eqref{eq:symid1}. By rearranging the terms
in the first set of square brackets we get an equivalent identity
    \begin{equation} \label{eq:symid1a}
    \Lic_{2}\Big(\frac{\rho_{2,4}}{\rho_{3,4}}\Big)
    \otimes\Big[
    \cancel{\frac{\rho_4}{\rho_2}\odot \frac{\rho_{3,2}}{\rho_{3,4}}}
    -\frac{\rho_2\rho_1}{\rho_{2,1}^2}\odot\frac{\rho_{3,2}}{\rho_{3,4}}
    +3\rho_{1,2}\odot\frac{\rho_4}{\rho_1}
    \Big]
    -2\Lic_{2}\Big(\frac{\rho_1}{\rho_2}\Big)\otimes\Big[
    \frac{\rho_{1}}{\rho_{1,2}}\odot \rho_{3,2}
    \Big]    \modalt 0   \,,
    \end{equation}
where the crossed-out term vanishes since it is anti-invariant
under $\cyc{2,6}\cyc{3,5}$. We rewrite
    \begin{equation*}
    -\frac{\rho_2\rho_1}{\rho_{2,1}^2}
    \odot\frac{\rho_{3,2}}{\rho_{3,4}}
    = -\frac{1}{2}
    \frac{\rho_2\rho_1}{\rho_{2,1}^2}
    \odot\frac{\rho_{3,2}^2\rho_4}{\rho_{3,4}^2\rho_2}
    +\frac{1}{2}
    \frac{\rho_2\rho_1}{\rho_{2,1}^2}
    \odot\frac{\rho_{4}}{\rho_{2}}\,,
    \end{equation*}
where the first term on the right is fixed by $\cyc{7,8}$.
Since $\cyc{7,8}$ maps $\Lic_2(\frac{\rho_{2,4}}{\rho_{3,4}})$
to $\Lic_2(\frac{\rho_{3}\rho_{2,4}}{\rho_{2}\rho_{3,4}})$,
we have
    \begin{equation} \label{eq:li2split}
    \Lic_2\Big(\frac{\rho_{2,4}}{\rho_{3,4}}\Big)
    \otimes\Big[\frac{\rho_2\rho_1}{\rho_{2,1}^2}
    \odot\frac{\rho_{3,2}^2\rho_4}{\rho_{3,4}^2\rho_2}\Big]
    \modalt
    \frac{1}{2}\Big[
    \Lic_2\Big(\frac{\rho_{2,4}}{\rho_{3,4}}\Big)
    -\Lic_2\Big(\frac{\rho_{3}\rho_{2,4}}{\rho_{2}\rho_{3,4}}\Big)
    \Big]
    \otimes\Big[\frac{\rho_2\rho_1}{\rho_{2,1}^2}
    \odot\frac{\rho_{3,2}^2\rho_4}{\rho_{3,4}^2\rho_2}\Big]
    \,,
    \end{equation}
and hence we can use the five-term relation
    \begin{equation} \label{eq:fivetermr24r34}
    \Lic_2\Big(\frac{\rho_{4}}{\rho_{3}}\Big)
    +\Lic_2\Big(\frac{\rho_{3}}{\rho_{2}}\Big)
    =\Lic_2\Big(\frac{\rho_{3}\rho_{2,4}}{\rho_{2}\rho_{3,4}}\Big)
    -\Lic_2\Big(\frac{\rho_{2,4}}{\rho_{3,4}}\Big)
    +\Lic_2\Big(\frac{\rho_{4}}{\rho_{2}}\Big)
    \end{equation}
to rewrite the LHS of~\eqref{eq:symid1a} as
    \begin{align} \label{eq:symid1b}
    \begin{split}
    &\frac{1}{4}\Big[
    \Lic_{2}\Big(\frac{\rho_{3}}{\rho_{2}}\Big)
    +\Lic_{2}\Big(\frac{\rho_{4}}{\rho_{3}}\Big)
    -\cancel{\Lic_{2}\Big(\frac{\rho_{4}}{\rho_{2}}\Big)}
    \Big]\otimes
    \Big[
    \frac{\cancel{\rho_2\rho_1}}{\rho_{2,1}^2}
    \odot\frac{\rho_{3,2}^2\rho_{4}}{\rho_{3,4}^2\rho_{2}}
    \Big]\\
    &+\Lic_{2}\Big(\frac{\rho_{2,4}}{\rho_{3,4}}\Big)
    \otimes\Big[
    \frac{1}{2}\frac{\rho_2\rho_1}{\rho_{2,1}^2}
    \odot\frac{\rho_{4}}{\rho_{2}}
    +3\rho_{1,2}\odot\frac{\rho_4}{\rho_1}
    \Big] - 2\Lic_{2}\Big(\frac{\rho_1}{\rho_2}\Big)\otimes\Big[
    \frac{\rho_{1}}{\rho_{1,2}}\odot \rho_{3,2}
    \Big]    \,.
    \end{split}
    \end{align}
Here the first crossed-out term cancels by~\eqref{eq:r32r34}
and the second crossed-out term cancels since it is invariant
under $\cyc{7,8}$. We claim that~\eqref{eq:symid1b} vanishes
as a corollary of the following identities:
    \begin{align}
    &\Lic_{2}\Big(\frac{\rho_{2,4}}{\rho_{3,4}}\Big)
    \otimes\Big[\rho_{2}\odot \rho_4 \Big] \modalt 0\,,
    \label{eq:eqb1}\\
    &\Lic_{2}\Big(\frac{\rho_{4}}{\rho_{3}}\Big)
    \otimes\Big[\rho_{1,2}\odot \rho_{3,4}\Big]
    -\Lic_2\Big(\frac{\rho_{3}}{\rho_{2}}\Big)
    \otimes\Big[\rho_{3,2}\odot\rho_{2,1}\Big]
    +2\Lic_{2}\Big(\frac{\rho_{1}}{\rho_{2}}\Big)
    \otimes\Big[\rho_{3,2}\odot\rho_{2,1}\Big]
    \modalt 0\,,
    \label{eq:eqb2}\\
    &\Lic_{2}\Big(\frac{\rho_{4}}{\rho_{3}}\Big)
    \otimes\Big[\rho_{1,2}\odot \rho_{2}\Big]
    +\Lic_2\Big(\frac{\rho_{3}}{\rho_{2}}\Big)
    \otimes\Big[\rho_{1,2}\odot \rho_{2}\Big]
    +\Lic_{2}\Big(\frac{\rho_{2,4}}{\rho_{3,4}}\Big)
    \otimes\Big[
    \rho_2\odot \rho_1\rho_2
    \Big] \modalt 0\,,
    \label{eq:eqb5}\\
    &\Lic_{2}\Big(\frac{\rho_{4}}{\rho_{3}}\Big)
    \otimes\Big[
    \rho_{1,2}\odot \rho_{4}
    \Big]
    +\Lic_2\Big(\frac{\rho_{3}}{\rho_{2}}\Big)
    \otimes\Big[
    \rho_{1,2}\odot \rho_{4}
    \Big]
    +\Lic_{2}\Big(\frac{\rho_{2,4}}{\rho_{3,4}}\Big)
    \otimes\Big[
    \rho_4\odot \rho_1\rho_2
    \Big] \modalt 0\,,
    \label{eq:eqb3}\\
    &\Lic_{2}\Big(\frac{\rho_{4}}{\rho_{3}}\Big)
    \otimes\Big[
    \rho_{1,2}\odot\rho_{2,3}
    \Big]
    +2\Lic_{2}\Big(\frac{\rho_{2,4}}{\rho_{3,4}}\Big)
    \otimes\Big[
    \rho_{1,2}\odot\rho_1
    \Big] \modalt 0\,,
    \label{eq:eqb6}\\
    &\Lic_{2}\Big(\frac{\rho_{2,4}}{\rho_{3,4}}\Big)
    \otimes\Big[
    \rho_{2}\odot\frac{\rho_{1,2}}{\rho_1\rho_2}
    -\rho_{1,2}\odot\rho_1
    \Big] \modalt 0\,,
    \label{eq:eqb7}\\
    &\Lic_2\Big(\frac{\rho_{3}}{\rho_{2}}\Big)
    \otimes\Big[
    \rho_{1,2}\odot\rho_{3,4}
    \Big]
    -2\Lic_{2}\Big(\frac{\rho_{1}}{\rho_{2}}\Big)
    \otimes\Big[\rho_{3,2}\odot\rho_1
    \Big]
    +\Lic_{2}\Big(\frac{\rho_{2,4}}{\rho_{3,4}}\Big)
    \otimes\Big[
    \rho_{1}\rho_{1,2}^2\odot \rho_4
    \Big] \modalt 0\,.
    \label{eq:eqb4}
    \end{align}
More precisely, one gets~\eqref{eq:symid1b} by summing up the above $7$
identities with coefficients given by $(1,1,1/2,-1/2,-1,1,1)$. We prove these
identities as follows. Relation~\eqref{eq:eqb1} follows from
anti-invariance under $\cyc{2,6}\cyc{3,5}$. Equation~\eqref{eq:eqb2}
after rearranging terms (using cyclic shifts
and $\cyc{1,6}\cyc{2,5}\cyc{3,4}$) becomes
$\Lic_2(\frac{\rho_1}{\rho_2})\otimes\frac{\rho_{3,2}}{\rho_{3,4}}\odot
\rho_{2,1}\modalt 0$, which is true by~\eqref{eq:r32r34}.
Equation~\eqref{eq:eqb5}, after applying $\cyc{7,8}$ and splitting
$\Lic_{2}(\frac{\rho_{2,4}}{\rho_{3,4}})$ analogously
to~\eqref{eq:li2split} becomes
    \[\frac{1}{2}\Big[\Lic_{2}\Big(\frac{\rho_{4}}{\rho_{3}}\Big)
    +\Lic_{2}\Big(\frac{\rho_{3}}{\rho_{2}}\Big)+
    \Lic_{2}\Big(\frac{\rho_{2,4}}{\rho_{3,4}}\Big)
    -\Lic_{2}\Big(\frac{\rho_{3}\rho_{2,4}}{\rho_{2}\rho_{3,4}}\Big)\Big]
    \otimes \Big[\rho_{2}\odot\rho_{1}\rho_{2}\Big]
    \modalt 0\,, \]
which clearly follows from~\eqref{eq:fivetermr24r34} and~\eqref{eq:r32r34}.
Identity~\eqref{eq:eqb3} is proved completely analogously to~\eqref{eq:eqb5}.
Equation~\eqref{eq:eqb6} follows from~\eqref{eq:rho4321}
and \eqref{eq:rho4321b} after using the symmetry $\cyc{1,6}\cyc{2,5}\cyc{3,4}$.
Identity~\eqref{eq:eqb7} also follows
from~\eqref{eq:rho4321},~\eqref{eq:rho4321b} and the following identity
    \begin{align} \label{eq:rho4321c}
    \begin{split}
    \Alt_8\Big[\rho_{4,3}\otimes\rho_{3,2}
    \otimes\rho_2\otimes\rho_2\Big]
    = \Alt_8\Big[&\detv{1234}\otimes \detv{2345}
    \otimes \detv{3456} \otimes \detv{4567} \\
    +&\detv{1234}\otimes \detv{2345}
    \otimes \detv{4567} \otimes \detv{3456}\Big]\,,
    \end{split}
    \end{align}
that is easily proved by expanding in $\detv{ijkl}$. Finally,
for equation~\eqref{eq:eqb4} we use cyclic shift in the second
term to rewrite the first two terms of the LHS as
    \begin{align*}
    &\Lic_2\Big(\frac{\rho_{3}}{\rho_{2}}\Big)
    \otimes\Big[\rho_{1,2}\odot\rho_{3,4}\Big]
    -2\Lic_{2}\Big(\frac{\rho_{1}}{\rho_{2}}\Big)
    \otimes\Big[\rho_{3,2}\odot\rho_1
    \Big] \modalt \Lic_2\Big(\frac{\rho_{3}}{\rho_{2}}\Big)
    \otimes\Big[
    \frac{\rho_{1,2}}{\rho_2^2}\odot\rho_{3,4}
    \Big] \\
    \modalt &\Lic_2\Big(\frac{\rho_{3}}{\rho_{2}}\Big)
    \otimes\Big[\frac{\rho_{1,2}}{\rho_2}\odot\frac{\rho_{3,4}}{\rho_3}
    -\cancel{\rho_2\odot\rho_3}\Big]
    \modalt \Lic_2\Big(\frac{\rho_{3}}{\rho_{2}}\Big)
    \otimes\Big[\frac{\rho_{1,2}}{\rho_1}\odot\frac{\rho_{3,4}}{\rho_4}\Big]
     \,,
    \end{align*}
where we used the permutation~$\cyc{1,6}\cyc{2,5}\cyc{3,4}$
and~\eqref{eq:r32r34} in the second equality, and~$\cyc{7,8}$ in the
third. Using the five-term relation~\eqref{eq:fivetermr24r34} we rewrite
    \begin{equation*}
    \Lic_{2}\Big(\frac{\rho_{2,4}}{\rho_{3,4}}\Big)
    \otimes\Big[\rho_{1}\odot \rho_4\Big]
    \modalt
    -\frac{1}{2}\Lic_{2}\Big(\frac{\rho_{3}}{\rho_{2}}\Big)
    \otimes\Big[\rho_{1}\odot \rho_4\Big]\,,
    \end{equation*}
and thus, in view of~\eqref{eq:rho4321}, equation~\eqref{eq:eqb4}
follows from the following identity
    \begin{equation}\label{eq:rho4321d}
    \Lic_2\Big(\frac{\rho_{3}}{\rho_{2}}\Big)
    \otimes\Big[\frac{\rho_{1,2}}{\rho_1}\otimes\frac{\rho_{3,4}}{\rho_4}
    -\frac{1}{2}\rho_1\otimes\rho_4\Big]
    \modalt 28\,(\detv{3456}\wedge \detv{2345})\otimes
    \detv{1234} \otimes \detv{4567}\,,
    \end{equation}
that we verify by direct expansion with the following simplifications.
Since $\frac{\rho_3}{\rho_2}=\CR(34|2578)^{-1}$,
we can expand $\Lic_2(\frac{\rho_3}{\rho_2})$ as
    \[\Lic_2\Big(\frac{\rho_{3}}{\rho_{2}}\Big) =
    \frac{1}{2}\Alt_{\mathfrak{S}_{\{2,5,7,8\}}}\detv{3425}\wedge
    \detv{3427}\]
Since the above expression is fixed by $\cyc{3,4}$ and $\cyc{1,6}$,
we expand $[\frac{\rho_{1,2}}{\rho_1}\otimes\frac{\rho_{3,4}}{\rho_4}-\frac{1}{2}\rho_1\otimes\rho_4]$ as
    \begin{align*}
    &\detv{2348}\otimes\detv{4567}
    -\detv{1237}\otimes\detv{3456}
    +\detv{1237}\otimes\detv{3458}
    -\detv{1237}\otimes\detv{4578}\\
    +&\detv{1234}\otimes\detv{4578}
    -\detv{1234}\otimes\detv{4567}
    -\detv{2378}\otimes\detv{4567}
    +\detv{2378}\otimes\detv{3456} \\
    +\tfrac{1}{2}&(\detv{1237}\otimes\detv{4567}
      -\detv{1238}\otimes\detv{4568})
    +\tfrac{1}{2}\detv{1237}\otimes\detv{4568}
    +\tfrac{1}{2}\detv{1238}\otimes\detv{4567}\,.
    \end{align*}
Here we did not include the terms that are invariant under $\cyc{3,4}$
or $\cyc{1,6}$. Moreover, the three terms on the last line
vanish under skew-symmetrization after multiplying
by~$\Lic_{2}(\frac{\rho_3}{\rho_2})$. After this we simply expand
the remaining expression and collect the terms modulo $\Alt_8$. This
proves~\eqref{eq:rho4321d} and thus~\eqref{eq:eqb4}
and~\eqref{eq:symid1}.

Next, we prove~\eqref{eq:symid2}, the second claim of the lemma.
By applying the permutation $\cyc{1,6}\cyc{2,5}\cyc{3,4}$ and cyclic
shifts to the last two terms in~\eqref{eq:symid2} we see
that~\eqref{eq:symid2} equals (mod~$\Alt_8$)
    \begin{equation} \label{eq:symid2a}
    \Lic_{2}\Big(\frac{\rho_{2,4}}{\rho_{3,4}}\Big)
    \otimes\Big[
    -\frac{\rho_{2}}{\rho_{2,1}}\odot \frac{\rho_{1}}{\rho_{1,2}}
    \Big]
    -\Lic_{2}\Big(\frac{\rho_4}{\rho_3}\Big)
    \otimes\Big[\frac{1}{2}\rho_{1,2}\odot \rho_{1,2}\Big]
    -\Lic_{2}\Big(\frac{\rho_3}{\rho_2}\Big)
    \otimes\Big[\frac{1}{2}\rho_{1,2}\odot \rho_{1,2}
    \Big]\,.
    \end{equation}
Using the five-term relation~\eqref{eq:fivetermr24r34} and
noting that $f(\rho_1,\rho_2,\rho_4)\modalt 0$ and that
$\Lic_2(\frac{\rho_3\rho_{2,4}}{\rho_2\rho_{3,4}})$
is equivalent to $\Lic_2(\frac{\rho_{2,4}}{\rho_{3,4}})$
under $\cyc{7,8}$ we get that~\eqref{eq:symid2a} is equal to
    \begin{equation*}
    \Lic_{2}\Big(\frac{\rho_{2,4}}{\rho_{3,4}}\Big)
    \otimes\Big[
    -\rho_{2}\odot \frac{\rho_{1}}{\rho_{1,2}}
    +\rho_{2,1}\odot \rho_{1}
    -\frac{1}{2}\rho_{1,2}\odot \rho_{1,2}
    +\frac{1}{2}\frac{\rho_{1,2}}{\rho_1\rho_2}\odot
    \frac{\rho_{1,2}}{\rho_1\rho_2}
    \Big] =
    \Lic_{2}\Big(\frac{\rho_{2,4}}{\rho_{3,4}}\Big)
    \otimes\Big[\rho_{1}\otimes\rho_{1}+\rho_{2}\otimes\rho_{2}\Big]\,.
    \end{equation*}
Again using~\eqref{eq:r32r34} we see that it is enough to check that
    \[
    (\rho_{4,3}\wedge\rho_{3,2})\otimes\Big[
    \rho_{1}\otimes\rho_{1}
    +\rho_{2}\otimes\rho_{2}\Big]
    \modalt 0\,.
    \]
This, in turn, follows from
    \[(\rho_{4,3}\otimes\rho_{3,2}-\rho_{3,2}\otimes\rho_{2,1})
    \otimes \rho_{1}\otimes\rho_{1} \modalt 0\,, \]
which we now check by doing expansion. In what follows we
cancel terms by transpositions:
    \begin{align*}
    \Big[\frac{\detv{3456}\detv{4578}}{\detv{3458}\detv{4568}}
    \otimes\frac{\detv{2345}\cancel{\detv{3478}}}
    {\detv{2348}\cancel{\detv{3458}}}
    -      \frac{\cancel{\detv{2345}}\detv{3478}}
    {\cancel{\detv{2348}}\detv{3458}}
    \otimes\frac{\detv{1234}\detv{2378}}{\detv{1238}\detv{2348}}
    \Big]
    \otimes\Big(\frac{\detv{1237}}{\detv{1238}}\Big)^{\otimes 2}
    \end{align*}
by $\cyc{1,2}$ and $\cyc{2,3}$, then
    \begin{align*}
    \Big[\frac{\detv{3456}\cancel{\detv{4578}}}
    {\detv{3458}\cancel{\detv{4568}}}
    \otimes\frac{\detv{2345}}{\detv{2348}}
    -\frac{\detv{3478}}{\detv{3458}}
    \otimes\frac{\cancel{\detv{1234}}\detv{2378}}
    {\cancel{\detv{1238}}\detv{2348}}
    \Big]
    \otimes\Big(\frac{\detv{1237}}{\detv{1238}}\Big)^{\otimes 2}
    \end{align*}
by $\cyc{2,3}$ and $\cyc{1,2}$, then
    \begin{align*}
    \Big[\frac{\detv{3456}}{\detv{3458}}
    \otimes\frac{\cancel{\detv{2345}}}{\detv{2348}}
    -\frac{\cancel{\detv{3478}}}{\detv{3458}}
    \otimes\frac{\detv{2378}}{\detv{2348}}
    \Big]
    \otimes\Big(\frac{\detv{1237}}{\detv{1238}}\Big)^{\otimes 2}
    \end{align*}
by $\cyc{4,5}$ and $\cyc{5,6}$, and finally
(note that the two terms $\detv{3458}\otimes\detv{2348}$ cancel out)
    \begin{align*}
    \Big[-\cancel{\detv{3456}}\otimes\detv{2348}
    +\cancel{\detv{3458}}\otimes\detv{2378}\Big]
    \otimes\Big(\frac{\detv{1237}}{\detv{1238}}\Big)^{\otimes 2}
    \end{align*}
by $\cyc{5,6}$ and $\cyc{4,5}$.
\end{proof}
Combining~\eqref{eq:symid1} and~\eqref{eq:symid2} gives~\eqref{eq:symid0},
and hence concludes the proof of Theorem~\ref{thm:gr4toi31_2}.

\medskip
\section{Proof of Theorem~\protect\ref{thm:gr4toli4}}
\label{sec:pf_gr4toli4}
We will use the identity from Theorem~\ref{thm:gr4toi31_2}:
\begin{equation*}
\begin{split}
\frac{7}{144}\,\GR_4 \;\modsh\; \Alt_8 \Big[
\II_{3,1}\Big(\frac{\rho_{1,2}\rho_{3,4}}
{\rho_{3,2}\rho_{1,4}},
\frac{\rho_1}{\rho_{1,4}}\Big)
+2&\II_{3,1}\Big(\frac{\rho_{1,2}}
{\rho_1},
\frac{\rho_{3,2}}{\rho_{3,4}}\Big)
+6\Lic_{4}\Big(\frac{\rho_1\rho_{3,2}}
{\rho_{1,2}\rho_{3,4}}\Big)\Big]\,.
\end{split}
\end{equation*}
Observe that the $\Lic_4$ term in the above expression corresponds
exactly to the single $\Lic_4$ term in~\eqref{eq:gr4i31_2}, and
also that the two $\symsix$ terms correspond to passing
from $\II_{3,1}$ to $\wI_{3,1}$ using Proposition~\ref{prop:i31tilde}.
Thus to prove the theorem it is enough
to establish the following two lemmas.
\begin{lemma}\label{lem:gr4li4_1}
    We have
    \begin{align*}
    &\Alt_{8}\Big[\wI_{3,1}\Big(\frac{\rho_{1,2}\rho_{3,4}}
    {\rho_{3,2}\rho_{1,4}},
    \frac{\rho_1}{\rho_{1,4}}\Big)\Big]\\
    = &\Alt_{8}\fiveterm\big(\tfrac{\rho_4}{\rho_1};\,
    -[\tfrac{\rho _{4,2}}{\rho_{4,1}};\tfrac{\rho_{4,1}}{\rho_{4,3}}]
    +             [34|2685;48|7653]
    -\tfrac{1}{4} [43|1256;43|1268]
    +\tfrac{1}{12}[43|1256;42|1365]\big)\,.
    \end{align*}
\end{lemma}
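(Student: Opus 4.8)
The plan is to massage $\wI_{3,1}$ so that its first slot becomes the monomial $\rho_4/\rho_1$, and then to run, in the second slot, the same five-term-plus-anharmonic bookkeeping used for the symmetric part in Section~\ref{sec:pf_i31}, recording the $\fiveterm$-corrections produced along the way. First I would use the symmetries of $\wI_{3,1}$ from Proposition~\ref{prop:i31tilde}. Since $\tfrac{\rho_4}{\rho_1}=\tfrac{\beta-1}{\beta}=1-\tfrac1\beta$ for $\beta=\tfrac{\rho_1}{\rho_{1,4}}$, and $x\mapsto\tfrac{x-1}{x}$ is an even element of the anharmonic group, the anharmonic symmetry in the second slot followed by antisymmetry gives
\[
 \wI_{3,1}\Big(\tfrac{\rho_{1,2}\rho_{3,4}}{\rho_{3,2}\rho_{1,4}},\,\tfrac{\rho_1}{\rho_{1,4}}\Big)
 \;\modsh\;\wI_{3,1}\Big(\tfrac{\rho_{1,2}\rho_{3,4}}{\rho_{3,2}\rho_{1,4}},\,\tfrac{\rho_4}{\rho_1}\Big)
 \;\modsh\;-\,\wI_{3,1}\Big(\tfrac{\rho_4}{\rho_1},\,\tfrac{\rho_{1,2}\rho_{3,4}}{\rho_{3,2}\rho_{1,4}}\Big).
\]
Writing $\Phi(x,y)=[x]+[y]+[\tfrac{1-x}{1-xy}]+[1-xy]+[\tfrac{1-y}{1-xy}]$ for the five-term combination, so that $\fiveterm(z;x,y)=\wI_{3,1}(z,\Phi(x,y))$, and using $\wI_{3,1}(z,w^\pi)\modsh\sgn(\pi)\wI_{3,1}(z,w)$ to absorb anharmonic twists of individual entries of any $\Phi$, the Lemma reduces to the assertion that $\Alt_8\,\wI_{3,1}(\tfrac{\rho_4}{\rho_1},W)\modsh 0$, where $W$ is $[\alpha]$ (with $\alpha=\tfrac{\rho_{1,2}\rho_{3,4}}{\rho_{3,2}\rho_{1,4}}$) minus the $\ZZ[\tfrac{1}{12}]$-combination of the four $\Phi$'s dictated by the statement.

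Since, modulo products, $\wI_{3,1}(z,\cdot)$ is linear in its formal argument and annihilates anharmonic relations exactly, it suffices to rewrite $W$, using anharmonic and five-term relations, as a sum of terms each fixed by an $\mathfrak{S}_8$-transposition fixing $\rho_4/\rho_1$ (any term depending only on $\rho_1,\rho_2,\rho_4$ is killed by $\cyc{2,3}$, terms depending only on $\rho_1,\rho_3,\rho_4$ are killed by $\cyc{1,2}$, etc.). Concretely: apply~\eqref{eq:fiveterm1} to replace $[\alpha]$ by $[\tfrac{\rho_{2,3}}{\rho_{1,3}}]+[\tfrac{\rho_{2,4}}{\rho_{3,4}}]+[\tfrac{\rho_{1,4}}{\rho_{1,3}}]+[\tfrac{\rho_{1,4}}{\rho_{2,4}}]$ modulo a five-term relation, which one matches with $\pm\Phi(\tfrac{\rho_{4,2}}{\rho_{4,1}},\tfrac{\rho_{4,1}}{\rho_{4,3}})$ up to anharmonic twists (checking $xy=\tfrac{\rho_{4,2}}{\rho_{4,3}}$, $1-xy=\tfrac{\rho_{2,3}}{\rho_{4,3}}$, $\tfrac{1-y}{1-xy}=\tfrac{\rho_{1,3}}{\rho_{2,3}}$, $\ldots$); this yields the first $\Phi$-term of $W$. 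Of the residual brackets, $[\tfrac{\rho_{1,4}}{\rho_{2,4}}]$ and $[\tfrac{\rho_{1,4}}{\rho_{1,3}}]$ drop out (by $\cyc{2,3}$, resp.\ $\cyc{1,2}$), while the nonvanishing pieces $\wI_{3,1}(\tfrac{\rho_4}{\rho_1},\tfrac{\rho_{2,3}}{\rho_{1,3}})$ and $\wI_{3,1}(\tfrac{\rho_4}{\rho_1},\tfrac{\rho_{2,4}}{\rho_{3,4}})$ are handled by the $\cyc{7,8}$-splitting trick of~\eqref{eq:li2split}: $\cyc{7,8}$ inverts every $\rho_i$ and $\wI_{3,1}(z^{-1},w)\modsh-\wI_{3,1}(z,w)$, so $\Alt_8\wI_{3,1}(\tfrac{\rho_4}{\rho_1},\tfrac{\rho_{2,4}}{\rho_{3,4}})=\Alt_8\wI_{3,1}(\tfrac{\rho_4}{\rho_1},\tfrac{\rho_3\rho_{2,4}}{\rho_2\rho_{3,4}})$, and one replaces the argument by half the sum before applying~\eqref{eq:fivetermr24r34}. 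Iterating such splittings produces the fractional coefficients $\tfrac14$ and $\tfrac1{12}$, the further five-term relations consumed being exactly $\Phi(\CR(34|2685),\CR(48|7653))$, $\Phi(\CR(43|1256),\CR(43|1268))$ and $\Phi(\CR(43|1256),\CR(42|1365))$, while every leftover $\wI_{3,1}(\tfrac{\rho_4}{\rho_1},-)$ is killed by a suitable $\mathfrak{S}_8$-transposition (one of $\cyc{2,3}$, $\cyc{5,6}$, $\cyc{1,2}$, or $\cyc{1,6}\cyc{2,5}\cyc{3,4}$ composed with inversion), using~\eqref{eq:r32r34} and its analogues freely.

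The main obstacle is exactly this last stage: arranging the chain of anharmonic and five-term substitutions so that simultaneously every residual $\wI_{3,1}(\tfrac{\rho_4}{\rho_1},-)$ is visibly fixed by an $\mathfrak{S}_8$-transposition and the accumulated $\fiveterm$-corrections assemble into precisely the combination of the statement with coefficients $1,-1,\tfrac14,-\tfrac1{12}$; pinning down the two fractional coefficients and the exact cross-ratio arguments is the genuinely delicate point. This is a finite but lengthy computation with five-term and anharmonic relations in $\ZZ[F]$ for $F$ the function field of $\Conf{8}{4}$, parallel to (though longer than) the symmetric-part argument in Section~\ref{sec:pf_i31}; in practice one also confirms the final identity by computing $\mathcal{S}^\shuffle$ of both sides directly, which is what singles out the right combination in the first place.
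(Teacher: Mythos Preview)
Your overall strategy matches the paper's: swap the arguments via the anharmonic and antisymmetry relations to put $\rho_4/\rho_1$ (equivalently $\rho_1/\rho_4$) in the first slot, then peel off $[\alpha]$ with the five-term $\Phi(\tfrac{\rho_{4,2}}{\rho_{4,1}},\tfrac{\rho_{4,1}}{\rho_{4,3}})$, kill the two residual brackets depending only on $\{\rho_1,\rho_2,\rho_4\}$ or $\{\rho_1,\rho_3,\rho_4\}$ by transpositions, and combine the remaining two via the involution $\cyc{1,6}\cyc{2,5}\cyc{3,4}$ into $-2\,\wI_{3,1}(\tfrac{\rho_1}{\rho_4},\tfrac{\rho_{3,2}}{\rho_{3,4}})$. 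Up to here your sketch and the paper coincide.

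The gap is in the next reduction. The paper does \emph{not} use the $\cyc{7,8}$ splitting of~\eqref{eq:li2split} here; instead the key input is the factorization~\eqref{eq:gonchtocr}, $\tfrac{\rho_{3,4}}{\rho_{3,2}}=\tfrac{\CR(34|2685)}{\CR(48|7635)}$, which immediately places the argument as the ratio of two genuine projected cross-ratios $r_1,r_2$. One then takes the five-term $\Phi(r_1,r_2^{-1})$: two of its entries are $r_0=\tfrac{\rho_{3,4}}{\rho_{3,2}}$ and $\sigma_{(56)}(r_0)$ (which merge under $\Alt_8$ since $\cyc{5,6}$ fixes $\rho_1/\rho_4$), while the remaining entries $r_2,r_3$ are killed by $\cyc{1,2}$ and $\cyc{2,3}$. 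This yields $-2\,\wI_{3,1}(\tfrac{\rho_1}{\rho_4},\tfrac{\rho_{3,2}}{\rho_{3,4}})\modalt \fiveterm(\tfrac{\rho_4}{\rho_1};r_1,r_2^{-1})-\wI_{3,1}(\tfrac{\rho_4}{\rho_1},r_1)$, and it is this step that pins down the second $\fiveterm$-term $[34|2685;48|7653]$. Your $\cyc{7,8}$ suggestion and~\eqref{eq:fivetermr24r34} do not obviously produce these specific cross-ratios.

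Finally, the coefficients $\tfrac14$ and $\tfrac1{12}$ are not obtained by ``iterating splittings'' but by a small linear-algebra step: the paper writes down two further five-term relations with first argument $\tfrac{\rho_4}{\rho_1}$ (exploiting its invariance under $\mathfrak{S}_{\{1,2,3\}}\times\mathfrak{S}_{\{4,5,6\}}$ and its inversion under $\cyc{1,6}\cyc{2,5}\cyc{3,4}$), which collapse under $\Alt_8$ to $3\,[43|1256]$ and $[43|1256]+4\,[34|2685]$ respectively. Solving for $[34|2685]=r_1$ gives exactly $\tfrac14$ and $-\tfrac1{12}$. Your proposal gestures at this but does not carry it out; the factorization~\eqref{eq:gonchtocr} is the missing concrete ingredient that makes the whole chain land on the stated combination.
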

\begin{lemma} \label{lem:gr4li4_2}
    We have
    \begin{align*}
    &\Alt_{8}\Big[\wI_{3,1}\Big(\frac{\rho_{1,2}}{\rho_1},
    \frac{\rho_{3,2}}{\rho_{3,4}}\Big)
    +\wI_{3,1}(\CR(34| 2567), \CR(67| 1345))\Big] \\
    = &\Alt_{8}\fiveterm(\tfrac{\rho_2}{\rho_1};\,
    -\tfrac{1}{2} [34|2685;48|7653]
    +\tfrac{1}{2}[48|7235;48|7263]
    +\tfrac{1}{4}[46|5238;43|2568])\,.
    \end{align*}
\end{lemma}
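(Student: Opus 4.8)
The plan is to verify the asserted identity at the level of the mod-products symbol $\mathcal{S}^\shuffle$, working throughout modulo the kernel of $\Alt_8$ and in the style of Section~\ref{sec:pf_i31}: any tensor fixed up to sign by an odd permutation of $v_1,\dots,v_8$ is discarded, and the dihedral symmetries acting on $\rho_1,\dots,\rho_6$ together with vanishing lemmas of the type \eqref{eq:r32r34} will do the bulk of the simplification. The first move is to rewrite every $\wI_{3,1}$ occurring on either side so that its \emph{first} argument is $\tfrac{\rho_2}{\rho_1}$ (which is itself a projected cross-ratio, namely $\CR(23|4178)$). On the right this is free: by definition (see Theorem~\ref{thm:i31fiveterm}) the expression $\fiveterm(\tfrac{\rho_2}{\rho_1};\,\cdots)$ is a $\QQ$-linear combination of symbols $\wI_{3,1}\big(\tfrac{\rho_2}{\rho_1},w\big)$ whose second entries $w$ are projected cross-ratios grouped into five-term relation elements $[x]+[y]+[\tfrac{1-x}{1-xy}]+[1-xy]+[\tfrac{1-y}{1-xy}]$. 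On the left, for the first term one uses $\tfrac{\rho_{1,2}}{\rho_1}=1-\tfrac{\rho_2}{\rho_1}$, which lies in the anharmonic orbit of $\tfrac{\rho_2}{\rho_1}$ with sign $-1$, so by Proposition~\ref{prop:i31tilde} one has $\wI_{3,1}\big(\tfrac{\rho_{1,2}}{\rho_1},\tfrac{\rho_{3,2}}{\rho_{3,4}}\big)\modsh -\wI_{3,1}\big(\tfrac{\rho_2}{\rho_1},\tfrac{\rho_{3,2}}{\rho_{3,4}}\big)$; for the second term one first applies a permutation $\pi\in\mathfrak{S}_8$ (at the cost of $\sgn(\pi)$ modulo $\Alt_8$) and, if needed, the swap antisymmetry $\wI_{3,1}(u,v)\modsh-\wI_{3,1}(v,u)$, to carry $\CR(34|2567)$ into the anharmonic orbit of $\tfrac{\rho_2}{\rho_1}$, after which the $6$-fold symmetry of $\wI_{3,1}$ in its first slot finishes the normalisation. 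After this the claim takes the shape $\wI_{3,1}\big(\tfrac{\rho_2}{\rho_1},W\big)\modalt 0$ for an explicit formal $\ZZ$-combination $W$ of projected cross-ratios in the second slot.

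To prove $\wI_{3,1}\big(\tfrac{\rho_2}{\rho_1},W\big)\modalt 0$ I would expand $\mathcal{S}^\shuffle$ exactly as in the proof of Theorem~\ref{thm:gr4toi31_2}: peel off the coproduct via \eqref{eq:i31cobracketa} and then \eqref{eq:i21cobracketa}, writing $\mathcal{S}^\shuffle\II_{3,1}$ of each argument as a sum of four-fold tensors built from $\mathcal{S}^\shuffle\Lic_2$ of various rational functions tensored with $d\log$-factors, and organise the $36$-fold average defining $\wI_{3,1}$ by means of its anharmonic and swap symmetries (which come from Theorem~\ref{thm:i31syms}). Then collapse the result modulo $\Alt_8$: discard every tensor fixed by a transposition; use \eqref{eq:r32r34} and its permutes to kill all $\Lic_2(\cdot)$-symbols whose trailing tensor slots depend only on $\rho_1,\rho_2,\rho_4$; apply five-term relations among the surviving $\Lic_2$-symbols (the identity $\sum_i x_i\wedge(1-x_i)=0$ holds already for mod-products symbols, which is precisely why grouping the right-hand side into five-term elements makes $W$ small); and reduce the remaining pure determinant tensors using \eqref{eq:rho4321} together with its companions \eqref{eq:rho4321b} and \eqref{eq:rho4321c}. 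This is the same toolkit used throughout Section~\ref{sec:pf_i31}, and the argument runs in parallel with that of Lemma~\ref{lem:gr4li4_1}, which should be settled first: it establishes the analogous statement for the $\wI_{3,1}\big(\tfrac{\rho_{1,2}\rho_{3,4}}{\rho_{3,2}\rho_{1,4}},\tfrac{\rho_1}{\rho_{1,4}}\big)$-term, this time with first entry normalised to $\tfrac{\rho_4}{\rho_1}$ (via a swap followed by an anharmonic move, since $\tfrac{\rho_1}{\rho_{1,4}}$ is anharmonic to $\tfrac{\rho_4}{\rho_1}$), by an identical mechanism.

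The hard part is the bookkeeping rather than any conceptual step. One must pin down the correct permutations $\pi$ and accompanying signs normalising $\CR(34|2567)$ and $\CR(67|1345)$, and then verify that the precise coefficients $-\tfrac{1}{2},\tfrac{1}{2},\tfrac{1}{4}$ and the six cross-ratios $[34|2685],[48|7653],[48|7235],[48|7263],[46|5238],[43|2568]$ on the right are exactly the data for which $W$ collapses modulo $\Alt_8$; producing these amounts to reverse-engineering the cancellation. In practice this is a finite linear-algebra computation in the $\QQ$-vector space of four-fold tensors of irreducible $\SL_4$-invariant polynomials on $\Conf{8}{4}$, taken modulo the $\mathfrak{S}_8$-action and modulo shuffle products; I would reduce both sides to a canonical form (lexicographically minimal $\mathfrak{S}_8$-representatives of each tensor, as in Section~\ref{sec:pf_i31}) and compare, using computer algebra to carry out the expansion of the $36\cdot 8!$-term objects while the structural reductions above keep the computation of manageable size and make the underlying mechanism transparent.
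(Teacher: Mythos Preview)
Your outline is workable in principle, but it diverges substantially from the paper's actual argument. The paper never expands $\mathcal{S}^\shuffle$ of any $\wI_{3,1}$ term here: the entire proof of Lemma~\ref{lem:gr4li4_2} is carried out at the level of \emph{arguments} of $\wI_{3,1}$, using only the $36$-fold symmetries and swap antisymmetry from Proposition~\ref{prop:i31tilde}, specific five-term relations in the second slot, and $\Alt_8$-equivalences coming from permutations of the $v_i$. Concretely, after the normalisation you describe, the paper uses the factorisation $\tfrac{\rho_{3,4}}{\rho_{3,2}}=\tfrac{\CR(34|2685)}{\CR(48|7635)}$ (which is~\eqref{eq:gonchtocr}) to feed $\wI_{3,1}(\tfrac{\rho_2}{\rho_1},\tfrac{\rho_{3,4}}{\rho_{3,2}})$ into a five-term relation generated by these two cross-ratios; three of the resulting five $\wI_{3,1}$ terms die under $\Alt_8$ because their second argument is fixed by a transposition leaving $\tfrac{\rho_2}{\rho_1}$ invariant, yielding~\eqref{eq:gr4li4_2a}. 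The extra $\wI_{3,1}(\CR(34|2567),\CR(67|1345))$ term is carried by a single permutation (and the swap/anharmonic symmetries) to $\wI_{3,1}(\tfrac{\rho_2}{\rho_1},[48|7235])$, and two further carefully chosen five-term relations---one with both generators on the line $48|{\cdot}$, the other reached via~\eqref{eq:gr4li4_2b}---polish off the remaining cross-ratio terms, with each unwanted summand killed either by a transposition fixing $\tfrac{\rho_2}{\rho_1}$ or by the nontrivial observation that $\wI_{3,1}([23|1487],[48|2356])$ is $\Alt_8$-equivalent to its own negative under an explicit even permutation combined with swap.

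What this buys is a proof that is short, hand-checkable, and explains the appearance of exactly those six cross-ratios and those coefficients: they are the generators of the specific five-term relations whose ``extra'' terms happen to vanish under $\Alt_8$. Your route---descending to four-fold tensors of $\detv{ijkl}$'s and verifying equality in that space---would certainly confirm the identity but obscures this structure, and the ``reverse-engineering the cancellation'' you flag as the hard part is precisely what the paper's algebraic approach circumvents. If you want to follow the paper's method, the key move you are missing is to work with five-term relations in the \emph{second} argument of $\wI_{3,1}$ while holding the first argument fixed at $\tfrac{\rho_2}{\rho_1}$, and to exploit systematically that $\tfrac{\rho_2}{\rho_1}=\CR(23|1487)$ is invariant under $\cyc{2,3}$, $\cyc{5,6}$, and (up to inversion) under $\cyc{1,4}\cyc{7,8}$: these are what make most five-term summands disappear.
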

\begin{proof}[Proof of Lemma~\ref{lem:gr4li4_1}]
    First, using a five-term relation equivalent to~\eqref{eq:fiveterm1} we get
    \begin{equation} \label{eq:gr4li4_1a}
    \begin{split}
    &\quad\quad\wI_{3,1}\Big(\frac{\rho_{1,2}\rho_{3,4}}
    {\rho_{3,2}\rho_{1,4}},
    \frac{\rho_1}{\rho_{1,4}}\Big)
    = \wI_{3,1}\Big(\frac{\rho_1}{\rho_4},
    \frac{\rho_{1,2}\rho_{3,4}}
    {\rho_{3,2}\rho_{1,4}}\Big)\\
    &=
    \fiveterm\Big(\frac{\rho_1}{\rho_4};\,
    \frac{\rho_{4,2}}{\rho_{4,1}},
    \frac{\rho_{4,1}}{\rho_{4,3}}\Big)
    - \wI_{3,1}\Big(\frac{\rho_1}{\rho_4},
    \frac{\rho_{3,2}}{\rho_{3,4}}\Big)
    - \wI_{3,1}\Big(\frac{\rho_1}{\rho_4},
    \frac{\rho_{3,1}}{\rho_{3,2}}\Big)
    - \wI_{3,1}\Big(\frac{\rho_1}{\rho_4},
    \frac{\rho_{4,2}}{\rho_{4,1}}\Big)
    - \wI_{3,1}\Big(\frac{\rho_1}{\rho_4},
    \frac{\rho_{4,1}}{\rho_{4,3}}\Big)\\
    &\modalt
    -\fiveterm\Big(\frac{\rho_4}{\rho_1};\,
    \frac{\rho_{4,2}}{\rho_{4,1}},
    \frac{\rho_{4,1}}{\rho_{4,3}}\Big)
    -2\wI_{3,1}\Big(\frac{\rho_1}{\rho_4},
    \frac{\rho_{3,2}}{\rho_{3,4}}\Big)\,.
    \end{split}
    \end{equation}
    Here the last two terms on the second line vanish under $\Alt_8$, since
    they are fixed by $\cyc{2,3}$ and $\cyc{1,2}$ respectively, and using
    the involution $\cyc{1,6}\cyc{2,5}\cyc{3,4}$ and the $6$-fold
    symmetry of $\wI_{3,1}$ we see that the term
    $\wI_{3,1}(\frac{\rho_1}{\rho_4},\frac{\rho_{3,1}}{\rho_{3,2}})$
    is $\Alt_8$-equivalent to
    $\wI_{3,1}(\frac{\rho_1}{\rho_4},\frac{\rho_{3,2}}{\rho_{3,4}})$.
    Note that
    \begin{equation} \label{eq:gonchtocr}
    \frac{\rho_{3,4}}{\rho_{3,2}} =
    \frac{\CR(34|2685)}{\CR(48|7635)}\,.
    \end{equation}
    Thus, if we denote $r_1=\CR(34|2685)$, $r_2=\CR(48|7635)$, then
    \begin{equation}\label{eq:lemtetra2a}
    \begin{split}
    \fiveterm\Big(\frac{\rho_4}{\rho_1};\,r_1,r_2^{-1}\Big)
    &=
    \wI_{3,1}\Big(\frac{\rho_1}{\rho_4},-[r_1]+[r_2]-[(1-\sigma_{(56)}(r_0))^{-1}]-[1-r_0]
    -[(1-r_3)^{-1}]\Big)\\
    &=
    \wI_{3,1}\Big(\frac{\rho_1}{\rho_4},
    [r_0]-[\sigma_{(56)}(r_0)]-[r_1]+[r_2]-[r_3]\Big)\,,
    \end{split}
    \end{equation}
    where we denote $r_0 = \frac{\rho_{3,4}}{\rho_{3,2}}$,
    $r_3=\frac{\detv{2346}\detv{4578}}{\detv{2345}\detv{4678}}$, and as
    before~$\sigma_{\pi}$ denotes the action of $\pi\in\mathfrak{S}_8$.
    Since $\sigma_{(23)}(r_3)=r_3$ and $\sigma_{(12)}(r_2)=r_2$
    (and these two involutions fix $\rho_1/\rho_4$)
    the last two terms in~\eqref{eq:lemtetra2a}
    cancel out after skew-symmetrization, and hence we obtain
    \begin{equation}\label{eq:gr4li4_1b}
    -2\,\wI_{3,1}\Big(\frac{\rho_1}{\rho_4},\frac{\rho_{3,2}}{\rho_{3,4}}\Big)
    \modalt \fiveterm\Big(\frac{\rho_4}{\rho_1};\,r_1,r_2^{-1}\Big)
    -\wI_{3,1}\Big(\frac{\rho_4}{\rho_1},r_1\Big)\,.
    \end{equation}
    For $\wI_{3,1}(\frac{\rho_4}{\rho_1},\CR(34| 2685))$ we use
    the following five-term identities:
    \begin{align*}
    \fiveterm\big(\tfrac{\rho_4}{\rho_1};
    \,&\CR(43| 1256),\CR(42| 1365)\big)\\
    &= \wI_{3,1}\big(\tfrac{\rho_4}{\rho_1},
    [43|1256] + [42|1365] + [45|1326] + [41|3526] + [46|1235]\big)\\
    &\modalt \wI_{3,1}\big(\tfrac{\rho_4}{\rho_1},
    3[43|1256]\big)\,,\\
    \fiveterm\big(\tfrac{\rho_4}{\rho_1};
    \,&\CR(43| 1256),\CR(43| 1268)\big)\\
    &= \wI_{3,1}\big(\tfrac{\rho_4}{\rho_1},
    [43|1256] + [43|1268] + [43|1586] + [43|1528] + [43|2856]\big)\\
    &\modalt \wI_{3,1}\big(\tfrac{\rho_4}{\rho_1},
    [43|1256] + 2[43|1268] + 2[43|2685]\big)
    \modalt \wI_{3,1}\big(\tfrac{\rho_4}{\rho_1},
    [43|1256] + 4[34|2685]\big)\,,
    \end{align*}
    where we have used the fact that $\frac{\rho_4}{\rho_1}$ is fixed by
    arbitrary permutations of $\{1,2,3\}$ and $\{4,5,6\}$, and on the
    last line we have used the fact that it is mapped to its inverse
    under the involution $\cyc{1,6}\cyc{2,5}\cyc{3,4}$. From these we obtain
    \begin{equation*}
    \wI_{3,1}\big(\tfrac{\rho_4}{\rho_1},r_1\big)
    \modalt \fiveterm\big(\tfrac{\rho_4}{\rho_1};\,
    \tfrac{1}{4} [43|1256;43|1268]
    -\tfrac{1}{12}[43|1256;42|1365]
    \big)\,,
    \end{equation*}
    which together with~\eqref{eq:gr4li4_1a} and~\eqref{eq:gr4li4_1b}
    proves the claim.
\end{proof}

\begin{proof}[Proof of Lemma~\ref{lem:gr4li4_2}]
    We start by rewriting
    $\wI_{3,1}(\frac{\rho_{1,2}}{\rho_1},\frac{\rho_{3,2}}{\rho_{3,4}}) = \wI_{3,1}(\frac{\rho_{2}}{\rho_1},\frac{\rho_{3,4}}{\rho_{3,2}})$.
    Using~\eqref{eq:gonchtocr} and the same five-term
    relation as in~\eqref{eq:lemtetra2a} we get
    \begin{equation*}
    \begin{split}
    \fiveterm\Big(\frac{\rho_1}{\rho_2};\,r_1,r_2^{-1}\Big)
    =\wI_{3,1}\Big(\frac{\rho_2}{\rho_1},
    [r_0]-[\sigma_{(56)}(r_0)]-[r_1]+[r_2]-[r_3]\Big)\,,
    \end{split}
    \end{equation*}
    where as before $r_0 = \frac{\rho_{3,4}}{\rho_{3,2}}$,
    $r_1=\CR(34|2685)$, $r_2=\CR(48|7635)$, and
    $r_3=\frac{\detv{2346}\detv{4578}}{\detv{2345}\detv{4678}}$.
    Since $\sigma_{(23)}(r_3)=r_3$ and $\sigma_{(23)}(\rho_i)=\rho_i$ for
    $i=1,2$, the term with $r_3$ vanishes after skew-symmetrization, and we obtain
    \begin{equation} \label{eq:gr4li4_2a}
    \wI_{3,1}\Big(\frac{\rho_{2}}{\rho_{1}},
    \frac{\rho_{3,2}}{\rho_{3,4}}\Big)
    \modalt
    \frac{1}{2}\fiveterm\Big(\frac{\rho_1}{\rho_2};\,r_1,r_2^{-1}\Big)
    +\frac{1}{2}\wI_{3,1}\Big(\frac{\rho_{2}}{\rho_{1}},\CR(34|2685)\Big)
    -\frac{1}{2}\wI_{3,1}\Big(\frac{\rho_{2}}{\rho_{1}},\CR(48|7635)\Big)\,.
    \end{equation}
    Since $\frac{\rho_2}{\rho_1}=\CR(23| 1487)$, we can rewrite
    the remaining combination of $\wI_{3,1}$'s (modulo $\Alt_8$)
    together with the term~$\wI_{3,1}(\CR(34| 2567), \CR(67| 1345))$ as
    \begin{equation*}
    \wI_{3,1}\big(\tfrac{\rho_{2}}{\rho_{1}},
    \tfrac{1}{2}[34|2685]-\tfrac{1}{2}[48|7635]+[48|7235]\big)\,.
    \end{equation*}
    To express it in terms of $\fiveterm$ we will use two five-term relations.
    The first is
    \begin{align*}
    \fiveterm\big(\tfrac{\rho_2}{\rho_1};
    \,&\CR(48| 7235),\CR(48| 7263)\big)\\
    &= \wI_{3,1}\big(\tfrac{\rho_2}{\rho_1},
    [48|7235] + [48|7263] + [48|2563] + [48|2576] + [48|3567]\big)\\
    &\modalt \wI_{3,1}\big(\tfrac{\rho_2}{\rho_1},
    2[48|7235] - 2[48|7635] + [48|2356]\big)
    \modalt \wI_{3,1}\big(\tfrac{\rho_2}{\rho_1},
    2[48|7235] - 2[48|7635] \big)\,,
    \end{align*}
    where we have used the fact that the permutations~$\cyc{2,3}$
    and~$\cyc{5,6}$ both leave $\frac{\rho_2}{\rho_1}$ fixed, and the
    term $\wI_{3,1}([23|1487],[48|2356])$ vanishes after skew-symmetrization,
    since the even permutation $\cyc{1,5}\cyc{2,4}\cyc{3,8}\cyc{6,7}$
    maps it to $\wI_{3,1}([48|5236],[23|4817])$ that is equal
    to $-\wI_{3,1}([23|1487],[48|2356])$ by Proposition~\ref{prop:i31tilde}.
    For the second five-term relation we use the symmetry
    $\wI_{3,1}(x,y)=-\wI_{3,1}(y,x)$ together with $36$-fold symmetry
    for $\wI_{3,1}$ to rewrite
    \begin{equation} \label{eq:gr4li4_2b}
    \wI_{3,1}([23|1487],[48|7635]) =
    -\wI_{3,1}([48|7635],[23|1487]) \modalt
    \wI_{3,1}([23|1487],[46|5238])\,,
    \end{equation}
    where in the second equality we have used
    the cyclic permutation~$\cyc{1,5,7,8,3,4,2,6}$.
    Then
    \begin{align*}
    \fiveterm\big(\tfrac{\rho_2}{\rho_1};
    \,&\CR(46| 5238),\CR(43| 2568)\big)\\
    &= \wI_{3,1}\big(\tfrac{\rho_2}{\rho_1},
    [46|5238] + [43|2568] + [42|3586] + [48|5326] + [45|2638]\big)\\
    &\modalt \wI_{3,1}\big(\tfrac{\rho_2}{\rho_1},
    2[46|5238] + 2[34|2685] - [48|2356]\big)
    \modalt \wI_{3,1}\big(\tfrac{\rho_2}{\rho_1},
    2[46|5238] + 2[34|2685] \big)\,,
    \end{align*}
    where we have used again that~$\wI_{3,1}([23|1487],[48|2356])$
    vanishes under skew-symmetrization.
    Combining the two five-term relations together
    with~\eqref{eq:gr4li4_2b} we get
    \begin{equation*}
    \wI_{3,1}\big(\tfrac{\rho_{2}}{\rho_{1}},
    \tfrac{1}{2}[34|2685]-\tfrac{1}{2}[48|7635]+[48|7235]\big)
    \modalt \fiveterm\big(\tfrac{\rho_2}{\rho_1};
    \tfrac{1}{2}[48|7235;48|7263]
    +\tfrac{1}{4}[46|5238;43|2568])\,,
    \end{equation*}
    which together with~\eqref{eq:gr4li4_2a} proves the claim.
\end{proof}

\medskip
\section{Proof of Proposition~\protect\ref{prop:gr5cobdecomposition}}
\label{sec:proofpropgr5}

The decomposition of~\eqref{eq:gr5del} in Equation~\eqref{eq:propgr5}
into the \( \II_{4,1}^+ \) subsums is a direct rewriting of the \( \II_{4,1} \) expression.  It is clear that the last subsum is a trivial coboundary piece: every summand depends only on $9$ of the $10$ points.  The claim will follow from the following lemmas which express the first three subsums as combinations of \( \Lic_2 \) and \( \Lic_3 \) functional equations.

We will write
\[
V(x,y) = [x]+[y]+\Big[\frac{1-x}{1-xy}\Big]+[1-xy]
+\Big[\frac{1-y}{1-xy}\Big]
\]
for the dilogarithm five-term relation in its 5-cyclic form.

Without loss of generality, we can replace \( \II_{4,1} \) with \( \wI_{4,1} \), since they are equal up to explicit \( \Lic_5 \) terms.

\begin{lemma}\label{lem:gr5sum1}
    The combination
    \[
    \Alt_{10} \Big[ \II_{4,1}^+\Big(\frac{\rho _{2,3}}{\rho _{2,1}},\frac{\rho _{4,3}}{\rho_{4,5}}\Big)
    -\II_{4,1}^+\Big(\frac{\rho _{4,3}}{\rho _{4,5}},\frac{\rho _{2,3}}{\rho _{2,1}}\Big) \Big]
    \]
    vanishes identically.
\end{lemma}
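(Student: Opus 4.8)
The plan is to prove the lemma by a symmetry argument alone, with no symbol computation. Writing
\[
g \coloneqq \II_{4,1}^+\Big(\frac{\rho _{2,3}}{\rho _{2,1}},\frac{\rho _{4,3}}{\rho_{4,5}}\Big)
-\II_{4,1}^+\Big(\frac{\rho _{4,3}}{\rho _{4,5}},\frac{\rho _{2,3}}{\rho _{2,1}}\Big)\,,
\]
the expression $g$ is by construction anti-symmetric under swapping the two arguments of $\II_{4,1}^+$. So it will be enough to exhibit an \emph{even} permutation of the ten points that realizes this swap; then $\Alt_{10}[g]=0$ by the standard observation that $\Alt_{10}$ kills anything sent to its negative by an even permutation (the same principle used repeatedly above for odd permutations).

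Concretely, I would take the involution $\tau = \cyc{1,8}\cyc{2,7}\cyc{3,6}\cyc{4,5}$. It fixes the two reference points $9$ and $0$ (the $10$-th point), so it genuinely acts on the rational functions $\rho_1,\dots,\rho_5$, and being a product of four transpositions it has $\sgn(\tau)=+1$. I would then record the action of $\sigma_\tau$ on $\rho_i = \detv{i,i+1,i+2,i+3,9}/\detv{i,i+1,i+2,i+3,0}$: applying $\tau$ reverses each block of four consecutive columns, which is an even permutation of columns, so no sign is introduced (and any such sign would cancel between numerator and denominator anyway). This gives $\sigma_\tau(\rho_1)=\rho_5$, $\sigma_\tau(\rho_2)=\rho_4$, $\sigma_\tau(\rho_3)=\rho_3$, $\sigma_\tau(\rho_4)=\rho_2$, $\sigma_\tau(\rho_5)=\rho_1$, and hence
\[
\sigma_\tau\Big(\frac{\rho_{2,3}}{\rho_{2,1}}\Big)=\frac{\rho_{4,3}}{\rho_{4,5}}\,,
\qquad
\sigma_\tau\Big(\frac{\rho_{4,3}}{\rho_{4,5}}\Big)=\frac{\rho_{2,3}}{\rho_{2,1}}\,.
\]

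It follows that $\sigma_\tau$ interchanges the two summands of $g$, so $g^{\sigma_\tau}=-g$; combined with $\Alt_{10}(g^{\sigma_\tau})=\sgn(\tau)\,\Alt_{10}(g)$ and $\sgn(\tau)=+1$ this forces $\Alt_{10}[g]=-\Alt_{10}[g]=0$. (If one prefers, one may replace $\II_{4,1}^+$ by $\wI_{4,1}^+$ throughout, which changes nothing since the two differ by explicit $\Lic_5$ terms.) There is essentially no genuine obstacle here; the one point that needs care is simply checking that $\tau$ acts on the $\rho_i$ as claimed, and the fact that reversing four consecutive columns is an even permutation makes even that sign bookkeeping immediate.
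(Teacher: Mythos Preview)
Your proof is correct and is essentially identical to the paper's own argument: the paper uses the same even involution $\tau=\cyc{1,8}\cyc{2,7}\cyc{3,6}\cyc{4,5}$, records that it induces $\rho_i\mapsto\rho_{6-i}$, and observes that the difference can be written as $(1-\sigma_\tau)$ applied to a single term, which $\Alt_{10}$ annihilates since $\sgn(\tau)=+1$. Your phrasing via $g^{\sigma_\tau}=-g$ is just the other side of the same coin.
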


\begin{lemma}\label{lem:gr5sum2}
    We have
    \begin{alignat*}{2}
    & \mathrlap{
        \Alt_{10} \Big[ 2 \wI_{4,1}^+\Big(\frac{\rho _{1,2} \rho _{3,4}}{\rho _{1,4} \rho _{3,2}},\frac{\rho _{4,5}}{\rho_{4,1}}\Big) \Big]
    }
    \\ & = \Alt_{10} \Big[
    \wI_{4,1}^+\Big(
    && -2 V\Big(\frac{\rho_{2,1}}{\rho_{2,3}}, \frac{\rho_{3,4}}{\rho_{1,4}} \Big)
    - V([340|9625], [234 | 1650])
    + V([450|9736], [345 | 2760])
    \\ & && + \frac{1}{3} V([340|9625], [340|9657])
    + \frac{1}{3} V([345|2706], [347|2560]),
    \frac{\rho _{4,5}}{\rho_{4,1}}\Big)
    \Big] \,.
    \end{alignat*}
\end{lemma}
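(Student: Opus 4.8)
The plan is to prove the identity at the level of mod-products symbols, working throughout modulo the kernel of $\Alt_{10}$; I write $\modaltt$ for such congruences. By the remark opening this section we may replace $\II_{4,1}$ by $\wI_{4,1}$ everywhere, the difference being explicit $\Lic_5$ terms which are irrelevant here. Since $\wI_{4,1}^{+}$ is additive in the formal-sum structure of its first argument, both sides become finite $\mathbb Z$-linear combinations of terms $\wI_{4,1}^{+}([w],z)$ with the second slot fixed at $z=\rho_{4,5}/\rho_{4,1}$ and $w$ running over rational functions on $\Conf{10}{5}$. The lemma thus reduces to a $\mathbb Z$-linear identity, modulo $\Alt_{10}$, in the free abelian group on such $w$, in which we may freely use: the five-term relations (precisely what the blocks $V(\cdot,\cdot)$ encode); the anharmonic symmetry $\wI_{4,1}^{+}([w^{\sigma}],z)\modaltt\sgn(\sigma)\,\wI_{4,1}^{+}([w],z)$ of Proposition~\ref{prop:i41tilde}; and the standard vanishing of any term that is carried to itself by an odd permutation of the ten points (in particular when $w$ is fixed, up to the anharmonic action, by such a permutation, or when $w$ degenerates to $0$, $1$ or $\infty$).

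The first step is geometric normalisation. Using the description of $\rho_i$ as the coordinate of the point where the line through $P_9,P_{10}$ meets the hyperplane $\langle P_i,P_{i+1},P_{i+2},P_{i+3}\rangle$, I would identify every $\rho$-ratio below with a projected cross-ratio $\CR(abc\,|\,defg)$ by means of Plücker relations (essentially $\rho_{1,4}\rho_{3,2}-\rho_{1,2}\rho_{3,4}=\pm\,\rho_{1,3}\rho_{2,4}$ and its permutations), which also brings the arguments $(1-x)/(1-xy)$, $1-xy$, $(1-y)/(1-xy)$ of each $V$ into the bracket form used on the right. The key point is that with $x=\rho_{2,1}/\rho_{2,3}$ and $y=\rho_{3,4}/\rho_{1,4}$ one has the exact equality $xy=\rho_{1,2}\rho_{3,4}/(\rho_{1,4}\rho_{3,2})$, so that $[1-xy]$ is anharmonic-equivalent, with sign $-1$, to the argument on the left of the lemma. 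Applying $\wI_{4,1}^{+}(\cdot,z)$ linearly,
\[
\wI_{4,1}^{+}\!\big(-2\,V(x,y),\,z\big)\;\modaltt\;2\,\wI_{4,1}^{+}\!\Big(\tfrac{\rho_{1,2}\rho_{3,4}}{\rho_{1,4}\rho_{3,2}},\,z\Big)\;-\;2\sum_{w}\wI_{4,1}^{+}([w],z)\,,
\]
the sum over $w\in\{x,\,y,\,\tfrac{1-x}{1-xy},\,\tfrac{1-y}{1-xy}\}$. So the block $-2V(\rho_{2,1}/\rho_{2,3},\rho_{3,4}/\rho_{1,4})$ already reproduces the left-hand side, and it remains to show that these four leftover terms are cancelled, modulo $\Alt_{10}$, by the other four blocks $-V([340|9625],[234|1650])$, $V([450|9736],[345|2760])$, $\tfrac13V([340|9625],[340|9657])$, $\tfrac13V([345|2706],[347|2560])$.

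That last step I would run exactly as the weight-$4$ Lemmas~\ref{lem:gr4li4_1}--\ref{lem:gr4li4_2}: rewrite each of the four leftover arguments as (something anharmonic to) a projected cross-ratio, feed it into a further five-term relation, and sort the five resulting terms into those $\Alt_{10}$-equivalent to an already present term, those killed by an odd permutation fixing them modulo the anharmonic group, and those that have become degenerate; iterating closes the recursion and pins down the four blocks. The rational coefficients $-1,+1,+\tfrac13,+\tfrac13$ are forced at the stages where a cross-ratio $\CR(abc\,|\,defg)$ with a non-trivial stabiliser in $\mathfrak{S}_{10}$ (permuting a triple such as $\{1,2,3\}$ or $\{4,5,6\}$) introduces a symmetrisation factor, just as in the weight-$4$ argument.

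The main obstacle is the bookkeeping itself: there are a couple of dozen bracket arguments spread over the five $V$-blocks once the identity is written out, and for each intermediate term produced in the recursion one must exhibit the precise odd permutation (or anharmonic collapse, or degeneration) that removes it while tracking the coefficient it contributes. As with the analogous weight-$4$ and weight-$5$ identities elsewhere in the paper, this is most safely carried out with computer assistance: one expands every block into tensors, normalises each argument to a canonical $\CR(abc\,|\,defg)$ via Plücker relations, reduces modulo the anharmonic group and the $\mathfrak{S}_{10}$-action, and checks that the two sides agree term by term.
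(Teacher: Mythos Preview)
Your outline is the paper's approach: the block $-2V(\rho_{2,1}/\rho_{2,3},\rho_{3,4}/\rho_{1,4})$ reproduces the left-hand side (via $[1-xy]$ and the anharmonic sign) plus residual terms, and the remaining four $V$-blocks absorb those residuals through anharmonic symmetries and transposition-invariance vanishing. Two refinements are worth noting.

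First, of the four leftover terms you list, two die immediately under $\Alt_{10}$: $[y]=[\rho_{3,4}/\rho_{1,4}]$ and $[(1-x)/(1-xy)]=[\rho_{1,4}/\rho_{2,4}]$ (together with the second slot $\rho_{4,5}/\rho_{4,1}$) are invariant under the transpositions $\cyc{1,2}$ and $\cyc{2,3}$ respectively. So only two residuals survive, $[\rho_{2,1}/\rho_{2,3}]$ and (after an anharmonic flip) $[\rho_{3,2}/\rho_{3,4}]$, and the four remaining $V$-blocks handle these in two stages rather than one block per leftover.

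Second, the paper does \emph{not} defer this lemma to computer assistance; it is done entirely by hand. The technical step you are missing is the explicit factorisation
\[
\frac{\rho_{2,1}}{\rho_{2,3}} \;=\; \frac{\CR(340\,|\,9625)}{\CR(234\,|\,1605)}
\]
(and its cyclic shift $\rho_{3,2}/\rho_{3,4}=\CR(450|9736)/\CR(345|2706)$), which is precisely what singles out the cross-ratios in the blocks $-V([340|9625],[234|1650])$ and $+V([450|9736],[345|2760])$. Applying those two five-term relations and killing the transposition-fixed terms leaves a single combination $\wI_{4,1}^{+}([340|9625]+[345|2706],z)$; the final two $\tfrac13 V$-blocks then each collapse (under permutations of $\{1,2,3\}$ and $\{5,6,7\}$ fixing $z$) to three copies of one of these plus a shared term $\pm[340|2567]$ that cancels between them---this is exactly the mechanism producing the $\tfrac13$ coefficients you anticipated.
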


We first note a useful \( \Lic_3 \) functional equation which will enter as part of the reduction.

\begin{lemma}\label{lem:gr5li3}
    Let
    \begin{align*}
    \mathcal{T} \coloneqq {} & 3 \left[\frac{\rho _{1,2} \rho _{3,4}}{\rho _{1,4} \rho _{3,2}}\right]
    -3 [\R_3(14|259,370)]
    -3 [\R_3(54|179,260)]
    \\&
    +[\R_3(14|257,369)]
    +[\R_3(14|259,376)]
    -[\R_3(94|150,276)]
    \\&
    -[\R_3(94|157,260)]
    -[\CR(154|2769)]
    +[\CR(194|2570)] \,.
    \end{align*}
    Then following is a \( \Lic_3 \) functional equation
    \begin{align*}
    \Alt_{\mathfrak{S}_{\{1,2,3\}} \times \mathfrak{S}_{\{5,6,7\}}} \Lic_3(\mathcal{T}) \modsh 0 \,.
    \end{align*}
\end{lemma}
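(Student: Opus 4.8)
The plan is to prove the identity at the level of the mod-products symbol. Since $\mathcal{S}^{\shuffle}\Lic_3(x)=(x\wedge(1-x))\otimes x$, writing $\mathcal{T}=\sum_j\nu_j[f_j]$ it suffices to establish
\[
\Alt_{\mathfrak{S}_{\{1,2,3\}}\times\mathfrak{S}_{\{5,6,7\}}}\;\sum_j\nu_j\,\bigl(f_j\wedge(1-f_j)\bigr)\otimes f_j \;=\;0
\]
modulo torsion. Every argument $f_j$ in $\mathcal{T}$ is a Laurent monomial in the Plücker coordinates $\detv{\bullet}$ on $\Conf{10}{5}$: for the projected cross-ratios $\CR(\cdot\mid\cdot)$ this is clear, for the projected triple ratios $\R_3(\cdot\mid\cdot,\cdot)$ it is the definition, and for $\tfrac{\rho_{1,2}\rho_{3,4}}{\rho_{1,4}\rho_{3,2}}$ it holds because every difference $\rho_i-\rho_j$ factors into a ratio of products of two Plücker coordinates by a three-term Plücker relation, exactly as used throughout the paper. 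Hence the third tensor slot and the first wedge entry expand $\ZZ$-linearly over Plücker coordinates. The one genuinely new input is the factorization of $1-f_j$: for $1-\CR(abc\mid defg)$ this is again the three-term Plücker relation (so it too splits into Plücker coordinates), whereas for $1-\R_3(ab\mid cde,fgh)$ the numerator is an irreducible polynomial that does not split --- these are precisely the non-classical contributions, and the coefficients $(-3,-3,1,1,-1,-1)$ attached to the six $\R_3$-terms of $\mathcal{T}$ are chosen so that these contributions cancel after skew-symmetrization, using the observation recorded above that $1-\R_3(abc,de\hskip -1pt f)$ and $1-\R_3(de\hskip -1pt f,abc)$ share an irreducible factor, together with the $\Lic_3$ inversion relation to reverse orderings of triples where needed.

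Equivalently, and this is how one discovers the combination $\mathcal{T}$, one assembles it from a handful of known weight-$3$ functional equations: (i) Goncharov's functional equation reducing the projected triple ratio $\R_3$ to $\Lic_3$ of cross-ratios modulo products (the classicality of the Grassmannian trilogarithm, \cite{Go1},~\cite{Go-ams}), applied to the specific instances of $\R_3$ occurring in $\mathcal{T}$; and (ii) the elementary $\Lic_3$ relations --- inversion $[x]+[x^{-1}]\modsh 0$, reflection $[x]+[1-x]+[1-x^{-1}]\modsh 0$, and the Kummer/three-term relation --- to match up the resulting cross-ratio arguments. Subtracting the right combination of instances of~(i) from $\mathcal{T}$ clears all the triple-ratio terms, leaving $\Lic_3$ of cross-ratios together with the two surviving $\CR$-terms and the leading term $3[\tfrac{\rho_{1,2}\rho_{3,4}}{\rho_{1,4}\rho_{3,2}}]$; one then shows this residue vanishes under $\Alt_{\mathfrak{S}_{\{1,2,3\}}\times\mathfrak{S}_{\{5,6,7\}}}$ using~(ii), exactly in the style of the weight-$4$ manipulations in Section~\ref{sec:pf_gr4toli4}. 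At this final stage many terms die at once because they are invariant under an odd permutation of $\{1,2,3\}$ or of $\{5,6,7\}$ (a group that fixes the remaining points $4,8,9,0$), and the rest cancel in pairs once reduced to a canonical representative modulo this group.

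The hard part is the bookkeeping linking the two steps: determining which specialisations of Goncharov's triple-ratio functional equation --- which pair $(ab)$ and which partition of the remaining labels into the two triples --- reproduce the six $\R_3$-terms of $\mathcal{T}$ with the displayed coefficients, and, in tandem, confirming that the non-Plücker factors coming from the triple ratios all cancel; and then verifying that the leftover $\Lic_3$-of-cross-ratio terms really do collapse onto the two $\CR$-terms and the leading term after the $36$-fold skew-symmetrization. Since everything in sight is a fixed rational function of the $\detv{\bullet}$, both are finite checks, and we carry them out on the Plücker-coordinate symbol as above, with the structured decomposition serving only as a guide.
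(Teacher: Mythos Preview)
Your proposal and the paper ultimately do the same thing: a direct verification at the level of the mod-products symbol, expanding everything over Pl\"ucker coordinates and checking that the $\Alt_{\mathfrak{S}_{\{1,2,3\}}\times\mathfrak{S}_{\{5,6,7\}}}$-skew-symmetrized combination vanishes. The paper offers no structured argument for this lemma at all --- it is simply asserted (and re-asserted in the proof of Lemma~\ref{lem:gr5sum3} as ``we check the functional equation $\Lic_3(\Lambda_1)\modsh 0$ holds''), so it is understood as a computer-assisted symbolic identity.

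Where you diverge is in the conceptual framing. You suggest that $\mathcal{T}$ is \emph{assembled} from instances of Goncharov's $22$-term/$840$-term reduction of the triple ratio plus elementary $\Lic_3$ relations, and that the coefficients $(-3,-3,1,1,-1,-1)$ on the six $\R_3$-terms are explained by cancellation of the shared irreducible factor of $1-\R_3(abc,de\hskip -1pt f)$ and $1-\R_3(de\hskip -1pt f,abc)$. The paper explicitly does \emph{not} take this route (see the sentence just before Lemma~\ref{lem:gr5sum3}: ``Currently, we do not reduce the third orbit directly to the $22$-term, or the $840$-term $\Lic_3$ functional equation''), and your write-up does not actually carry out this decomposition either --- the six $\R_3$-terms in $\mathcal{T}$ are not manifestly paired as $\R_3(\cdot,\cdot)$ with its swap, so your proposed cancellation mechanism remains a heuristic rather than an argument. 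If you strip away that speculative layer, what remains is exactly the paper's ``finite check at the symbol level'', which is fine but should be stated as such rather than dressed as a structural reduction you have not completed.
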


Currently, we do not reduce the third orbit directly to the 22-term, or the 840-term \( \Lic_3 \) functional equation.  Instead we invoke more general functional equations to simplify the reduction for the moment.

\begin{lemma}\label{lem:gr5sum3}
    The combination
    \begin{align*}
    \Alt_{10} \Big[ &  2 \wI_{4,1}^+\Big(\frac{\rho _{4,3}}{\rho _{4,5}},\frac{\rho _{2,3}}{\rho_{2,1}}\Big)
    +2 \wI_{4,1}^+\Big(\frac{\rho _{4,5}}{\rho _{4,1}},\frac{\rho _{1,2} \rho _{3,4}}{\rho	_{1,4} \rho _{3,2}}\Big)
    +\wI_{4,1}^+\Big(\frac{\rho _1}{\rho _{1,4}},\frac{\rho _{1,2} \rho _{3,4}}{\rho _{1,4}\rho _{3,2}}\Big)
    +2\wI_{4,1}^+\Big(\frac{\rho _{1,2}}{\rho _1},\frac{\rho _{3,2}}{\rho_{3,4}}\Big)
    \\&
    -\frac{4}{3} \wI_{4,1}^+\Big(\frac{\rho _{3,2}}{\rho _{3,4}},\frac{\rho_{1,2}}{\rho _1}\Big)
    + \frac{5}{3} \wI_{4,1}^+(\CR(346|1279),\R_3^-(12|345,678))  \Big]
    \end{align*}

    can be decomposed into a sum of the form
    \[
    \Alt_{10} \Big[ \sum_i \xi_i \wI_{4,1}^+(\Xi_i, y_i) + \sum_j \lambda_j \wI_{4,1}^+(x_j, \Lambda_j) \Big]
    \]
    where \( \Xi_i \) are \( \Lic_2 \) functional equations, and \( \Lambda_j \) are \( \Lic_3 \) functional equations.
\end{lemma}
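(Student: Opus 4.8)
The plan is to process the six summands of the combination a few at a time, replacing each single projected cross-ratio (respectively triple ratio) occurring as an argument by a genuine $\Lic_2$ (respectively $\Lic_3$) functional equation, exactly in the spirit of the weight-$4$ computations in Sections~\ref{sec:pf_i31} and~\ref{sec:pf_gr4toli4}. Throughout one works modulo $\Alt_{10}$ (writing $\modaltt$), so any term fixed by an odd permutation of the ten points may be dropped, and one uses repeatedly the anharmonic skew-symmetry $\wI_{4,1}^+(x^{\sigma},y)\modsh\sgn(\sigma)\wI_{4,1}^+(x,y)$ and the inversion invariance $\wI_{4,1}^+(x,y^{-1})\modsh\wI_{4,1}^+(x,y)$ of Proposition~\ref{prop:i41tilde}; these are what make the many error terms produced below vanish after one further alternation. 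As a preliminary step I would rewrite every $\rho$-ratio appearing as an argument as an honest projected cross-ratio in the coordinates of $\Conf{10}{5}$ --- the weight-$5$ analogue of the rewriting $\rho_{3,4}/\rho_{3,2}=\CR(34|2685)/\CR(48|7635)$ of~\eqref{eq:gonchtocr} --- and likewise unfold the anti-invariant combination $\R_3^-(12|345,678)=[\R_3(12|345,678)]-[\R_3(12|678,345)]$ into single triple ratios.

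For the summands whose second argument is a single cross-ratio (or the five-term-like argument $\rho_{1,2}\rho_{3,4}/(\rho_{1,4}\rho_{3,2})$) I would work in the first slot: apply a five-term relation $V(a,b)$ chosen so that one of its five terms is the desired first argument, and exploit the symmetry of the fixed second argument --- it is invariant under various permutations of the ten points --- to see that the other four terms collapse modulo $\Alt_{10}$ to a short combination. This is precisely the mechanism used in the weight-$4$ proof (e.g.\ the collapse of a five-term relation to $3[\cdots]$ modulo $\Alt_8$ in Lemma~\ref{lem:gr4li4_1}), and it trades the single first-slot argument for a $\Lic_2$ functional equation with the second argument left unchanged. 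Summing these contributions yields the part $\Alt_{10}\sum_i\xi_i\wI_{4,1}^+(\Xi_i,y_i)$ of the asserted decomposition, each $\Xi_i$ being built from five-term relations.

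The genuinely new ingredient is the term $\tfrac{5}{3}\wI_{4,1}^+(\CR(346|1279),\R_3^-(12|345,678))$: the anti-invariant triple ratio cannot be reached by five-term relations, so here one must work in the second slot. I would invoke Lemma~\ref{lem:gr5li3}, whose combination $\mathcal{T}$ is exactly a $\Lic_3$ functional equation tying together the main argument $\rho_{1,2}\rho_{3,4}/(\rho_{1,4}\rho_{3,2})$, six triple ratios $\R_3(\cdots)$, and two projected cross-ratios $\CR(\cdots)$. Feeding the $\Alt_{\mathfrak{S}_{\{1,2,3\}}\times\mathfrak{S}_{\{5,6,7\}}}$-symmetrised form of $\mathcal{T}$ into the second slot of $\wI_{4,1}^+(\CR(346|1279),-)$, and relabelling points to line up $\mathfrak{S}_{10}$-orbits, converts the $\R_3^-$ orbit (up to $\Alt_{10}$) into an orbit of cross-ratio arguments; combining this with the inversion symmetry of $\wI_{4,1}^+$ in $y$ and a few further, not necessarily minimal, weight-$3$ functional equations (the three-term relation, and possibly some of the more general relations alluded to above) one matches the surviving pieces against the output of the first-slot reductions. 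What is left is manifestly of the shape $\Alt_{10}\sum_j\lambda_j\wI_{4,1}^+(x_j,\Lambda_j)$ with each $\Lambda_j$ a $\Lic_3$ functional equation, any residue being annihilated by $\Alt_{10}$.

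The main obstacle is this last matching: pinning down the exact five-term and weight-$3$ functional equations for which every error term becomes $\Alt_{10}$-trivial, and then verifying all the cancellations. This is a large but finite bookkeeping over $\mathfrak{S}_{10}$-orbits of $\SL_5$-invariant symbols, which I expect to carry out on the level of symbols with computer assistance, as was done for~\eqref{eq:gr5i41} and Proposition~\ref{prop:gr5del}. A secondary, deliberate feature is that one does not try to push the $\R_3^-$ orbit all the way down to the $22$-term or $840$-term $\Lic_3$ relation here: allowing general weight-$3$ functional equations --- combinations with vanishing cobracket --- keeps the reduction tractable, at the price of obtaining only a decomposition into functional equations rather than an explicit $\Lic_5$ expression.
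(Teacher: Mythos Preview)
Your overall architecture is right, and it is the paper's: trade individual first-slot arguments for genuine five-term combinations, push second-slot arguments into honest $\Lic_3$ functional equations, and let $\Alt_{10}$ kill the error terms, with computer verification at the end. Two points diverge from what actually works.

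First, the role of Lemma~\ref{lem:gr5li3} is reversed. The combination $\mathcal{T}$ is not used to attack the $\R_3^-$ orbit and turn triple ratios into cross-ratios. Rather, after a first-slot five-term relation rewrites $[\rho_1/\rho_{1,4}]+2[\rho_{4,5}/\rho_{4,1}]$ as $[\rho_4/\rho_5]-[\rho_1/\rho_5]$ plus a five-term, one applies $\mathcal{T}$ in the \emph{second} slot of $\wI_{4,1}^+(\rho_1/\rho_5,-)$ and $\wI_{4,1}^+(\rho_4/\rho_5,-)$ to replace the argument $\rho_{1,2}\rho_{3,4}/(\rho_{1,4}\rho_{3,2})$ by triple ratios and cross-ratios. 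The large stabiliser of $\rho_1/\rho_5$ (namely $\mathfrak{S}_{\{1,2,3,4\}}\times\mathfrak{S}_{\{5,6,7,8\}}$) then forces several of the triple-ratio residues to themselves be $\Lic_3$ functional equations after further symmetrisation. The $\R_3^-$ orbit is not eliminated at this stage; it is absorbed into a residual (cross-ratio, triple-ratio) combination $\Omega'$ together with everything else.

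Second, and more substantially, the endgame is not only bookkeeping. After canonicalising all arguments one is left with a residual $\Omega'$ consisting of seven (cross-ratio, triple-ratio) orbits in which neither slot is a functional equation on its own. The paper's key step, found by computer search rather than symbol verification, is an auxiliary combination $\Psi$ of $53$ cross-ratio terms such that $\Omega'-\tfrac{1}{24}\wI_{4,1}^+(\Psi,\R_3(12|345,678))$ is a $\Lic_2$ functional equation in the first slot (under the automorphisms of the fixed triple ratio) while $\Omega'+\tfrac{1}{24}\wI_{4,1}^+(\Psi,\R_3(12|345,678))$ is a $\Lic_3$ functional equation in the second slot (after re-indexing so the first argument becomes the standard cross-ratio). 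Summing these two halves gives $2\Omega'$ in the required form. This splitting trick---searching for an auxiliary $\Psi$ so that the symmetric and antisymmetric parts land in the two target spaces---is the missing idea in your plan; framing the last step as ``verifying cancellations'' understates that one must first \emph{find} $\Psi$.
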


\begin{proof}[Proof of Lemma \ref{lem:gr5sum1}]
    The involution \( \cyc{1,8}\cyc{2,7}\cyc{3,6}\cyc{4,5} \) of signature \( +1 \), induces the map \( \rho_i \mapsto \rho_{6-i} \), \( i = 1, \ldots, 5 \).  Under this, the second summand maps exactly to the first, and the combination equals
    \begin{align*}
       & \Alt_{10} \Big[ \II_{4,1}^+\Big(\frac{\rho _{2,3}}{\rho _{2,1}},\frac{\rho _{4,3}}{\rho_{4,5}}\Big)
        -\II_{4,1}^+\Big(\frac{\rho _{4,3}}{\rho _{4,5}},\frac{\rho _{2,3}}{\rho _{2,1}}\Big) \Big] \\
    & =  \Alt_{10} \Big[ (1 - \sigma_{\cyc{1,8}\cyc{2,7}\cyc{3,6}\cyc{4,5}}) \II_{4,1}^+\Big(\frac{\rho _{2,3}}{\rho _{2,1}},\frac{\rho _{4,3}}{\rho_{4,5}}\Big)
     \Big]  \\
     & = 0 \,. \qedhere
    \end{align*}
\end{proof}

\begin{proof}[Proof of Lemma \ref{lem:gr5sum2}]
    Choosing \( x = \frac{\rho_{2,1}}{\rho_{2,3}} \), \( y = \frac{\rho_{3,4}}{\rho_{1,4}} \), we obtain
    \begin{align*}
    \wI_{4,1}^+\Big(V\Big(\frac{\rho_{2,1}}{\rho_{2,3}}, \frac{\rho_{3,4}}{\rho_{1,4}} \Big), \frac{\rho _{4,5}}{\rho_{4,1}} \Big) ={}
    &\wI_{4,1}^+\Big(\frac{\rho _{1,3} \rho _{2,4}}{\rho _{1,4} \rho _{2,3}},\frac{\rho _{4,5}}{\rho_{4,1}}\Big)
    +\wI_{4,1}^+\Big(\frac{\rho _{2,1}}{\rho _{2,3}},\frac{\rho _{4,5}}{\rho_{4,1}}\Big)
    +\wI_{4,1}^+\Big(\frac{\rho _{2,3}}{\rho _{2,4}},\frac{\rho _{4,5}}{\rho_{4,1}}\Big)
    \\
    &+\wI_{4,1}^+\Big(\frac{\rho _{1,4}}{\rho _{2,4}},\frac{\rho _{4,5}}{\rho_{4,1}}\Big)
    +\wI_{4,1}^+\Big(\frac{\rho _{3,4}}{\rho _{1,4}},\frac{\rho _{4,5}}{\rho_{4,1}}\Big)\,.
    \end{align*}

    Observe that, under the six-fold symmetry in the first argument, we have
    \[
    \wI_{4,1}^+\Big(\frac{\rho _{1,3} \rho _{2,4}}{\rho _{1,4} \rho _{2,3}},\frac{\rho _{4,5}}{\rho_{4,1}}\Big) = -\wI_{4,1}^+\Big(\frac{\rho _{1,2} \rho _{3,4}}{\rho _{1,4} \rho _{3,2}},\frac{\rho _{4,5}}{\rho_{4,1}}\Big) \,.
    \]
    Moreover, note that the two \( \wI_{4,1}^+ \) terms on the second line are invariant under the transposition \( \cyc{2,3} \) and \( \cyc{1,2} \), respectively.  Hence under \( \Alt_{10} \) they vanish identically.  Overall
    \begin{equation}\label{eqn:singlesub0}
    2\wI_{4,1}^+\Big(\frac{\rho _{1,2} \rho _{3,4}}{\rho _{1,4} \rho _{3,2}},\frac{\rho _{4,5}}{\rho_{4,1}}\Big)
    \modaltt
    -2\wI_{4,1}^+\Big(V\Big(\frac{\rho_{2,1}}{\rho_{2,3}}, \frac{\rho_{3,4}}{\rho_{1,4}} \Big), \frac{\rho _{4,5}}{\rho_{4,1}} \Big)
    +2\wI_{4,1}^+\Big(\frac{\rho _{2,1}}{\rho _{2,3}},\frac{\rho _{4,5}}{\rho_{4,1}}\Big)
    +2\wI_{4,1}^+\Big(\frac{\rho _{2,3}}{\rho_{2,4}},\frac{\rho_{4,5}}{\rho_{4,1}}\Big)
    \end{equation}

    Note that
    \[
    \frac{\rho_{2,1}}{\rho_{2,3}} = \frac{\CR(340 | 9625)}{\CR(342 | 1605)} \,.
    \]
    If we write \( r_1 = \CR(340|9625) \), \( r_2 = \CR(234 | 1605) \), then
    \begin{equation}\label{eqn:singlesub1}
    \begin{split}
    \wI_{4,1}^+\Big(V(r_1, r_2^{-1}), \frac{\rho _{4,5}}{\rho_{4,1}}\Big) &= \wI_{4,1}^+\Big([r_1] - [r_2] + [1-r_0] + [(1-\sigma_{\cyc{5,6}}r_0)^{-1}] + [(1 - r_3)^{-1}], \frac{\rho _{4,5}}{\rho_{4,1}}\Big) \\
    &= \wI_{4,1}^+\Big( - [r_0] + [\sigma_{\cyc{5,6}}r_0] + [r_1] - [r_2] + [r_3], \frac{\rho _{4,5}}{\rho_{4,1}}\Big)
    \end{split}
    \end{equation}
    where \( r_0 = \frac{\rho_{2,1}}{\rho_{2,3}} \) and \( r_3 = \frac{\detv{12345}\detv{34690}}{\detv{12346}\detv{34590}} \).  Notice that \( \sigma_{\cyc{2,3}} r_2 = r_2 \) and \( \sigma_{\cyc{1,2}} r_3 = r_3 \) and both of these involutions fix \( \frac{\rho_{4,5}}{\rho_{4,1}} \).  Hence the last two terms in \eqref{eqn:singlesub1} vanish after skew-symmetrization.  Since \( \cyc{5,6} \) also fixes the second argument, we obtain
    \[
    -2\wI_{4,1}^+\Big(r_0, \frac{\rho _{4,5}}{\rho_{4,1}}\Big) \modaltt \wI_{4,1}^+\Big(V(r_1, r_2^{-1}), \frac{\rho _{4,5}}{\rho_{4,1}}\Big) - \wI_{4,1}^+\Big(r_1,\frac{\rho _{4,5}}{\rho_{4,1}}\Big)
    \]

    Next, notice that up to six-fold symmetries
    \[
    \wI_{4,1}^+\Big(\frac{\rho _{2,3}}{\rho_{2,4}}, w\Big) = -\wI_{4,1}^+\Big( \frac{\rho_{3,2}}{\rho_{3,4}}, w\Big)
    \] and
    \[
    \frac{\rho_{3,2}}{\rho_{3,4}} = \sigma_{\cyc{1,2,3,4,5,6,7}} \frac{\rho_{2,1}}{\rho_{2,3}} \,.
    \]
    Unfortunately this permutation does not fix the second argument of \( \wI_{4,1}^+ \).  Nevertheless by applying it the generators of the five-term relation, we immediately obtain
    \[
    \wI_{4,1}^+\Big(V(r'_1, (r'_2)^{-1}), \frac{\rho _{4,5}}{\rho_{4,1}}\Big) = \wI_{4,1}^+\Big( - [r_0'] + [\sigma_{\cyc{6,7}}r_0'] + [r_1'] - [r_2'] + [r_3'], \frac{\rho _{4,5}}{\rho_{4,1}}\Big) \,,
    \]
    where
    \begin{alignat*}{2}
    r_0' &= \frac{\rho_{3,2}}{\rho_{3,4}} \,, & \quad r_1' &= \CR(450|9736) \,, \\
    r_2' &= \CR(345 | 2706) \,, & \quad r_3' &= \frac{\detv{23456}\detv{54790}}{\detv{23457}\detv{45690}} \,.
    \end{alignat*}
    This time \( \sigma_{\cyc{1,2}}r_1' = r_1' \), \( \sigma_{\cyc{2,3}} r_3' = r_3' \) and each of the involutions \( \cyc{1,2} \), \( \cyc{2,3} \) and \( \cyc{6,7} \) fixes \( \frac{\rho _{4,5}}{\rho_{4,1}} \).  Hence after skew-symmetrization we find
    \begin{equation}\label{eqn:singlesub2}
    -2\wI_{4,1}^+\Big(r_0', \frac{\rho _{4,5}}{\rho_{4,1}}\Big) \modaltt
    \wI_{4,1}^+\Big(V(r'_1, (r'_2)^{-1}), \frac{\rho _{4,5}}{\rho_{4,1}}\Big) +	\wI_{4,1}^+\Big(r_2', \frac{\rho _{4,5}}{\rho_{4,1}}\Big) \,.
    \end{equation}

    From \eqref{eqn:singlesub0}, \eqref{eqn:singlesub1} and \eqref{eqn:singlesub2} we obtain
    \begin{equation}\label{eqn:singlesubpart}
    \begin{split}
    & 2 \II_{4,1}^+\Big(\frac{\rho _{1,2} \rho _{3,4}}{\rho _{1,4} \rho _{3,2}},\frac{\rho _{4,5}}{\rho_{4,1}}\Big) \modaltt {} \\
    & \wI_{4,1}^+\Big([340 | 9625],\frac{\rho _{4,5}}{\rho_{4,1}}\Big)
    +\wI_{4,1}^+\Big([345 | 2706], \frac{\rho _{4,5}}{\rho_{4,1}}\Big)
    \\ &
    + \wI_{4,1}^+\Big(
    -2 V\Big(\frac{\rho_{2,1}}{\rho_{2,3}}, \frac{\rho_{3,4}}{\rho_{1,4}} \Big)
    - V([340|9625], [234 | 1650])
    + V([450|9736], [345 | 2760]),
    \frac{\rho _{4,5}}{\rho_{4,1}}\Big) \,.
    \end{split}
    \end{equation}
    To reduce \( \wI_{4,1}^+\big([340 | 9625] + [345 | 2706],\frac{\rho _{4,5}}{\rho_{4,1}}\big) \), consider the following five-term relations.  Firstly
    \begin{equation}\label{eqn:singlesubV1}
    \begin{split}
    &\wI_{4,1}^+\Big(V([340|9625], [340|9657]),  \frac{\rho _{4,5}}{\rho_{4,1}}\Big) \\
    & = \wI_{4,1}^+\Big([340|9625] + [340|9657] + [340|7652] + [340|9267] + [340|9275],  \frac{\rho _{4,5}}{\rho_{4,1}}\Big) \\
    &\modaltt \wI_{4,1}^+\Big( 3[340|9625] + [340|2567], \frac{\rho _{4,5}}{\rho_{4,1}}\Big)
    \end{split}
    \end{equation}
    using the six-fold anharmonic symmetry, and the invariance of the second argument under arbitrary permutations of \( \{1,2,3\} \) and \( \{5,6,7\} \).  The term containing cross-ratio \( [340|9657] \) vanishes because it is invariant under \( \cyc{1,2} \).  Then
    \begin{equation}\label{eqn:singlesubV3}
    \begin{split}
    &\wI_{4,1}^+\Big(V([345|2706], [347|2560]),  \frac{\rho _{4,5}}{\rho_{4,1}}\Big) \\
    & = \wI_{4,1}^+\Big([345|2706]+ [347|2560]+ [342|5076]+ [340|2576]+ [346|2750],  \frac{\rho _{4,5}}{\rho_{4,1}}\Big) \\
    &\modaltt \wI_{4,1}^+\Big( 3[345|2706] - [340|2567], \frac{\rho _{4,5}}{\rho_{4,1}}\Big)
    \end{split}
    \end{equation}
    again by the six-fold symmetry, and by the invariance of the second argument under permutations of \( \{5,6,7\} \).  The third term vanishes because it is invariant under \( \cyc{2,3} \).

    From \eqref{eqn:singlesubV1} and \eqref{eqn:singlesubV3} we conclude
    \begin{align*}
    & \wI_{4,1}^+\big([340 | 9625] + [345 | 2706],\frac{\rho _{4,5}}{\rho_{4,1}}\big)
    \\
    & {}  \modaltt \frac{1}{3} \wI_{4,1}^+\Big(V([340|9625], [340|9657]) + V([345|2706], [347|2560]),  \frac{\rho _{4,5}}{\rho_{4,1}}\Big) \,.
    \end{align*}
    Together with \eqref{eqn:singlesubpart} this establishes the claim.
\end{proof}

\begin{proof}[Proof of Lemma \ref{lem:gr5sum3}]
	Let \( \Omega \) denote the combination
	\begin{align*}
	\Omega = {} & 2 \wI_{4,1}^+\Big(\frac{\rho _{4,3}}{\rho _{4,5}},\frac{\rho _{2,3}}{\rho_{2,1}}\Big)
	+2 \wI_{4,1}^+\Big(\frac{\rho _{4,5}}{\rho _{4,1}},\frac{\rho _{1,2} \rho _{3,4}}{\rho	_{1,4} \rho _{3,2}}\Big)
	+\wI_{4,1}^+\Big(\frac{\rho _1}{\rho _{1,4}},\frac{\rho _{1,2} \rho _{3,4}}{\rho _{1,4}\rho _{3,2}}\Big)
	+2\wI_{4,1}^+\Big(\frac{\rho _{1,2}}{\rho _1},\frac{\rho _{3,2}}{\rho_{3,4}}\Big)
	\\&
	-\frac{4}{3} \wI_{4,1}^+\Big(\frac{\rho _{3,2}}{\rho _{3,4}},\frac{\rho_{1,2}}{\rho _1}\Big)
	+ \frac{5}{3} \wI_{4,1}^+(\CR(346|1279),\R_3^-(12|345,678))  \,.
	\end{align*}

	First note the following five-term relation (here 0 is not a vector index, but the number)
	\begin{align*}
	& \wI_{4,1}^+\Big(V\big( \CR(\rho_4\infty\rho_10), \CR(\rho_4\infty0\rho_5) \big), \frac{\rho _{1,2} \rho _{3,4}}{\rho	_{1,4} \rho _{3,2}} \Big) \\
	&= \wI_{4,1}^+\Big( [\CR(\rho_4\infty\rho_10)] + [\CR(\rho_4\infty0\rho_5)] + [\CR(\rho_5\infty0\rho_1)] + [\CR(\rho_1\rho_1\infty\rho_5)] + [\CR(\rho_4\rho_1\rho_50)], \frac{\rho _{1,2} \rho _{3,4}}{\rho	_{1,4} \rho _{3,2}}) \,.
	\end{align*}
	Up to six-fold symmetries this is equal to
	\begin{align*}
	& \wI_{4,1}^+\Big(
	\Big[\frac{\rho_1}{\rho_{1,4}}\Big]
	- \Big[\frac{\rho_4}{\rho_5}\Big]
	+ \Big[\frac{\rho_1}{\rho_5}\Big]
	+ \Big[\frac{\rho_{4,5}}{\rho_{4,1}}\Big]
	- \Big[\sigma_{\cyc{9,10}}\frac{\rho_{4,5}}{\rho_{4,1}}\Big], \frac{\rho _{1,2} \rho _{3,4}}{\rho	_{1,4} \rho _{3,2}} \Big)  \\
	& \modaltt
	\wI_{4,1}^+\Big(
	\Big[\frac{\rho_1}{\rho_{1,4}}\Big]
	+ 2\Big[\frac{\rho_{4,5}}{\rho_{4,1}}\Big]
	- \Big[\frac{\rho_4}{\rho_5}\Big]
	+ \Big[\frac{\rho_1}{\rho_5}\Big], \frac{\rho _{1,2} \rho _{3,4}}{\rho	_{1,4} \rho _{3,2}} \Big) \,,
	\end{align*}
	the equality after skew-symmetrization following since \( \frac{\rho _{1,2} \rho _{3,4}}{\rho_{1,4} \rho _{3,2}} \) is invariant under the transposition \( \cyc{9,10} \).

	We claim now that modulo \( \Alt_{10} \) the orbit \( \wI_{4,1}^+\Big( \frac{\rho_1}{\rho_5}, \frac{\rho _{1,2} \rho _{3,4}}{\rho	_{1,4} \rho _{3,2}} \Big) \) is a combination of \( \Lic_3 \) functional equations in the second argument.  Indeed, since the first argument is invariant under \( \mathfrak{S}_{\{1,2,3,4\}} \times \mathfrak{S}_{\{5,6,7,8\}} \), we get
	\begin{align*}
	\wI_{4,1}^+\Big( \frac{\rho_1}{\rho_5}, \frac{\rho _{1,2} \rho _{3,4}}{\rho	_{1,4} \rho _{3,2}}\Big)
	& = \frac{1}{3} \wI_{4,1}^+\Big(\frac{\rho_1}{\rho_5}, \mathcal{T}
	+3 [\R_3(14|259,370)]
	+3 [\R_3(54|179,260)]
	\\
	&
	\quad\quad\quad\quad\quad -[\R_3(14|257,369)]
	-[\R_3(14|259,376)]
	+[\R_3(94|150,276)]
	\\&
	\quad\quad\quad\quad\quad +[\R_3(94|157,260)]
	+[\CR(154|2769)]
	-[\CR(194|2570)] \Big) \\
	& \modaltt \frac{1}{3} \wI_{4,1}^+\Big(\frac{\rho_1}{\rho_5}, \mathcal{T}
	+3 [\R_3(54|179,260)]
	+[\R_3(94|150,276)]
	+ [\R_3(94|157,260)] \Big) \,,
	\end{align*}
	since the removed terms are also invariant under permutations of \( \{1,2,3,4\} \).  Notice now that
	\[
	\wI_{4,1}^+\Big(\frac{\rho_{1}}{\rho_{5}}, x\Big) \modaltt \frac{1}{4} \wI_{4,1}^+\Big(\frac{\rho_{1}}{\rho_{5}}, (1 + \sigma_{\cyc{9,0}})(1 - \sigma_{\cyc{1,5}\cyc{2,6}\cyc{3,7}\cyc{4,8}}) x \Big)
	\]
	using the 6-fold symmetry, since the indicated permutations fix or invert the first argument.

	We check the functional equation \( \Lic_3(\Lambda_i) \modsh 0 \) holds, for the following combinations
	\begin{align*}
	\Lambda_1 &= \Alt_{\mathfrak{S}_{\{1,2,3\}} \times \mathfrak{S}_{\{5,6,7\}}} \mathcal{T} \,, \\
	\Lambda_2 &= \Alt_{\mathfrak{S}_{\{1,2,3,4\}} \times \mathfrak{S}_{\{5,6,7,8\}}} (1 + \sigma_{\cyc{9,0}})(1 - \sigma_{\cyc{1,5}\cyc{2,6}\cyc{3,7}\cyc{4,8}}) [\R_3(54|179,260)] \,, \\
	\Lambda_3 &= \Alt_{\mathfrak{S}_{\{1,2,3,4\}} \times \mathfrak{S}_{\{5,6,7,8\}}} (1 + \sigma_{\cyc{9,0}})(1 - \sigma_{\cyc{1,5}\cyc{2,6}\cyc{3,7}\cyc{4,8}}) \big( [\R_3(94|150,276)]
	+ [\R_3(94|157,260)] \big) \,.
	\end{align*}
	Hence
	\[
	\Alt_{10}\Big[\wI_{4,1}^+\Big( \frac{\rho_1}{\rho_5}, \frac{\rho _{1,2} \rho _{3,4}}{\rho	_{1,4} \rho _{3,2}}\Big) \Big]
	= \Alt_{10} \Big[ \wI_{4,1}^+\Big(\frac{\rho_1}{\rho_5},
	\frac{1}{3 \cdot 3!^2} \Lambda_1 + \frac{1}{4 \cdot 4!^2} (3\Lambda_2 + \Lambda_3)
	\Big) \Big] \,,
	\]
	and so
	\begin{equation}\label{eqn:sum1comb1}
	\begin{split}
	& \Alt_{10} \Big[ \wI_{4,1}^+\Big(
	\Big[\frac{\rho_1}{\rho_{1,4}}\Big]	+ 2\Big[\frac{\rho_{4,5}}{\rho_{4,1}}\Big], \frac{\rho _{1,2} \rho _{3,4}}{\rho	_{1,4} \rho _{3,2}} \Big) \Big] =
	\Alt_{10} \Big[ \wI_{4,1}^+\Big(\frac{\rho_4}{\rho_5}, \frac{\rho _{1,2} \rho _{3,4}}{\rho	_{1,4} \rho _{3,2}} \Big)  \\
	& \quad\quad + \wI_{4,1}^+\Big(V\big( \CR(\rho_4\infty\rho_10), \CR(\rho_4\infty0\rho_5) \big), \frac{\rho _{1,2} \rho _{3,4}}{\rho	_{1,4} \rho _{3,2}} \Big)
	- \wI_{4,1}^+\Big(\frac{\rho_1}{\rho_5},
	\frac{1}{3 \cdot 3!^2} \Lambda_1 + \frac{1}{4 \cdot 4!^2} (3\Lambda_2 + \Lambda_3)
	\Big) \Big] \,.
	\end{split}
	\end{equation}

	Note now the following five-term relations
	\begin{align*}
	& \wI_{4,1}^+(V(\CR(564 | 0387), \CR(560|4987)^{-1}), \frac{\rho_{3,2}}{\rho_{1,2}})
	= \wI_{4,1}^+\Big([\CR(564 | 0387)] - [\CR(560|4987)] \\ & \quad\quad + \Big[1-\sigma_{\cyc{7,8}}\frac{\rho_{4,3}}{\rho_{4,5}}\Big] + \Big[\Big(1- \frac{\rho_{4,3}}{\rho_{4,5}}\Big)^{-1}\Big] + \Big[\Big(1-\frac{\detv{34568}\detv{56790}}{\detv{34567}\detv{56890}}\Big)^{-1}\Big], \frac{\rho_{3,2}}{\rho_{1,2}}\Big) \,,
	\end{align*}
	\begin{align*}
	& \wI_{4,1}^+(V(\CR(453 | 0276), \CR(450|3976)^{-1}), \frac{\rho_{1,2}}{\rho_{1}})
	= \wI_{4,1}^+\Big([\CR(453 | 0276)] - [\CR(450|3976)] \\ & \quad\quad + \Big[1-\sigma_{\cyc{6,7}}\frac{\rho_{3,2}}{\rho_{3,4}}\Big] + \Big[\Big(1- \frac{\rho_{3,2}}{\rho_{3,4}}\Big)^{-1}\Big] + \Big[\Big(1-\frac{\detv{23457}\detv{45690}}{\detv{23456}\detv{45790}}\Big)^{-1}\Big], \frac{\rho_{1,2}}{\rho_{1}}\Big) \,.
	\end{align*}
	In the former the last term is invariant under \( \cyc{3,4} \) so vanishes after skew-symmetrization, in the latter it is invariant under \( \cyc{2,3} \) and so vanishes also. In the latter, the term \( [\CR(453|0276)] \) also vanishes due to invariance under \( \cyc{3,4} \).  From this we obtain
	\begin{equation}\label{eqn:sum1comb2}
	\begin{split}
	& \Alt_{10} \Big[ -\frac{4}{3} \wI_{4,1}^+\Big(\frac{\rho _{3,2}}{\rho _{3,4}},\frac{\rho_{1,2}}{\rho _1}\Big) +2 \wI_{4,1}^+\Big(\frac{\rho _{4,3}}{\rho _{4,5}},\frac{\rho _{2,3}}{\rho_{2,1}}\Big) \Big] \\
	&= \Alt_{10} \Big[
	\wI_{4,1}^+(V(\CR(564 | 0387), \CR(560|4987)^{-1}) , \frac{\rho_{3,2}}{\rho_{1,2}})\\
	& \quad\quad\quad - \frac{2}{3} \wI_{4,1}^+\Big(V(\CR(453 | 0276), \CR(450|3976)^{-1}), \frac{\rho_{1,2}}{\rho_{1}}\Big) \\
	& \quad\quad\quad - [\CR(564 | 0387)] + [\CR(560|4987)]
	- \tfrac{2}{3} [\CR(450|3976)]
	\Big]\,.
	\end{split}
	\end{equation}

	From \( \mathcal{T} \), we again obtain
	\begin{equation}\label{eqn:sum1comb3}
	\begin{split}
	& \Alt_{10} \Big[ \wI_{4,1}^+\Big( \frac{\rho_4}{\rho_5}, \frac{\rho _{1,2} \rho _{3,4}}{\rho_{1,4} \rho _{3,2}}\Big) \Big] \\
	& = \Alt_{10} \Big[ \frac{1}{3} \wI_{4,1}^+\Big(\frac{\rho_4}{\rho_5}, \frac{1}{3!^2} \Lambda_1
	+3 [\R_3(14|259,370)]
	+3 [\R_3(54|179,260)]
	\\
	&
	\quad\quad\quad\quad\quad +[\R_3(14|257,369)]
	-[\R_3(14|259,376)]
	+[\R_3(94|150,276)]
	\\&
	\quad\quad\quad\quad\quad +[\R_3(94|157,260)]
	+[\CR(154|2769)]
	-[\CR(194 |2570)] \Big) \Big] \,.
	\end{split}
	\end{equation}

	We can substitute \eqref{eqn:sum1comb1}, \eqref{eqn:sum1comb2} and \eqref{eqn:sum1comb3} into the original combination \( \Omega \), and rewrite the remaining arguments in terms of cross-ratios and triple-ratios using
	\begin{alignat*}{2}
	\frac{\rho_1}{\rho_2} &= \CR(234 | 1590) \,, &
	\frac{\rho_4}{\rho_5} &= \CR(567 | 4890) \,, \\
	\frac{\rho_{2,3}}{\rho_{2,1}} &= \R_3(34 | 520 , 619) \,, & \quad
	\frac{\rho_{3,2}}{\rho_{3,4}} &= \R_3(45 | 360 , 279) \,.
	\end{alignat*}
	Moreover, we can put the second argument into a canonical form, namely \( \CR(123|4567) \) or \( \R_3(12|345,678) \) respectively, by choosing the inverse of the permutation which maps \( \{1,\ldots,10\} \) to the points which appear in the second argument, and then the complementary points in order of index.   Drop, for simplicity, the functional equations appearing in \eqref{eqn:sum1comb1}, \eqref{eqn:sum1comb2} and \eqref{eqn:sum1comb3}.  Note also that
	\begin{align*}
		& \wI_{4,1}^+(\CR(346|1279), \R_3^-(12|345,678)) \\
		&{}\:\:=\:\: \wI_{4,1}^+(\CR(346|1279), \R_3(12|345,678)) - \wI_{4,1}^+(\CR(346|1279), \R_3(12|678,345)) \\
		&{}\:\:=\:\: \wI_{4,1}^+(\CR(346|1279), \R_3(12|345,678)) - \sigma_{\cyc{3,6}\cyc{5,7}\cyc{4,8}} \wI_{4,1}^+(\CR(368|1259), \R_3(12|354,687)) \\
		&{}\modaltt \wI_{4,1}^+([\CR(346|1279)] + [\CR(368|1259)], \R_3(12|345,678)) \,.
	\end{align*}
	So we find that \( \Omega \) reduces to
	\begin{equation}\label{eqn:omegacrtr1}
	\begin{split}
	\tfrac{1}{3} \wI_{4,1}^+(& [\CR(256|3970)] +2 [\CR(356|2798)] + [\CR(569|2730)],\CR(123|4567)) \\[1ex]
	{} + \wI_{4,1}^+(
	& {-}2[\CR(136|2958)]
	-[\CR(147|2058)]
	+[\CR(236|1590)]
	-[\CR(356|2809)]
	\\&+[\CR(479|2058)]
	+\tfrac{5}{3} [\CR(346|1279)]
	+\tfrac{5}{3} [\CR(368|1259)]
	-\tfrac{1}{3} [\CR(457|1820)]
	\\&-\tfrac{1}{3} [\CR(457|2980)]
	-\tfrac{1}{3} [\CR(478|1520)]
	-\tfrac{1}{3} [\CR(478|2950)],\R_3(12|345,678)) \,.
	\end{split}
	\end{equation}
	Under the automorphisms of \( g = \R_3(12|345,678) \) including inverting, and the six-fold symmetries, we note the following equalities
	\begin{align*}
	\sigma_{\cyc{3,4}\cyc{6,7}\cyc{9,0}} \wI_{4,1}^+(\CR(136|2958),g) &= \wI_{4,1}^+(\CR(147|2058), g) \,, \\
	\sigma_{\cyc{3,4,5}\cyc{6,7,8}\cyc{9,0}} \wI_{4,1}^+(\CR(346|1279),g) &= \wI_{4,1}^+(\CR(457|1280), g) = -\wI_{4,1}^+(\CR(457|1820), g) \,, \\
	\sigma_{\cyc{3,4}\cyc{6,7}} \wI_{4,1}^+(\CR(356|2809),g) &= \wI_{4,1}^+(\CR(457|2809), g) = \wI_{4,1}^+(\CR(457|2980),g) \,, \\
	\sigma_{\cyc{3,4}\cyc{6,7}\cyc{9,0}} \wI_{4,1}^+(\CR(368|1259),g) &= \wI_{4,1}^+(\CR(478|1250), g) = - \wI_{4,1}^+(\CR(478|1520), g) \,.
	\end{align*}
	We also note, under the automorphisms of \( p = \CR(123|4567) \), that
	\[
	\sigma_{\cyc{2,3}\cyc{8,0}} \wI_{4,1}^+(\CR(256|3970), p) = \wI_{4,1}^+(\CR(356|2978), p) = -\wI_{4,1}^+(\CR(356|2798), p) \,.
	\]
	So the above combination \eqref{eqn:omegacrtr1} is \( \Alt_{10} \)-equivalent to
	\begin{equation}\label{eqn:omegacrtr2}
	\begin{split}
	\tfrac{1}{3} \wI_{4,1}^+(& -[\CR(256|3970)] + [\CR(569|2730)],\CR(123|4567)) \\[1ex]
	{} + \wI_{4,1}^+(
	&\tfrac{4}{3} [\CR(346|1279)]
	+\tfrac{4}{3} [\CR(368|1259)]
	-\tfrac{4}{3} [\CR(356|2809)]
	-\tfrac{1}{3} [\CR(478|2950)] \\
	& -[\CR(136|2958)]
	+[\CR(236|1590)]
	+[\CR(479|2058)],\R_3(12|345,678)) \,.
	\end{split}
	\end{equation}

	We focus first on the (cross-ratio, cross-ratio) terms in \eqref{eqn:omegacrtr2}.  Consider the five-term
	\begin{align*}
	& \wI_{4,1}^+(V([256|7390], [567|3209]), [123|4567]) \\
	& = \wI_{4,1}^+([256|3790] + [567|3209] + [356|2970] + [569|3270] + [560|3729], [123|4567]) \,.
	\end{align*}
	The second term is invariant under \( \cyc{6,7} \) since it maps the second argument to its inverse; this term vanishes after skew-symmetrization.  Note that
	\begin{align*}
	\sigma_{\cyc{2,3}} \wI_{4,1}^+([562|3790],[123|4567]) &= \wI_{4,1}^+([563|2790],[123|4567]) = -\wI_{4,1}^+([563|2970],[123|4567]) \,, \\
	\sigma_{\cyc{0,9}} \wI_{4,1}^+([569|3270],[123|4567]) &= \wI_{4,1}^+([560|3279],[123|4567]) = -\wI_{4,1}^+([560|3729],[123|4567])  \,,
	\end{align*}
	so after skew-symmetrization the first and third, and fourth and fifth terms combine to give
	\begin{equation}\label{eqn:omegacrcr}
	\begin{split}
	& \frac{1}{6} \wI_{4,1}^+(V([256|7390], [567|3209]), [123|4567]) \\
	&= \frac{1}{3} \wI_{4,1}^+([256|3790] + [569|3270], [123|4567]) \\
	&= \frac{1}{3} \wI_{4,1}^+(-[256|3970] + [569|2730], [123|4567]) \,.
	\end{split}
	\end{equation}

	This leaves only the following (cross-ratio, triple-ratio) terms in \eqref{eqn:omegacrtr2} to reduce.  Unfortunately, the reduction here relies on finding a suitable decomposition purely with computer assistance.  Introduce the following combination
	\begin{alignat*}{7}
		\Psi = {} && 2 \, [134|2569]&&{}+{}\, [134|2590]&&{}-{}2 \, [134|2689]&&{}+{}\, [134|2890]&&{}+{}\, [134|5690]&&{}-{}\, [134|6890]
\\ && {}+{}24 \, [136|2479]&&{}+{}20 \, [136|2490]&&{}-{}4 \, [136|2790]&&{}-{}2 \, [137|2459]&&{}-{}2 \, [137|2489]&&	{}+{}2 \, [137|2569]
\\ && {}+{}2 \, [137|2590]&&{}-{}2 \, [137|2689]&&{}+{}2 \, [137|2890]&&{}-{}\, [137|4590]&&	{}-{}\, [137|4890]&&{}+{}\, [137|5690]
\\ && {}-{}\, [137|6890]&&{}-{}6 \, [139|2460]&&{}+{}6 \, [139|2670]&&{}-{}2 \, [167|2359]&&{}-{}2 \, [167|2389]&&{}+{}\, [167|2590]
\\ && {}+{}\, [167|2890]&&{}-{}\, [167|3590]&&{}-{}\, [167|3890]&&{}+{}6 \, [169|2340]&&{}+{}6 \, [169|2370]&&{}+{}5 \, [346|1259]
\\ && {}-{}18 \, [346|1279]&&{}+{}5 \, [346|1289]&&{}-{}8 \, [346|1290]&&{}+{}4 \, [346|1579]&&{}-{}3 \, [346|1590]&&{}-{}4 \, [346|1789]
\\ && {}+{}38 \, [346|1790]&&{}-{}3 \, [346|1890]&&{}-{}5 \, [349|1260]&&{}-{}18 \, [367|1249]&&{}+{}5 \, [367|1259]&&{}+{}5 \, [367|1289]
\\ && {}-{}8 \, [367|1290]&&{}-{}4 \, [367|1459]&&{}-{}4 \, [367|1489]&&{}+{}14 \, [367|1490]&&{}-{}3 \, [367|1590]&&{}-{}3 \, [367|1890]
\\ && {}-{}14 \, [369|1240]&&{}-{}14 \, [369|1270]&&{}+{}24 \, [369|1470]&&{}-{}5 \, [379|1240]&&{}-{}5 \, [379|1260]&&{}+{}5 \, [780|1259] \mathrlap{\,.}
	\end{alignat*}
	Denote the (cross-ratio, triple-ratio) terms in \eqref{eqn:omegacrtr2} by
	\begin{align*}
		\Omega' =  \wI_{4,1}^+(
		&\tfrac{4}{3} [\CR(346|1279)]
		+\tfrac{4}{3} [\CR(368|1259)]
		-\tfrac{4}{3} [\CR(356|2809)]
		-\tfrac{1}{3} [\CR(478|2950)] \\
		& -[\CR(136|2958)]
		+[\CR(236|1590)]
		+[\CR(479|2058)],\R_3(12|345,678)) \,.
	\end{align*}
	Then one can check that
	\begin{equation}\label{eqn:omegali2}
		\Omega' - \frac{1}{24} \wI_{4,1}^+(\Psi, \R_3(12|345,678))
	\end{equation}
	is a \( \Lic_2 \)-functional equation in the first arguments, under automorphisms and inversion of the triple-ratio \( \R_3(12|345,678) \).  In particular it will be expressible as a combination of five-term relations.  One can also check after permuting the points so that the first argument is \( \CR(123|4567) \), that
	\begin{equation}\label{eqn:omegali3}
		\Omega' + \frac{1}{24} \wI_{4,1}^+(\Psi, \R_3(12|345,678))
	\end{equation}
	 is a \( \Lic_3 \)-functional equation  in the second argument, under automorphisms of the cross-ratio \( \CR(123|4567) \) and the 6-fold symmetries.  From the sum of \eqref{eqn:omegali2} and \eqref{eqn:omegali3}, we conclude that \( \Omega' \) decomposes into \( I_{4,1}^+ \) combinations of purely \( \Lic_2 \) functional equations, and purely \( \Lic_3 \) functional equations, in the first and second argument, respectively.
	 This completes the decomposition of \( \Omega \) into such functional equations, and hence establishes the claim.
\end{proof}

\appendix
\medskip
\section{\texorpdfstring{An explicit expression for $\symsix(x,y)$ and $\fiveterm(z;x,y)$ in terms of $\Lic_4$}{An explicit expression for Sym\textunderscore{}36(x,y) and V(z;x,y) in terms of Li\textunderscore{}4}}
\label{app:i31}
For the sake of completeness we give explicitly the combination of \( \Lic_4 \) terms appearing on the
right-hand side of
\begin{equation}
	\II_{3,1}(x,y) - \wI_{3,1}(x,y) \modsh \sum_j \lambda_j \Lic_4(f_j(x,y)) \,,
\end{equation}
which we denoted by \( \symsix(x,y) \).  The combination can be obtained by applying Theorem \ref{thm:i31syms} to relate every \( \II_{3,1}(x^\sigma,y^\pi)) \) in \( \wI_{3,1}(x,y) \) back to \( \sgn(\sigma)\sgn(\pi) \II_{3,1}(x,y) \).  The resulting expression is as follows.
\begin{align*}
& \II_{3,1}(x,y) - \wI_{3,1}(x,y) \modsh \\
&-\frac{1}{12} \Lic_4\Big(\frac{(1-x) y^2}{x^2 (1-y)}\Big)
+\frac{1}{12} \Lic_4\Big(\frac{x^2 y}{(1-x) (1-y)^2}\Big)
+\frac{1}{12} \Lic_4\Big(\frac{x y^2}{(1-x)^2 (1-y)}\Big)
\\&
+\frac{1}{6} \Lic_4\Big(\frac{(1-x) x y^2}{y-1}\Big)
-\frac{1}{4} \Lic_4\Big(\frac{-x}{(1-x) (1-y)}\Big)
+\Lic_4\Big(\frac{1-y}{1-x}\Big)
+\frac{3}{4} \Lic_4\Big(\frac{x (1-y)}{x-1}\Big)
\\&
-\frac{1}{2} \Lic_4((1-x) y)
-\frac{1}{2} \Lic_4\Big(\frac{y}{1-x}\Big)
-\frac{3}{2} \Lic_4\Big(\frac{y}{x}\Big)
+\frac{1}{4} \Lic_4\Big(\frac{(x-1) y}{x}\Big)
-\frac{1}{2} \Lic_4(x y)
\\&
-\frac{1}{4} \Lic_4\Big(\frac{x y}{x-1}\Big)
+\frac{1}{4} \Lic_4\Big(\frac{(1-x)^2 y}{x (1-y)^2}\Big)
-\frac{1}{4} \Lic_4\Big(\frac{-y}{(1-x) (1-y)}\Big)
-\frac{5}{4} \Lic_4\Big(\frac{(1-x) y}{y-1}\Big)
\\&
+\frac{1}{4} \Lic_4\Big(\frac{y}{x (-1+y)}\Big)
-\Lic_4\Big(\frac{(1-x) y}{x (1-y)}\Big)
-\frac{3}{4} \Lic_4\Big(\frac{x y}{y-1}\Big)
-\frac{1}{2} \Lic_4\Big(\frac{x y}{(1-x) (1-y)}\Big)
\\&
+\frac{1}{12} \Lic_4\Big(\frac{(1-y) y}{(1-x) x}\Big)
-\Lic_4\Big(\frac{1}{1-x}\Big)
+\Lic_4(x)-\frac{1}{2} \Lic_4\Big(\frac{x}{x-1}\Big)
+\Lic_4\Big(\frac{1}{1-y}\Big)
+\frac{3}{2} \Lic_4\Big(\frac{y}{y-1}\Big) \,.
\end{align*}\pagebreak[1]

We also give explicitly the combination of \( \Lic_4 \) terms appearing on the
right-hand side of

\begin{equation} \label{eq:i31fiveterm2}
\wI_{3,1}\Big(z,[x]+[y]+\Big[\frac{1-x}{1-xy}\Big]+[1-xy]
+\Big[\frac{1-y}{1-xy}\Big]\Big)
\;\modsh\; \sum_{j} \nu_j\Lic_4(f_j(x,y,z)),
\end{equation}
which we denoted by \( \fiveterm(z; x, y) \).
The expression we give is only slightly different from
the one given in~\cite{Ga} in that we give a relation only
for the $36$-fold symmetrization of $I_{3,1}$. We write the
identity in the following symmetric form.
Choose $z_1,\dots,z_9\in\mathbb{P}^1(\CC)$ in such a way that
$z=\CR(z_1,z_2,z_3,z_4)$, $x=\CR(z_5,z_6,z_7,z_8)$,
$y=\CR(z_5,z_6,z_8,z_9)$, for example, we can
take $(z_1,\dots,z_9)=(\infty,0,1,z,1-x,0,1-\frac{1}{y},1,\infty)$.
Then the left-hand side of~\eqref{eq:i31fiveterm2} is skew-symmetric
under the action of $\mathfrak{S}_4\times \mathfrak{S}_5$ on the $9$ points
$z_1,\dots,z_9$. Thus we can decompose the $\Lic_4$ terms
into orbits under the action of $\mathfrak{S}_4\times \mathfrak{S}_5$.
The resulting expression is as follows.  Note that we write \( (abcd) = \CR(abcd) \) as shorthand for the individual cross-ratio in the \( \Lic_4 \) arguments, to differentiate them from the notation for formal linear combinations elsewhere.
	\begin{align*}
	\quad\frac{4}{3}\Alt_{\mathfrak{S}_4\times \mathfrak{S}_5}
	&\wI_{3,1}([1234],[5678]-[5679]+[5689]-[5789]+[6789])
	\\
	\modsh \Alt_{\mathfrak{S}_4\times \mathfrak{S}_5} \bigg[
	-&\Lic_4\left(-\frac{(1234)(5768)((7659)-(1234))}
	{(5968)^2 ((8659)-(1234))^2}\right)
	+2\Lic_4\left(\frac{(5978)((8659)-(1234))}
	{(7659)-(1234)}\right)\\
	-2&\Lic_4\left(\frac{(1324)((7659)-(1234))}
	{(5876)(5798)^2}\right)
	+2\Lic_4\left(\frac{(5987)((7659)-(1234))}
	{(1324)((8659)-(1234))}\right)\\
	+3&\Lic_4\left(\frac{(5968)(7689)((7659)-(1234))}
	{(1234)(1324)}\right)
	+4\Lic_4\left(\frac{(5986)((7659)-(1234))}
	{(8659)-(1234)}\right)\\
	+4&\Lic_4\left(\frac{(5896)}
	{(1324)(5678)}\right)
	-6\Lic_4\left(\frac{(7659)-(1234)}
	{(1324)(7689)}\right)
	+8\Lic_4\left(-\frac{(7659)-(1234)}
	{(1324)(7569)(8679)}\right)\bigg]\,.
	\end{align*}

\section{\texorpdfstring{Explicit expressions for symmetries of \( I_{4,1}^+(x, y) \) in terms of \( \Lic_5 \)}{Explicit expressions for symmetries of I\textunderscore{}\{4,1\}\textasciicircum{}+(x, y) in terms of Li\textunderscore{}5}}\label{app:i41}

Recall the function
\[
	\II_{4,1}^+(x,y) \coloneqq \frac{1}{2} \big( \II_{4,1}(x,y) + \II_{4,1}(x, y^{-1}) \big) \,.
\]
Modulo products, and \emph{explicit} \( \Lic_5 \) terms, it satisfies the \( \Lic_2 \) anharmonic symmetries in \( x \), and the \( \Lic_3 \) inversion in \( y \).  It also satisfies the \( \Lic_3 \) three-term relation (including constant term) \( \Lic_3(y) + \Lic_3(1-y) + \Lic_3(1-y^{-1}) \modsh \Lic_3(1) \) in \( y \).  Explicitly, we have the following identities.

\begin{theorem} The function \( I_{4,1}^+(x,y) \) satisfies the following symmetries and identities.

	\begin{enumerate}
\item[(i)] We have \(
\II_{4,1}^+(x,y) - \II_{4,1}^+(x,y^{-1}) = 0 \,.
\)

\item[(ii)] Modulo products the combination \( \II_{4,1}^+(x,y) + \II_{4,1}^+(x^{-1},y)  \) is equal to \begin{equation*}
-2 \Lic_5\Big(\frac{y}{x}\Big)-2 \Lic_5(x y)-\Lic_5(x)-\Lic_5(y) \,.
\end{equation*}

\item[(iii)] Modulo products the combination \( \II_{4,1}^+(x,y) + \II_{4,1}^+(1-x,y) \) is equal to  \begin{align*}
	& \frac{1}{12} \Lic_5\Big(\frac{x^2 y}{(1-x) (1-y)^2}\Big)
	+\frac{1}{12} \Lic_5\Big(\frac{(1-x)^2 y}{x (1-y)^2}\Big)
	+\frac{1}{6} \Lic_5\Big(\frac{(1-x) x y^2}{y-1}\Big)
	+\frac{1}{6} \Lic_5\Big(\frac{(1-y) y}{(1-x) x}\Big)
	\\&
	-\frac{1}{2} \Lic_5\Big(\frac{1-x}{x (y-1)}\Big)
	-\frac{1}{2} \Lic_5\Big(\frac{x y}{(1-x) (1-y)}\Big)
	-\frac{1}{2} \Lic_5\Big(\frac{(1-x) (1-y)}{-x}\Big)
	-\frac{1}{2} \Lic_5\Big(\frac{(1-x) y}{x (1-y)}\Big)
	\\&
	-\frac{7}{4} \Lic_5\Big(\frac{y}{1-x}\Big)
	-\frac{7}{4} \Lic_5((1-x) y)
	-\frac{7}{4} \Lic_5\Big(\frac{y}{x}\Big)
	-\frac{7}{4} \Lic_5(x y)
	\\&
	-\Lic_5\Big(\frac{1-y}{x}\Big)
	-\Lic_5\Big(\frac{1-x}{1-y}\Big)
	-\Lic_5\Big(\frac{(1-x) y}{y-1}\Big)
	-\Lic_5\Big(\frac{x y}{y-1}\Big)
	\\&
	+\frac{1}{2} \Lic_5(1-x)
	+\frac{1}{2} \Lic_5\Big(\frac{1}{x}\Big)
	+\Lic_5\Big(\frac{x-1}{x}\Big)
	+\Lic_5\Big(\frac{1}{1-y}\Big)
	+\Lic_5\Big(\frac{y}{y-1}\Big) \,.
\end{align*}

\item[(iv)] Modulo products the combination \(  \II_{4,1}^+(x,y) + \II_{4,1}^+(x,1-y)+ \II_{4,1}^+(x,1-y^{-1})- \II_{4,1}^+(x,1) \) is equal~to \begin{align*}
	& -\frac{1}{18} \Lic_5\Big(\frac{(1-x) y^2}{x^2 (1-y)}\Big)
	-\frac{1}{18} \Lic_5\Big(\frac{x^2 y}{(1-x) (1-y)^2}\Big)
	-\frac{1}{18} \Lic_5\Big(\frac{x^2 (1-y) y}{x-1}\Big)
	\\&
	+\frac{1}{36} \Lic_5\Big(\frac{x y^2}{(1-x)^2 (1-y)}\Big)
	+\frac{1}{36} \Lic_5\Big(\frac{(1-x)^2 (y-1) y}{x}\Big)
	+\frac{1}{36} \Lic_5\Big(\frac{(1-x)^2 y}{x (1-y)^2}\Big)
	\\&
	+\frac{1}{9} \Lic_5\Big(\frac{(x-1) x y^2}{1-y}\Big)
	+\frac{1}{9} \Lic_5\Big(\frac{(1-y) y}{(1-x) x}\Big)
	+\frac{1}{9} \Lic_5\Big(\frac{y}{(x-1) x (1-y)^2}\Big)
	\\&
	-\frac{1}{2} \Lic_5\Big(\frac{1}{(1-x) (1-y)}\Big)
	-\frac{1}{2} \Lic_5\Big(\frac{1-y}{1-x}\Big)
	-\frac{1}{2} \Lic_5\Big(\frac{y}{1-x}\Big)
	\\&
	-\frac{1}{2} \Lic_5\Big(-\frac{y}{(1-x) (1-y)}\Big)
	-\frac{1}{2} \Lic_5\Big(\frac{(1-x) y}{y-1}\Big)
	-\frac{1}{2} \Lic_5((1-x) y)
	\\&
	-\frac{5}{4} \Lic_5\Big(\frac{x}{1-y}\Big)
	-\frac{5}{4} \Lic_5(x (1-y))
	-\frac{5}{4} \Lic_5\Big(\frac{y}{x}\Big)
	-\frac{5}{4} \Lic_5(x y)
	\\&
	-\frac{5}{4} \Lic_5\Big(\frac{y}{x (y-1)}\Big)
	-\frac{5}{4} \Lic_5\Big(\frac{x y}{y-1}\Big)
	+\Lic_5\Big(\frac{1}{1-x}\Big)
	+\frac{3 \Lic_5(x)}{2} \,.
\end{align*}
\end{enumerate}
\end{theorem}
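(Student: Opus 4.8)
All four identities are assertions about the mod-products symbol \( \mathcal{S}^{\shuffle} \) (equivalently, identities in the weight-\( 5 \) Lie coalgebra), so the plan is to compute \( \mathcal{S}^{\shuffle} \) of each side and verify equality. Part~(i) needs nothing: \( \II_{4,1}^+(x,y) \) is invariant under \( y\mapsto y^{-1} \) by construction, so that identity holds on the nose.

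The first step is to derive, exactly as \eqref{eq:i31cobracketa} and \eqref{eq:i21cobracketa} were obtained (by iterating the \( (n-1,1) \)-part of the coproduct \eqref{eq:coprod}), a symbol recursion expressing \( \mathcal{S}^{\shuffle}\II_{4,1}(x,y) \) through \( \mathcal{S}^{\shuffle}\II_{3,1}(x,y)\otimes\frac{y}{x} \) and the weight-four tensors \( \mathcal{S}^{\shuffle}\Lic_4(x)\otimes(1-y^{-1}) \), \( \mathcal{S}^{\shuffle}\Lic_4(x/y)\otimes(1-y^{-1}) \), \( \mathcal{S}^{\shuffle}\Lic_4(y)\otimes(1-x^{-1}) \), \( \mathcal{S}^{\shuffle}\Lic_4(y/x)\otimes(1-x^{-1}) \), with signs dictated by the same computation. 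Averaging over \( y\leftrightarrow y^{-1} \), and using \( \mathcal{S}^{\shuffle}\Lic_k(z)=\mathcal{S}^{\shuffle}\Lic_{k-1}(z)\otimes z \) for \( k\ge 3 \) together with \( \mathcal{S}^{\shuffle}\Lic_2(z)=z\wedge(1-z) \), then gives a closed expression for \( \mathcal{S}^{\shuffle}\II_{4,1}^+(x,y) \) in terms of \( \mathcal{S}^{\shuffle}\II_{3,1}(x,y) \), \( \mathcal{S}^{\shuffle}\II_{3,1}(x,y^{-1}) \) and one-variable weight-four polylogarithm symbols.

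For (ii) and (iii) I would substitute this expression into the left-hand side and feed in two ingredients: Theorem~\ref{thm:i31syms}(i),(ii), which gives \( \II_{3,1}(x,y)+\II_{3,1}(1-x,y) \) and \( \II_{3,1}(x,y)+\II_{3,1}(x^{-1},y) \) modulo products and explicit \( \Lic_4 \) terms; and the standard anharmonic symmetry identities for \( \mathcal{S}^{\shuffle}\Lic_4 \) (inversion \( \Lic_4(z)+\Lic_4(z^{-1}) \), reflection \( \Lic_4(z)+\Lic_4(1-z)+\Lic_4(1-z^{-1}) \)). Collecting the resulting tensors and repeatedly applying five-term and anharmonic relations to the \( \Lic_4 \)-arguments that appear normalises the answer to the \( \Lic_5 \) combination in the statement; because of \( \mathcal{S}^{\shuffle}\Lic_5(z)=\mathcal{S}^{\shuffle}\Lic_4(z)\otimes z \), the weight-five match is essentially a bookkeeping consequence of the underlying weight-four identities, up to terms that correct the first four tensor slots. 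For (iv) the same scheme applies once one has the trilogarithm three-term behaviour of \( \II_{3,1} \) in its \emph{second} argument, that is, an expression for \( \II_{3,1}(x,y)+\II_{3,1}(x,1-y)+\II_{3,1}(x,1-y^{-1})-\II_{3,1}(x,1) \) modulo products and explicit \( \Lic_4 \); I would get this from Theorem~\ref{thm:i31syms} by using the swap relation~(iii) there to move the second variable into the first slot and then applying~(i) and~(ii), after which substitution into the recursion for \( \mathcal{S}^{\shuffle}\II_{4,1}^+ \) and simplification produces~(iv).

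The hard part is purely computational rather than conceptual: the intermediate expressions and the final \( \Lic_5 \) answers contain many terms, and matching them requires careful tracking of tensors and repeated use of \( \Lic_2 \) and \( \Lic_3 \) functional equations to bring the \( \Lic_4 \)- and \( \Lic_5 \)-arguments to the particular normal forms listed in the statement (which are only one representative of the relevant coset modulo products). In practice the coincidence of the two mod-products symbols in each of (ii)--(iv) is confirmed by a direct symbolic computation based on the recursions above, and the explicit \( \Lic_5 \) combinations recorded in the theorem are the output of that reduction.
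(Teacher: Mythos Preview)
Your overall plan --- verify each identity at the level of the mod-products symbol --- is exactly what the paper does, and (i) is indeed immediate from the definition. For (ii)--(iv) the paper takes a different route from your recursive bootstrap: rather than deducing them from the weight-$4$ identities of Theorem~\ref{thm:i31syms} via the coproduct recursion, it cites pre-existing results from~\cite{Ch}. Specifically, (ii) is the inversion/parity property for $I_{a,b}(x^{-1},y^{-1})$ (see also~\cite{Pa}); (iii) is the case $a=1$, $b=0$ of the reduction of $I_{4,1}^+$ under the algebraic $\Lic_2$ functional equation attached to the roots of $x^a(1-x)^b=t$; and (iv) is taken from a theorem in~\cite{Ch} stated for the companion function $I_{4,1}^-$. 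Each is ultimately confirmed by a direct symbol computation, as you also propose, so at that final level the two approaches coincide.

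There is, however, a genuine gap in your scaffolding for (iv). You claim that
\[
I_{3,1}(x,y)+I_{3,1}(x,1-y)+I_{3,1}(x,1-y^{-1})-I_{3,1}(x,1)
\]
reduces to explicit $\Lic_4$'s via the swap relation (iii) and the anharmonic relations (i),(ii) of Theorem~\ref{thm:i31syms}. This is false: under the $\mathfrak{S}_3$-action the elements $y,\,1-y,\,1-y^{-1}$ carry signs $+1,-1,+1$, so after swap and reduction the $\wI_{3,1}$-parts combine to a single residual $\wI_{3,1}(x,y)$ rather than cancelling. Equivalently, the $(2,2)$-part of the cobracket of that $I_{3,1}$-combination is $\{x\}_2\wedge\{y\}_2\ne 0$, so it is genuinely depth~$2$. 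The second argument of $I_{3,1}$ behaves like $\Lic_2$, not $\Lic_3$, and there is no three-term relation there to exploit. Your fallback to direct symbolic verification still certifies (iv), but the conceptual bootstrap you describe for it does not close: the $\Lic_3$-behaviour in $y$ is a feature of $I_{4,1}^+$ itself and is not inherited from the $I_{3,1}$ level in the way you suggest.
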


\begin{proof}  Each identity is checked directly on the level of the mod-products symbol.

	The identity in (i) is immediate from the definition of \( \II_{4,1}^+ \).  The identity in (ii) follows from the inversion property of \( \II_{a,b}(x^{-1},y^{-1}) \) given in Theorem 6.1.2 of \cite{Ch}
    (see also~\cite{Pa} for a more general version of the inversion property).

	The identity in (iii) can be obtained from the case \( a = 1, b = 0 \) of the reduction of \( I_{4,1}^+ \) under the so-called algebraic \( \Lic_2 \) functional equation \( \sum_i \Lic_2(p_i(t)) = 0 \) where \( p_i(t) \) are the roots counted with multiplicity of \( x^a(1-x)^b = t \).  This is given in Theorem 7.4.6 of \cite{Ch} for the related function \( I_{4,1}^-(x,y) \) and in Corollary 7.4.9 of \cite{Ch} for \( I_{4,1}^+(x,y) \) itself.

	The identity in (iv) can be obtained from Theorem 7.4.17 in \cite{Ch} where it is stated for the related function \( \II_{4,1}^-(x,y) \).  Note that the constant term is written using the Nielsen polylogarithm \( S_{3,2} \) instead of \( \II_{4,1}^{\pm} \) with one argument specialized to $1$, but they are related via \( S_{3,2}(x) \modsh \II_{4,1}(x,1) + 4\Lic_5(x) \).
\end{proof}

\end{document}